\newcommand{\calf}{{\mathcal F}}
\newcommand{\catA}[1]{{\mathfrak A}}
\newcommand{\catI}[1]{{\mathfrak I}}
\newcommand{\catS}[1]{{\mathfrak S}}
\newcommand{\qsigma}{{\mathcal Q}_{\Sigma}}
\newcommand{\calq}{\mathcal{Q}}
\newcommand{\p}[1]{{\mathbb{P}^{#1}}}
\newcommand{\pn}{{\mathbb{P}^n}}
\newcommand{\bp}[1]{\widetilde{\mathbb{P}^{#1}}}
\newcommand{\dimg}{\pi_{\ast}}
\newcommand{\pull}{\pi^{\ast}}
\DeclareMathOperator{\codim}{{\rm codim}}
\DeclareMathOperator{\Det}{{\rm det}}
\DeclareMathOperator{\ext}{{\rm Ext}}
\DeclareMathOperator{\Hom}{{\rm Hom}}
\DeclareMathOperator{\phom}{\mathbb{P}{\rm Hom}}
\DeclareMathOperator{\ph}{\mathbb{P}{\rm H}}
\DeclareMathOperator{\ho}{H}
\DeclareMathOperator{\coker}{coker}
\DeclareMathOperator{\ch}{{\rm ch}}
\DeclareMathOperator{\pic}{{\rm Pic}}
\DeclareMathOperator{\ft}{\widetilde{\mathcal{F}}}
\newtheorem{theorem}{Theorem}[section]
\newtheorem{proposition}[theorem]{Proposition}
\newtheorem{lemma}[theorem]{Lemma}
\newtheorem{corollary}[theorem]{Corollary}
\newtheorem{remark}[theorem]{Remark}
\newtheorem{example}[theorem]{Example}
\newtheorem{definition}[theorem]{{\bf Definition}}
\begin{document}

\title{Torsion free instanton sheaves on the blow-up of $\mathbb{P}^{3}$ at a point}
\footnotetext[1]{The author was partially supported by the CAPES-COFECUB $08/ 2018$ project and MATH-AMSUD project: GS$\&$MS $21$-MATH$-06$  $2021\hspace{0.1cm}\& \hspace{0.1cm}2022.$
}

\author{Abdelmoubine Amar Henni $^{\ast}$}



\vspace{1cm}

\begin{abstract}
We consider an extension of the instanton bundles definition, given by Casnati-Coskun-Genk-Malaspina, for Fano threefolds, in order to include non locally-free ones on the blow-up $\widetilde{\mathbb{P}^{3}},$ of the projective $3$-space at a point. With the proposed definition, we prove that any reflexive instanton sheaf must be locally free, and that the strictly torsion free instanton sheaves have singularities of pure dimension $1.$ We construct examples and study their $\mu$-stability. Furthermore, these sheaves will play a role in (partially) compactifying the t'Hooft component of the moduli space of instantons, on $\widetilde{\mathbb{P}^{3}}.$ Finally, examples of these are shown to be smooth and smoothable.  

\end{abstract}


\vspace{1cm}

\maketitle

\tableofcontents

\vspace{1cm}

\section{Introduction}
\label{IN}








The study of instanton bundles, and their Moduli, in the case of Fano threefolds, beside the classical, and very important, case of the projective space, started almost a decade ago by Kuznetsov \cite{kuznetsov} and Faenzi \cite{faenzi}, for Fano varieties of index $1,$ and $2.$ Since then, many examples have been studied \cite{MMJ, AM, ccgm, Sanna}, with special attension to the construction of their moduli and features that generalizes the classical instantons on $\p3,$ such as their splitting behaviour on line.
Recently, Casnati-Coskun-Genk-Malaspina, in \cite{ccgm}, proposed a more general definition of instanton bundles that unifies all the preceding ones. Moreover, in the example of the blow-up, $\bp3,$ of the projective space they give a detailed study of such bundles and their moduli. 
For $X$ a Fano threefold with anti-canonical divisor $-K_{X}$ and index $i_{X},$ let ${\rm q}_{X}:=[\frac{i_{X}}{2}],$ and denote with $h:=\frac{-K_{X}}{i_{X}}$ its fundamental divisor (See \cite{Isk1} for more details ).  

\vspace{0.4cm}

 \begin{definition}\label{definition-0}\cite[Definition 1.1]{ccgm}
 A vector bundle $\mathcal{E}$ of rank $2,$ on a  polarized variety$(X,\mathcal{O}_{X}(h))$ is called an instanton bundle if the following properties hold:
 \begin{itemize}
     \item The first Chern class of $\mathcal{E}$ is $c_{1}(\mathcal{E})=(2{\rm q}_{X}-i_{X})h;$
     \item $\mathcal{E}$ is $\mu$-semi stable, with respect to the polarization $\mathcal{O}_{X}(h),$ given the ample divisor $h;$ 
     \item The cohomology group $\ho^{1}(X,\mathcal{E}(-{\rm q}_{X}h))$ is trivial.
 \end{itemize}
 Moreover, the second chern class $c_{2}(\mathcal{E})$ will be refered to as charge of the instanton bundle $\mathcal{E}.$
 \end{definition}

\vspace{0.4cm} 
 
The definition above is also independent of the rank of the Picard group and happen to be very useful for obtaining a monad-like description of these bundles \cite{kuznetsov, faenzi,ccgm, MMJ}. Monad descriptions are, in most situations, very dependent of the knowledge of the bounded derived category of coherent sheaves $D^{\flat}(X),$ on $X,$ and conditions such as in the second item of the definition above are practical in order to obtain crucial vanishings, so that Beilinson-like spectral sequences degenerate, and consequently leading to a monad. 

The drawback of the definition above is the fact that it does not include non locally free sheaves, neither higher rank cases. In particular, a simple form of Serre duality is enough to get many of the desired vanishings.  
In the present paper we shall extend Definition \ref{definition-0}, on the blow-up $\bp3$ of the projective $3$-space, by adding more cohomology vanishing, in order to include non locally free instantons that can be used compactify the moduli space. In the case of $\p3$ instanton sheaves, such compactifications were also used to understand the geography of the Maruyama moduli space of stable rank $2$ sheaves and related constructions, as in \cite{AJT} and references therein. 
The present paper addresses the non locally free situation, that has been also generalized to blow ups of higher dimensional projective spaces, in a more recent study  \cite{henni1}.  We expect to include wider classes of varieties and tackle higher rank cases, in future works.

In the current circumstance, It also follows, from our definition, that any reflexive instanton sheaf of rank $2$ on $\bp3$ is locally free, alike the case of rank $2$ instantons on $\p3$ \cite{JG} . More precisely,

\vspace{0.4cm}
 
${\bf 1^{{\rm st}}}$\textbf{Main Result:} [Corollary \ref{ccgm-like-monad} and Theorem \ref{main1}] {\em
\begin{itemize}
    \item[(a)] Any torsion free instanton sheaf $\mathcal{F},$ on $\bp3,$ is also the cohomology of a monad: 
    \begin{equation}
0\to\begin{array}{c} \mathcal{O}_{\bp3}(-1,1)^{\oplus (l+\gamma)} \\ \oplus \\ \Omega^{1}(0,-1)^{\oplus (k-l)} \end{array} \stackrel{\alpha}{\to} 
\begin{array}{c} \mathcal{O}_{\bp3}(-1,1)^{\oplus\gamma } \\ \oplus \\ \Omega^{1} (1,-1)^{\oplus k} \\ \oplus \\ \mathcal{O}_{\bp3}(-1,0)^{\oplus 2(k-l)} \end{array} 
\stackrel{\beta}{\to}\mathcal{O}_{\bp3}^{\oplus (2k-l-r)}\to0. 
\end{equation}
In which, the map $\alpha$ fails to be injective exactly on the singularity locus of $\mathcal{F}$.
    \item[(b)] The singularity locus of a strictly torsion free instanton sheaf, on $\bp3$ has pure dimension $1.$ 
\end{itemize}
}
\vspace{0.4cm}

The property in item $({\rm b}),$ are special to rank $2$ and is a generalization of the already known phenomenon on $\p3,$ as it was shown in \cite{JG}, for all rank $2$ instanton sheaves, and in independently in \cite[Theorem 3.4]{henni}, for the special case of the fixed instantons sheaves of rank $2$, with respect to a toric action. This might hold for rank $2$ instantons on many  $3$-folds and is worth of further investigations. For higher ranks this not true and there are counter-examples already on $\p3$ \cite{JG}.

As an application we study the boundary of the component of the instanton bundles given by \cite[Construction 7.2]{ccgm}, and that we will call t'Hooft bundles:
In order to achieve the desired result, we use elementary transformations, as in \cite{JMT1, MT}, with respect to a triple $(\Sigma,\mathcal{L},\phi),$ satisfying certain conditions, where $\Sigma$ is a curve, $\mathcal{L}$ is an invertible sheaf on $\Sigma$ and $\phi$ is a surjective map. In particular, it will follow that $\mu$-(semi-)stability is preserved under these operations. We call a sheaf $\mathcal{F}$ {\em smoothable} if it can be deformed to a locally free sheaf. That is, there is a pointed scheme $(S,s_{0}),$ an open subscheme $U\owns s_{0},$ of $S,$ and a family of sheaves $\mathbb{F}$ parameterized by $S$ such that the central fiber $\mathbb{F}_{s_{0}}=\mathcal{F}$ and for all $u\in U\backslash\{s_{0}\},$ the generic fiber $\mathbb{F}_{u}$ is locally free.

Furthermore if $[F]$ is a point, representing $\mathcal{F},$ in a moduli $\mathcal{M},$ then by smoothable class $[\mathcal{F}]$ we mean that $\mathcal{F}$ is smoothable and $[\mathbb{F}_{u}]\in \mathcal{M}.$ for some $(U',s_{0})\subset (U',s_{0}).$ In the last part of this work we will prove 

\vspace{0.4cm}
${\bf 2^{{\rm nd}}}$\textbf{Main Result:} [Proposition \ref{main2}]
{\em Let $\mathcal{F}$ be the elementary transform of a t'Hooft instanton bundle, on $\bp3,$ with respect to an elementary data $(\Sigma,\mathcal{L},\phi),$ in which $\Sigma$ is a union of lines. Then its isomorphism class $[\mathcal{F}]$ is smooth and smoothable.}
\vspace{0.4cm}

In particular, the isomorphism class of such sheaves does not lie in the intersection of the t'Hooft component with another one, inside the larger moduli of all instantons. 

We observe that the latter result helps us understand only a part the boundary of the t'Hooft component. In the future, We hope to be able to understand the whole boundary and also how other components fit in the larger moduli of all instantons, on $\bp3.$ The relation with the more general moduli of $\mu$-stable sheaves, on $\bp3,$ in general is also of a high interest.







This paper is organized as the following; In section \ref{blow-up} we recall some results about $\bp3$ and set notations. Section \ref{instanton} will be dedicated to our extended definition of instanton sheaves and its consequences. In particular, we give the proof of our first main Theorem \ref{main1}. In Section \ref{elementary}, we introduce elementary transformations of instantons and study $\mu$-stability of the obtained sheaves, with respect to these operations. In particular we show the existence of torsion free instanton sheaves with the proposed Definition \ref{Instanton-def}. Finally, in Section \ref{application} we prove smothness and smoothability of elementary transformations of t'Hooft instanton bundles on $\bp3.$ 


\section{The Blow-up of the projective space at a point}\label{blow-up}

Let $\p3$ be the projective $3$-space, over the field of complex numbers $\mathbb{C},$ and fix a point $p_{0}$ in it. We will denote by $\bp3$ be the blow-up of $\p3$ at $p_{0},$ and by $\pi:\bp3\to\p3$ the blow-down map. Let $H,$ be the pull-back of the cohomology class of plane section, in $\ho^{2}(\p3,\mathbb{Z}),$ and set $E$ for the cohomology class of the exceptional divisor in $\ho^{2}(\bp3,\mathbb{Z}),$ then the Picard group of $\bp3$ can be written as ${\rm Pic}(\bp3)=H\mathbb{Z}\oplus E\mathbb{Z},$ 
and one can show that the Chow ring of $\bp3$ is isomorphic to the ring $A^{\ast}(\bp3)=\frac{\mathbb{Z}[E,H]}{<E\cdot H, E^{3}-H^{3}>}.$

For a divisor $D=a\cdot H+b\cdot E$ we associate the the line bundle $\mathcal{O}(D)=\mathcal{O}(a,b):=\mathcal{O}(a\cdot H)\otimes\mathcal{O}(b\cdot E).$ In particular, the canonical bundle, associated to the canonical divisor ${\rm K}_{\bp3}=-4H+2E,$ is denoted by $\omega_{\bp3}=\mathcal{O}(-4,2).$ We recall that the Hirzebruch-Riemann-Roch formula for coherent sheaves $\mathcal{F}$ and $\mathcal{G},$ on a smooth variety $X$ is given by $$\chi(F,G):=\sum_{i=0}^{{\rm dim}X}(-1)^{i}{\rm dim}\ext^{i}(F,G)=\int \ch^{\ast}(F)\cdot\ch(G)\cdot{\rm Td}(X),$$
where ${\rm Td}(X)$ is the Todd class of $X$ and the dual Chern character ${\rm ch}^{\ast}(F)$ is given by $$\oplus_{i}(-)^{i}{\rm ch}_{i}(F)\in\ho^{2\bullet}(X,\mathbb{Z}).$$

%
%
%
%
%
%
%
%
%
%
%
%

For a line bundle this reads $$\chi(\mathcal{O}(p,q))=\frac{1}{6}((p+1)(p+2)(p+3)+q(q-1)(q-2))$$
More generally, the Chern character  of coherent sheaf $\mathcal{F}$ can be written as 
\begin{align}\label{chern-ch}
{\rm ch}(F)=r+[aH+bE]&+\frac{1}{2}[(a^{2}-2k)H^{2}+(b^{2}-2l)E^{2}] \notag \\
&+\frac{1}{6}(a^{3}+b^{3}-3ak-3bl+3m)[H^{3}],
\end{align}
where $l,k,m$ are, respectively, integer multiples of the generators $E^{2},$ $H^{2}$ in $A^{2}(\bp3)$ and $H^{3}$ in $A^{3}(\bp3)$.
Specifically, they apear in the Chern classes of $\mathcal{F},$ see \cite{EH}, for details;
\begin{align}
c_{1}(\mathcal{F})&=a \cdot H + b \cdot E \notag  \\
c_{2}(\mathcal{F})&= k \cdot H^{2} + l \cdot E^{2}\notag  \\
c_{3}(\mathcal{F})&= m \cdot H^{3}\notag   
\end{align}
The formulas for the Chern characters and the Euler characteristic are obtained by applying standard computations, respectively, as in \cite[\S 14.2.1, \S 14.2.2]{EH} and \cite[\S 14.3]{EH}
 
%

The Grothendieck-Riemann-Roch theorem, for $\mathcal{F}(p,q):=\mathcal{F}\otimes\mathcal{O}(p,q),$ gives:

\begin{align}\label{euler}
\chi(\mathcal{F}(p,q))=r\cdot&\chi(\mathcal{O}(p,q))+\chi(\mathcal{O}(a,b))-1+\frac{1}{6}[3m-3k(a+4)-3l(b-2)]  \\ +&\frac{1}{2}[ap(p+a+4)+bq(q+b-2)-2(kp+lq)]. \notag
\end{align}

Grothendieck-Serre duality can be expressed as$^{\dagger}$ 
\begin{align}
\ext^{i}(\mathcal{F},\mathcal{G})\cong&\ext^{3-i}(\mathcal{G},\mathcal{F}\otimes\mathcal{O}(K_{\bp3}))^{\ast} \notag \\
\cong&\ext^{3-i}(\mathcal{G},\mathcal{F}(-4,2))^{\ast}
\end{align}

\footnotetext[2]{We are adopting the following notation; the dual of a sheaf (or a complex of sheaves, in general) $\mathcal{F},$  will be denoted by $\mathcal{F}^{\vee}.$ For $\mathbb{C}$-vector spaces ${\rm H}(\mathcal{F}),$ (or cohomology modules, in general) depending on a sheaf, or a complex of sheaves, in general, their dual will be denoted by  ${\rm H}(\mathcal{F})^{\ast}.$ }

%
%

We also remind the reader that $\bp3$ is a Fano threefold of index $i=2,$ degree $d=7$ and Picard number $2.$ 
Its fundamental divisor is $h:=\frac{-K_{\bp3}}{i}=2H-E,$ and satisfies \cite[Proposition 1.9]{Isk1}: $${\rm dim}\ho^{i}(\bp3,\mathcal{O}(h))=\left\{\begin{array}{cc} 9, & i=0  \\ 0, & i>0 \end{array}\right.$$


Moreover, one has $${\rm dim}\ho^{i}(\bp3,\mathcal{O}(nh))=0 \quad\forall i>0, \forall n>-2,$$ 
 
and in particular $\ho^{i}(\bp3,\mathcal{O}(-h))=0, \quad \forall i\geq0,$ by Serre duality.

\vspace{0.3cm}

\begin{proposition}\label{van1} \hspace{8cm}
\begin{itemize}
    \item[(i)] $\dimg\mathcal{O}(0,q)=\dimg\mathcal{O}(qE)=\left\{\begin{array}{cc}\mathcal{I}^{-q}_{p_{0}} & q<0 \\ & \\ \mathcal{O} & q\geq0\end{array}\right.,$ where $\mathcal{I}_{p_{0}}$ is the ideal sheaf of the blown-up point $p_{0}\in\p3;$
    \item[(ii)] $\ho^{0}(\mathcal{O}(p,q))=0,$ $\forall q$ if $p<0;$
    \item[(iii)] $\ho^{i}(\mathcal{O}(p,q))=0,$ $\forall i\geq0$ if $-3\leq p\leq-1,$ and $0\leq q\leq 2$ 
\end{itemize}
\end{proposition}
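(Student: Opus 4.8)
The plan is to push everything down along $\pi$ and reduce to cohomology on $\p3$, using that $\pi$ is an isomorphism over $\p3\setminus\{p_{0}\}$ and that $R\pi_{\ast}\mathcal{O}_{\bp3}=\mathcal{O}_{\p3}$. For (i), a local section of $\mathcal{O}(qE)$ over $U\subseteq\p3$ pulls back to a rational function on $\pi^{-1}(U)$ that is regular away from $E$; since $\pi$ restricts to an isomorphism there and $\p3$ is normal with $\{p_{0}\}$ of codimension $\geq 2$, that section lies in $\mathcal{O}_{\p3}(U)$, so $\pi_{\ast}\mathcal{O}(qE)\subseteq\mathcal{O}_{\p3}$; the reverse inclusion when $q\geq 0$ is clear because $qE$ is effective, which settles that case. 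For $q=-m<0$ one gets the extra constraint that the extended function vanish to order $\geq m$ along $E$, and since $\pi^{\ast}\mathcal{I}_{p_{0}}\cdot\mathcal{O}_{\bp3}=\mathcal{O}(-E)$ this says precisely that the function lies in $\mathcal{I}_{p_{0}}^{m}$; here one uses that at the smooth point $p_{0}$ the ordinary power $\mathcal{I}_{p_{0}}^{m}$ equals its symbolic power (is integrally closed), so that vanishing to order $\geq m$ along $E$ is not strictly weaker than membership in $\mathcal{I}_{p_{0}}^{m}$.

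For (ii), the projection formula together with (i) gives $\pi_{\ast}\mathcal{O}(p,q)=\mathcal{O}_{\p3}(p)\otimes\pi_{\ast}\mathcal{O}(qE)\subseteq\mathcal{O}_{\p3}(p)$, hence $\ho^{0}(\bp3,\mathcal{O}(p,q))\subseteq\ho^{0}(\p3,\mathcal{O}_{\p3}(p))=0$ for $p<0$. For (iii), $\ho^{0}$ vanishes by (ii) since $p\leq-1$, and by Grothendieck--Serre duality $\ho^{3}(\mathcal{O}(p,q))\cong\ho^{0}(\mathcal{O}(-4-p,\,2-q))^{\ast}$, which vanishes by (ii) because $-4-p\leq-3<0$. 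For $\ho^{1}$ and $\ho^{2}$ I would tensor the structure sequence $0\to\mathcal{O}(-E)\to\mathcal{O}\to\iota_{\ast}\mathcal{O}_{E}\to 0$ of the exceptional divisor $\iota\colon E\cong\p2\hookrightarrow\bp3$ with $\mathcal{O}(pH+qE)$; since $N_{E/\bp3}=\mathcal{O}_{\p2}(-1)$ and $H|_{E}=0$ one has $\mathcal{O}(pH+qE)|_{E}=\mathcal{O}_{\p2}(-q)$, giving
\[
0\longrightarrow\mathcal{O}(p,q-1)\longrightarrow\mathcal{O}(p,q)\longrightarrow\iota_{\ast}\mathcal{O}_{\p2}(-q)\longrightarrow 0 .
\]
For $q=1$ and $q=2$ the sheaf $\mathcal{O}_{\p2}(-q)$ has vanishing cohomology in every degree, so $\ho^{i}(\mathcal{O}(p,q))\cong\ho^{i}(\mathcal{O}(p,1))\cong\ho^{i}(\mathcal{O}(p,0))$ for all $i$; and $\ho^{i}(\bp3,\mathcal{O}(p,0))=\ho^{i}(\bp3,\pi^{\ast}\mathcal{O}_{\p3}(p))=\ho^{i}(\p3,\mathcal{O}_{\p3}(p))$ since $R\pi_{\ast}\pi^{\ast}\mathcal{O}_{\p3}(p)=\mathcal{O}_{\p3}(p)$. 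Bott's formula on $\p3$ then finishes it: $\ho^{1}=\ho^{2}=0$ for every $p$, $\ho^{0}=0$ for $p<0$, and $\ho^{3}=0$ for $p>-4$, so all $\ho^{i}(\mathcal{O}(p,q))$ vanish when $-3\leq p\leq-1$.

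The one non-formal point, and the place I expect the real work, is the exact identification $\pi_{\ast}\mathcal{O}(-mE)=\mathcal{I}_{p_{0}}^{m}$ in (i): a priori the pushforward is only sandwiched between $\mathcal{I}_{p_{0}}^{m}$ and its integral closure, and equality genuinely uses smoothness of the blown-up point. Everything else is bookkeeping with the exceptional-divisor sequences and Bott vanishing; it is worth noting that the range $0\leq q\leq 2$ in (iii) is exactly what makes $\ho^{2}(\p2,\mathcal{O}_{\p2}(-q))$ vanish, and the bound $p\geq-3$ is exactly what kills $\ho^{3}$ on $\p3$, so these hypotheses are sharp.
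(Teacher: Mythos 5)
Your proof is correct, and in parts (i) and (iii) it takes a genuinely different route from the paper's. For (i) the paper pushes forward the structure sequences $0\to\mathcal{O}(-E)\to\mathcal{O}\to\mathcal{O}_{E}\to0$ and $0\to\mathcal{O}\to\mathcal{O}(E)\to\mathcal{O}_{E}(-1)\to0$ and then inducts on $q$, identifying the successive quotients $\pi_{\ast}\mathcal{O}_{E}(-qE)$ with the graded pieces of $\mathcal{I}_{p_{0}}$; you instead argue directly with sections, normality of $\p3$, and the valuative characterization of $\mathcal{I}_{p_{0}}^{m}$ at a regular point. Both arguments rest on the same non-formal input --- regularity at $p_{0}$, which the paper encodes as $\mathfrak{m}^{m}/\mathfrak{m}^{m+1}\cong{\rm Sym}^{m}(\mathfrak{m}/\mathfrak{m}^{2})$ and you encode as integral closedness of $\mathfrak{m}^{m}$ --- and you are right to single this out as the one real step. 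Part (ii) is identical to the paper's. For (iii) the paper handles $\ho^{0}$ and $\ho^{3}$ via (ii) and Serre duality, then uses $\chi(\mathcal{O}(p,q))=0$ to equate $h^{1}$ and $h^{2}$ and kills them with the restriction sequence to $E$; your reduction of the entire complex $\ho^{\bullet}(\bp3,\mathcal{O}(p,q))$ for $q=0,1,2$ to $\ho^{\bullet}(\p3,\mathcal{O}_{\p3}(p))$ via the exceptional-divisor sequence and $R\pi_{\ast}\pull\mathcal{O}_{\p3}(p)=\mathcal{O}_{\p3}(p)$ is cleaner: it treats all cohomological degrees uniformly, covers the $q=0$ case explicitly (which the paper's final sentence glosses over), and makes visible why the ranges $-3\leq p\leq-1$ and $0\leq q\leq2$ are exactly the sharp ones. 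One small discrepancy worth noting: the paper states $\ho^{i}(\mathcal{O}_{|E}(qE))\cong\ho^{i}(\p2,\mathcal{O}_{\p2}(q-3))$, whereas the standard computation you use gives $\mathcal{O}(qE)|_{E}\cong\mathcal{O}_{\p2}(-q)$; for $q\in\{1,2\}$ both line bundles on $\p2$ are cohomologically trivial, so the conclusion is unaffected either way.
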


\vspace{0.3cm}

\begin{proof}
\begin{itemize}
    \item[(i)] By Zariski's main Theorem, one has $\dimg(\mathcal{O}_{\bp3})=\mathcal{O}_{\p3},$ $\dimg(\mathcal{O}_{E})=\mathcal{O}_{p_{0}}$ and $\mathcal{R}^{i}\dimg(\mathcal{O}_{\bp3})=\mathcal{R}^{i}\dimg(\mathcal{O}_{E})=0,$ for $i>0.$ By taking the direct images of
    \begin{align}
    0\to\mathcal{O}_{\bp3}(-E)\to\mathcal{O}_{\bp3}\to\mathcal{O}_{|E}\to0    \\
    0\to\mathcal{O}_{\bp3}\to\mathcal{O}_{\bp3}(E)\to\mathcal{O}_{|E}(-1)\to0 
    \end{align}
    One obtains
    \begin{align}
    0\to\dimg\mathcal{O}_{\bp3}(-E)\to\underbrace{\dimg\mathcal{O}_{\bp3}}_{\mathcal{O}_{\p3}}\to\underbrace{\dimg\mathcal{O}_{|E}}_{\mathcal{O}_{p_{0}}}\to\mathcal{R}^{1}\dimg\mathcal{O}_{\bp3}(-E)\to0 \label{dimg1}   \\ \notag \\
    0\to\underbrace{\dimg\mathcal{O}_{\bp3}}_{\mathcal{O}_{\p3}}\to\dimg\mathcal{O}_{\bp3}(E)\to\dimg\mathcal{O}_{|E}(-1)\to0  \label{dimg2}
    \end{align}
    
    It follows that $$\dimg\mathcal{O}_{\bp3}(-E)\cong\mathcal{I}_{p_{0}},\qquad \mathcal{R}^{1}\dimg\mathcal{O}_{\bp3}(-E)=0$$ and $$\dimg\mathcal{O}_{\bp3}(E)\cong\mathcal{O}_{\p3},\qquad \dimg\mathcal{O}_{E}(-1)=0.$$ 
    
    The rest of the result is by induction steps
    $$0\to\dimg\mathcal{O}_{\bp3}((q-1)E)\to\dimg\mathcal{O}_{\bp3}(qE)\to\dimg\mathcal{O}_{E}(-q)\to\mathcal{R}^{1}\dimg\mathcal{O}_{\bp3}((q-1)E)\to\cdots$$
    on \eqref{dimg1}, when $q<0,$ and \eqref{dimg2}, when $q\geq0.$
    

    $\ho^{0}(\mathcal{O}(p,q))=0,$ $\forall q$ if $p<0.$
    
\vspace{0.3cm}

    \item[(ii)] This follows from item ${\rm (i)}$ and the fact that \begin{align}
        \ho^{0}(\bp3,\mathcal{O}(p,q))&=\ho^{0}(\bp3,\pull\mathcal{O}_{\p3}(p)\otimes\mathcal{O}_{\bp3}(qE)) \notag \\
        &=\ho^{0}(\p3,\dimg(\pull\mathcal{O}_{\p3}(p)\otimes\mathcal{O}_{\bp3}(qE))) \notag \\
        &=\ho^{0}(\p3,\mathcal{O}_{\p3}(p)\otimes\mathcal{O}_{\bp3}(qE)); \notag
    \end{align}
    where the last line is given by the projective formula. Sine we have
    $$\ho^{0}(\p3,\mathcal{O}_{\p3}(p)\otimes\dimg\mathcal{O}_{\bp3}(q))=\left\{\begin{array}{cc}\ho^{0}(\p3,\mathcal{I}^{-q}_{p_{0}}(p))=0 &q<0  \\ \ho^{0}(\p3,\mathcal{O}_{\p3}(p))=0& q\geq0\end{array}\right.\quad \forall p<0.$$
    
\vspace{0.3cm}

    \item[(iii)] First, we note that, by item ${\rm(ii)}$ and Serre duality we have: $$\ho^{0}(\mathcal{O}(p,q))=\ho^{3}(\mathcal{O}(p,q))=0,$$ for $$(-3,0),(-3,0),(-3,0),(-2,1),(-2,1),(-2,1),(-1,2),(-1,2),(-1,2).$$  Since $\chi(\mathcal{O}(p,q))=0,$ for the above values, thereby ${\rm dim}\ho^{1}(\mathcal{O}(p,q))={\rm dim}\ho^{2}(\mathcal{O}(p,q)).$
   
Moreover, using the fact that $\ho^{i}(\mathcal{O}_{|E}(qE))\cong\ho^{2-i}(\p2, \mathcal{O}_{\p2}(q-3))^{\ast},$ by Serre duality on the plane, and the Euler Characteristic $\chi(\mathcal{O}_{|E}(qE))=\chi_{\p2}(\mathcal{O}_{\p2}(q-3))$ one has $\ho^{i}(\mathcal{O}_{|E}(1))=\ho^{i}(\mathcal{O}_{|E}(-2))=0$ for $i=0,1,2,3.$ Finally, by considering the restriction sequences $$ 0\to\mathcal{O}(p,0)\to\mathcal{O}(p,1)\to\mathcal{O}_{|E}(1)\to0$$ and $$0\to\mathcal{O}(p,1)\to\mathcal{O}(p,2)\to\mathcal{O}_{|E}(2)\to0,$$ for $p=-1,-2,-3,$ and the associated long exact sequences in cohomology, it follows that $\ho^{1}(\mathcal{O}(p,1))\cong\ho^{2}(\mathcal{O}(p,2))=0,$ for $p=-1,-2,-3.$
    
\end{itemize}
\end{proof}

For the integer coefficients appearing in the Chern Formulae \eqref{chern-ch}

\begin{lemma}\cite[lemma 4.2]{Liu}\label{van2}
For any $p,q\in\mathbb{Z},$ one has $$h^{1}(\mathcal{O}_{\bp3}(p,q))\cdot h^{2}(\mathcal{O}_{\bp3}(p,q))=0.$$
\end{lemma}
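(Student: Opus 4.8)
The plan is to separate the statement according to the sign of $q$: I will show that $\ho^{1}(\bp3,\mathcal{O}_{\bp3}(p,q))=0$ for all $p$ and all $q\geq 0$, and that $\ho^{2}(\bp3,\mathcal{O}_{\bp3}(p,q))=0$ for all $p$ and all $q\leq 0$. Since every integer $q$ satisfies $q\geq 0$ or $q\leq 0$, this immediately gives $h^{1}(\mathcal{O}_{\bp3}(p,q))\cdot h^{2}(\mathcal{O}_{\bp3}(p,q))=0$ in all cases.

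The base case is $q=0$. Since $\mathcal{O}_{\bp3}(p,0)=\pull\mathcal{O}_{\p3}(p)$, the identities $\dimg\mathcal{O}_{\bp3}=\mathcal{O}_{\p3}$ and $\mathcal{R}^{i}\dimg\mathcal{O}_{\bp3}=0$ for $i>0$ (used already in the proof of Proposition~\ref{van1}), together with the projection formula, give $\ho^{i}(\bp3,\mathcal{O}_{\bp3}(p,0))\cong\ho^{i}(\p3,\mathcal{O}_{\p3}(p))$; this vanishes for $i=1,2$ and every $p$ by Bott's formula on $\p3$.

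For the induction I use the restriction sequences to the exceptional divisor $E\cong\p2$. Tensoring $0\to\mathcal{O}_{\bp3}(-E)\to\mathcal{O}_{\bp3}\to\mathcal{O}_{E}\to 0$ with $\mathcal{O}_{\bp3}(p,q)$ gives, for every $q\in\mathbb{Z}$,
$$0\to\mathcal{O}_{\bp3}(p,q-1)\to\mathcal{O}_{\bp3}(p,q)\to\mathcal{O}_{\bp3}(p,q)|_{E}\to 0,$$
where $\mathcal{O}_{\bp3}(p,q)|_{E}$ is a line bundle on $\p2$ (in fact $\mathcal{O}_{\p2}(-q)$, since $H|_{E}$ is trivial and $\mathcal{O}_{\bp3}(E)|_{E}=N_{E/\bp3}=\mathcal{O}_{\p2}(-1)$, but all that is needed below is that it is \emph{some} line bundle on $\p2$). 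The crucial point is that $\ho^{1}(\p2,\mathcal{O}_{\bp3}(p,q)|_{E})=0$, because $\ho^{1}$ of an arbitrary line bundle on $\p2$ vanishes. Feeding this into the long exact sequence in cohomology: going upward from $q=0$, the segment $\ho^{1}(\mathcal{O}_{\bp3}(p,q-1))\to\ho^{1}(\mathcal{O}_{\bp3}(p,q))\to\ho^{1}(\p2,\mathcal{O}_{\bp3}(p,q)|_{E})=0$ shows $\ho^{1}(\mathcal{O}_{\bp3}(p,q))$ is a quotient of $\ho^{1}(\mathcal{O}_{\bp3}(p,q-1))$, so by induction on $q\geq 0$ it remains zero; going downward from $q=0$, the same sequence with $q$ replaced by $q+1$ has the segment $0=\ho^{1}(\p2,\mathcal{O}_{\bp3}(p,q+1)|_{E})\to\ho^{2}(\mathcal{O}_{\bp3}(p,q))\to\ho^{2}(\mathcal{O}_{\bp3}(p,q+1))$, so $\ho^{2}(\mathcal{O}_{\bp3}(p,q))$ injects into $\ho^{2}(\mathcal{O}_{\bp3}(p,q+1))$ and by descending induction on $q\leq 0$ it remains zero. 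This proves the two claims, and with them the lemma.

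There is no genuinely difficult step; the only things to watch are the identification of the restriction of $\mathcal{O}_{\bp3}(p,q)$ to $E$ --- which, as noted, is only needed in the weak form ``a line bundle on $\p2$'' --- and the asymmetry of the bookkeeping: the vanishing of $\ho^{1}$ for every line bundle on $\p2$ is exactly what makes the induction propagate \emph{upward} for $\ho^{1}$ and \emph{downward} for $\ho^{2}$, in parallel with the fact that line bundles on $\p3$ have cohomology only in degrees $0$ and $3$.
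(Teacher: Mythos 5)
Your proof is correct. Note that the paper does not actually prove this lemma at all --- it is quoted verbatim from \cite[Lemma 4.2]{Liu} --- so your argument supplies a self-contained proof where the paper only offers a citation. The two pillars of your argument both check out: the base case $\ho^{i}(\bp3,\pull\mathcal{O}_{\p3}(p))\cong\ho^{i}(\p3,\mathcal{O}_{\p3}(p))$ follows from $R\dimg\mathcal{O}_{\bp3}=\mathcal{O}_{\p3}$ (already used in the proof of Proposition \ref{van1}) together with Leray and the projection formula, and the restriction $\mathcal{O}_{\bp3}(p,q)|_{E}\cong\mathcal{O}_{\p2}(-q)$ is indeed a line bundle on $\p2$, whose $\ho^{1}$ vanishes --- which is exactly what makes the long exact sequence give a surjection $\ho^{1}(\mathcal{O}(p,q-1))\twoheadrightarrow\ho^{1}(\mathcal{O}(p,q))$ and an injection $\ho^{2}(\mathcal{O}(p,q))\hookrightarrow\ho^{2}(\mathcal{O}(p,q+1))$ for every $q$. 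The two inductions then propagate in the directions you indicate, and since every integer satisfies $q\geq0$ or $q\leq0$ the product $h^{1}\cdot h^{2}$ vanishes. In fact your argument proves slightly more than the stated lemma: combined with Serre duality ($\ho^{2}(\mathcal{O}(p,q))\cong\ho^{1}(\mathcal{O}(-4-p,2-q))^{\ast}$), your $\ho^{1}$-vanishing for $q\geq0$ yields $\ho^{2}(\mathcal{O}(p,q))=0$ for all $q\leq2$, which recovers part of Proposition \ref{van1}(iii) as well. This is a worthwhile addition, since the cited source proves the statement in the broader context of classifying exceptional collections, whereas your argument uses nothing beyond the restriction sequence to the exceptional divisor.
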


Here we put $h^{i}(\mathcal{F}):=\dim\ho^{i}(\mathcal{F})$ for a sheaf $\mathcal{F}$ on $\bp3.$

\vspace{0.3cm}

\begin{lemma}\label{intg-condition} \hspace{8cm}
\begin{itemize}
    \item[(i)] For any coherent sheaf $\mathcal{F}$ on $\bp3,$ we have $$m\equiv k(a+4)-l(b-6)\quad (mod 2)$$
    \item[(ii)] A rank $2$ reflexive sheaf on $\bp3$ is locally free if, and only if, $m=0.$ 
\end{itemize}
\end{lemma}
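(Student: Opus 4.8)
The plan is to get part (i) for free from the integrality of Euler characteristics, and to reduce part (ii) to the standard structure theory of rank $2$ reflexive sheaves on a smooth threefold.

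For (i), I would evaluate the Riemann--Roch formula \eqref{euler} at $p=q=0$. The second line of \eqref{euler} then vanishes, and we are left with $\chi(\mathcal{F})=r\cdot\chi(\mathcal{O})+\chi(\mathcal{O}(a,b))-1+\tfrac16\bigl[3m-3k(a+4)-3l(b-2)\bigr]$. Since $\chi(\mathcal{F})$, $r\cdot\chi(\mathcal{O})=r$ and $\chi(\mathcal{O}(a,b))-1$ are integers, the remaining term $\tfrac16\bigl[3m-3k(a+4)-3l(b-2)\bigr]=\tfrac12\bigl[m-k(a+4)-l(b-2)\bigr]$ must be an integer, that is, $m\equiv k(a+4)+l(b-2)\pmod 2$. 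Finally $l(b-2)-l(b-6)=4l$ is even and $-l(b-6)\equiv l(b-6)\pmod 2$, so this congruence is exactly the claimed $m\equiv k(a+4)-l(b-6)\pmod 2$.

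For (ii), I would first identify $m$ with the third Chern class. From $\mathrm{ch}_2(\mathcal{F})=\tfrac12(c_1^2-2c_2)$ together with the relations $E\cdot H=0$ and $E^3=H^3$ in $A^\ast(\bp3)$, one reads off $c_1(\mathcal{F})=aH+bE$ and $c_2(\mathcal{F})=kH^2+lE^2$; then comparing the degree-$3$ part of $\mathrm{ch}(\mathcal{F})$ with $\mathrm{ch}_3=\tfrac16(c_1^3-3c_1c_2+3c_3)$, using $c_1^3=(a^3+b^3)[H^3]$ and $c_1c_2=(ak+bl)[H^3]$, gives $c_3(\mathcal{F})=m\,[H^3]$. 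The easy implication is then immediate: a rank $2$ locally free sheaf has $c_3=0$, hence $m=0$. For the converse, let $\mathcal{F}$ be reflexive of rank $2$ with $m=0$. A reflexive sheaf on a smooth variety is locally free outside a closed subset of codimension $\geq 3$, so on $\bp3$ the non-locally-free locus $S$ of $\mathcal{F}$ is a finite set of points. Here I would invoke Hartshorne's local analysis of rank $2$ reflexive sheaves on a smooth threefold: at each $x\in S$ there is a strictly positive local invariant $\ell_x$ (e.g. the length at $x$ of $\mathcal{E}xt^1(\mathcal{F},\omega_{\bp3})$, a sheaf supported exactly on $S$), and $c_3(\mathcal{F})=\sum_{x\in S}\ell_x$; in particular $c_3(\mathcal{F})\geq 0$ with equality if and only if $S=\emptyset$. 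Thus $0=m=c_3(\mathcal{F})$ forces $S=\emptyset$, i.e. $\mathcal{F}$ is locally free.

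The only step here that is not purely formal is the last input: the positivity of the "local $c_3$" of a rank $2$ reflexive sheaf at a singular point and its identification with a length. I would either cite Hartshorne's \emph{Stable reflexive sheaves} directly, or reprove it by restricting $\mathcal{F}$ to a general surface $\Sigma\in|F|$ through $x$, comparing $\mathcal{F}|_\Sigma$ with its reflexive hull on the smooth surface $\Sigma$ (on which reflexive equals locally free), and tracking the length of the cokernel against $c_3(\mathcal{F})$ via Riemann--Roch on $\Sigma$. Everything else — the congruence in (i) and the Chern-class bookkeeping in (ii) — is routine given the conventions of Section \ref{blow-up}.
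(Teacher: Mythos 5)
Your proposal is correct and follows essentially the same route as the paper: part (i) is exactly the integrality-of-$\chi$ observation the paper makes, and part (ii) is the Hartshorne \emph{Stable reflexive sheaves} argument, which the paper carries out explicitly on $\bp3$ (computing $c(\mathcal{F}^{\vee})$ both from $\mathcal{F}^{\vee}\cong\mathcal{F}\otimes\det(\mathcal{F})^{-1}$ and from a length-one locally free resolution to conclude $m=t$, the length of $\mathcal{E}xt^{1}(\mathcal{F},\mathcal{O}_{\bp3})$) rather than citing it. Your identification $c_{3}(\mathcal{F})=m[H^{3}]$ and the positivity argument are correct and amount to the same computation.
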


\vspace{0.3cm}

\begin{proof}
\begin{itemize}
    \item[(i)] Immediate, since the Euler characteristic $\chi(F)$ must be integer.
    \item[(ii)] The proof is similar to \cite[Proposition 2.6]{Hart1};
    The total Chern class of $\mathcal{F}(p,q)$ is given by \begin{align}{\rm c}(\mathcal{F}(p,q))=&1+[{\rm c}_{1}(\mathcal{F})+2{\rm c}_{1}(\mathcal{O}(p,q))] \notag \\
    +&[{\rm c}_{2}(\mathcal{F})+{\rm c}_{1}(\mathcal{F}){\rm c}_{1}(\mathcal{O}(p,q))+{\rm c}_{1}(\mathcal{O}(p,q))^{2}]+[{\rm c}_{3}(\mathcal{F})] \notag \\
    =&1+[(a+2p)H+(b+2q)E]+[(k+ap+a^{2})H^{2}+(l+qb+b^{2})E^{2}] \notag \\
    &+[mH^{3}]. \notag
    \end{align} 
    On the other hand, since $\bp3$ is smooth, hence integral and locally factorial, then for an rank $2$ coherent sheaf, one has $\mathcal{F}^{\vee}\cong\mathcal{F}\otimes\Det(\mathcal{F})^{-1}.$ Hence, by putting $p=-a$ and $q=-b,$ it follows that \begin{equation}\label{chern-dual}{\rm c}(\mathcal{F}^{\vee})=1-[aH+bE]+[kH^{2}+lE^{2}]+[mH^{3}].\end{equation}
    On the other hand, since $\mathcal{F}$ reflexive, then its cohomological dimension is less than, or equal to, $1,$ and it is given by a locally free resolution $0\to\mathcal{E}_{1}\to\mathcal{E}_{0}\to\mathcal{F}\to0.$ Applying $\mathcal{H}om(-,\mathcal{O}_{\bp3}),$ one gets: 
    $$0\to\mathcal{F}^{\vee}\to\mathcal{E}_{0}^{\vee}\to\mathcal{E}_{1}^{\vee}\to\mathcal{E}xt^{1}(\mathcal{F},\mathcal{O}_{\bp3}),$$ where the sheaf $\mathcal{E}xt^{1}(\mathcal{F},\mathcal{O}_{\bp3})$ is supported on the singularity locus of $\mathcal{F},$ hence, by reflexivity, on a finite set of points, and its Chern class is given by ${\rm c}(\mathcal{E}xt^{1}(\mathcal{F},\mathcal{O}_{\bp3}))=1+2tH^{3},$ where $t={\dim}\ho^{0}(\mathcal{E}xt^{1}(\mathcal{F},\mathcal{O}_{\bp3}))$ is its length \cite[Lemma 2.7]{Hart1}. In this way, one has ${\rm c}(\mathcal{F})={\rm c}(\mathcal{E}_{0}){\rm c}(\mathcal{E}_{1})^{-1},$ and $${\rm c}(\mathcal{F}^{\vee})={\rm c}(\mathcal{E}_{0}^{\vee})^{-1}{\rm c}(\mathcal{E}_{1}){\rm c}(\mathcal{E}xt^{1}(\mathcal{F},\mathcal{O}_{\bp3})),$$ Using the fact that for a locally free sheaf one has ${\rm c}_{i}(\mathcal{E}^{\vee})=(-)^{i}{\rm c}_{i}(\mathcal{E}^{\vee}),$ we get \begin{align}\label{chern-resol}
        {\rm c}(\mathcal{F}^{\vee})&=(1-[aH+bE]+[kH^{2}+lE^{2}]-[mH^{3}])(1+2tH^{3}) \notag \\
        &=1-[aH+bE]+[kH^{2}+lE^{2}]+[(2t-m)H^{3}]
    \end{align}
    It follows, from the above line and \eqref{chern-dual}, that $(2t-m)=m,$ hence $m=t.$ In particular, for $\mathcal{E}xt^{1}(\mathcal{F},\mathcal{O}_{\bp3})=0$ if, and only if, $\mathcal{F}$ locally free, if, and only if, $m=0.$  
\end{itemize}

\end{proof}

\vspace{0.3cm}

Another result, due to Hartshorne \cite[Theorem 2.5]{Hart1}, that can be generalized to any locally integral scheme of dimension $3,$ is the following:
\begin{theorem}\label{Serre-duality}
Let $\mathcal{F}$ be a rank $2$ reflexive sheaf on $\bp3.$ Then there are isomorphisms $$\ho^{0}(\mathcal{F}^{\vee}\otimes\omega_{\bp3})\cong \ho^{3}(\mathcal{F})^{\ast}$$ $$\ho^{3}(\mathcal{F}^{\vee}\otimes\omega_{\bp3})\cong \ho^{0}(\mathcal{F})^{\ast}$$ and an exact sequence:
$$0\to\ho^{1}(\mathcal{F}^{\vee}\otimes\omega_{\bp3}) \to \ho^{2}(\mathcal{F})^{\ast}\to \ho^{0}(\mathcal{E}xt^{1}(\mathcal{F},\omega_{\bp3}))\to \ho^{2}(\mathcal{F}^{\vee}\otimes\omega_{\bp3}) \to \ho^{1}(\mathcal{F})^{\ast}\to 0$$
\end{theorem}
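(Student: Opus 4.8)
Following Hartshorne's argument for \cite[Theorem 2.5]{Hart1}, the plan is to run the local-to-global $\ext$ spectral sequence and then feed in Grothendieck--Serre duality; the only features of $\bp3$ that are used are that it is smooth, projective, and $3$-dimensional (this is why the remark preceding the statement allows the same proof on any locally integral $3$-fold carrying a dualizing sheaf).

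First I would recall the two structural facts about a rank $2$ reflexive sheaf $\mathcal{F}$ on $\bp3$ that were already established inside the proof of Lemma~\ref{intg-condition}: its cohomological dimension is at most $1$, so that $\mathcal{E}xt^q(\mathcal{F},\omega_{\bp3})=0$ for every $q\ge 2$; and $\mathcal{E}xt^1(\mathcal{F},\omega_{\bp3})$ is supported on the singularity locus of $\mathcal{F}$, which by reflexivity is a finite set of points, whence $\ho^p(\mathcal{E}xt^1(\mathcal{F},\omega_{\bp3}))=0$ for all $p\ge 1$. Since $\omega_{\bp3}$ is an invertible sheaf one also has $\mathcal{E}xt^0(\mathcal{F},\omega_{\bp3})=\mathcal{H}om(\mathcal{F},\mathcal{O}_{\bp3})\otimes\omega_{\bp3}=\mathcal{F}^{\vee}\otimes\omega_{\bp3}$.

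Next I would write down the Grothendieck spectral sequence
\[
E_2^{p,q}=\ho^p\!\left(\mathcal{E}xt^q(\mathcal{F},\omega_{\bp3})\right)\ \Longrightarrow\ \ext^{p+q}(\mathcal{F},\omega_{\bp3}),
\]
with differentials $d_r\colon E_r^{p,q}\to E_r^{p+r,\,q-r+1}$. By the two facts above the only possibly nonzero entries of $E_2$ sit in the row $q=0$, where they are $\ho^p(\mathcal{F}^{\vee}\otimes\omega_{\bp3})$ for $p=0,1,2,3$, and at the single spot $(p,q)=(0,1)$, where the entry is $\ho^0(\mathcal{E}xt^1(\mathcal{F},\omega_{\bp3}))$. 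Consequently the only nonzero differential is $d_2\colon \ho^0(\mathcal{E}xt^1(\mathcal{F},\omega_{\bp3}))\to \ho^2(\mathcal{F}^{\vee}\otimes\omega_{\bp3})$, and the spectral sequence degenerates at $E_3=E_\infty$. Reading off the abutment in total degrees $0$ and $3$, where the $E_\infty$-page has a single nonzero term, gives $\ext^0(\mathcal{F},\omega_{\bp3})\cong\ho^0(\mathcal{F}^{\vee}\otimes\omega_{\bp3})$ and $\ext^3(\mathcal{F},\omega_{\bp3})\cong\ho^3(\mathcal{F}^{\vee}\otimes\omega_{\bp3})$; in total degrees $1$ and $2$ the $E_\infty$-filtration yields
\[
0\to\ho^1(\mathcal{F}^{\vee}\otimes\omega_{\bp3})\to \ext^1(\mathcal{F},\omega_{\bp3})\to\ker d_2\to 0,
\qquad
0\to\operatorname{im} d_2\to\ho^2(\mathcal{F}^{\vee}\otimes\omega_{\bp3})\to\ext^2(\mathcal{F},\omega_{\bp3})\to 0 .
\]
Splicing these with the tautological sequence $0\to\ker d_2\to\ho^0(\mathcal{E}xt^1(\mathcal{F},\omega_{\bp3}))\to\operatorname{im} d_2\to 0$ produces the five-term exact sequence
\[
0\to\ho^1(\mathcal{F}^{\vee}\otimes\omega_{\bp3})\to \ext^1(\mathcal{F},\omega_{\bp3})\to\ho^0(\mathcal{E}xt^1(\mathcal{F},\omega_{\bp3}))\to\ho^2(\mathcal{F}^{\vee}\otimes\omega_{\bp3})\to\ext^2(\mathcal{F},\omega_{\bp3})\to 0 .
\]

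Finally I would substitute Grothendieck--Serre duality on $\bp3$, which in the form displayed in Section~\ref{blow-up} (applied with $\mathcal{O}_{\bp3}$ in the first slot and $\mathcal{F}$ in the second, and then dualized over $\mathbb{C}$) gives $\ext^j(\mathcal{F},\omega_{\bp3})\cong\ho^{3-j}(\mathcal{F})^{\ast}$ for $j=0,1,2,3$. This converts the two isomorphisms above into $\ho^0(\mathcal{F}^{\vee}\otimes\omega_{\bp3})\cong\ho^3(\mathcal{F})^{\ast}$ and $\ho^3(\mathcal{F}^{\vee}\otimes\omega_{\bp3})\cong\ho^0(\mathcal{F})^{\ast}$, and the five-term sequence into the one claimed. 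The only genuine input is the pair of structural facts in the second step; everything after that is a formal manipulation of a two-row spectral sequence, and the one place that needs care is the bookkeeping of the $E_\infty$-filtration, making sure the edge morphisms are oriented so that the pieces splice into the stated sequence rather than its reverse.
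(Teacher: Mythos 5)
Your proof is correct and is exactly the argument behind the result the paper invokes: the paper gives no proof of its own, citing Hartshorne's Theorem 2.5, whose proof is precisely this two-row local-to-global $\ext$ spectral sequence (using homological dimension $\le 1$ and the zero-dimensional support of $\mathcal{E}xt^{1}$) followed by Grothendieck--Serre duality $\ext^{j}(\mathcal{F},\omega_{\bp3})\cong\ho^{3-j}(\mathcal{F})^{\ast}$. Your bookkeeping of the $E_\infty$-filtration and the splicing of the edge sequences matches the stated five-term sequence, so nothing is missing.
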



The following consequence, of the above Theorem will be useful
in the next section.
\begin{theorem}\label{almost-roggero}
Let $\mathcal{F}$ be a rank $2$ reflexive sheaf on $\bp3$ with $c_{1}(\mathcal{F})=0,$ and satisfying $\ho^{2}(\mathcal{F}(-2,1))=0=\ho^{1}(\mathcal{F}(-2,1)).$ Then $\mathcal{F}$ is locally free.
\end{theorem}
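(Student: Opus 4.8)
The plan is to combine the fundamental divisor vanishings recalled in Section~\ref{blow-up} with Theorem~\ref{Serre-duality} to force $\mathcal{E}xt^1(\mathcal{F},\omega_{\bp3})=0$, and then invoke Lemma~\ref{intg-condition}(ii). First I would observe that since $\mathcal{F}$ is rank $2$ with $c_1(\mathcal{F})=0$ and $\bp3$ is smooth (hence locally factorial), we have $\mathcal{F}^{\vee}\cong\mathcal{F}$, so $\mathcal{F}^{\vee}\otimes\omega_{\bp3}\cong\mathcal{F}(-4,2)$. Next, recalling that the fundamental divisor is $F=2H-E$, note that $\mathcal{F}(-4,2)=\mathcal{F}\otimes\mathcal{O}(-2F)=\mathcal{F}(-2,1)\otimes\mathcal{O}(-F)$; more to the point, $\mathcal{F}(-4,2)$ is exactly $\mathcal{F}(-2,1)$ twisted so that the relevant cohomology groups are the "dual side" of the hypotheses. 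The hypotheses $\ho^{2}(\mathcal{F}(-2,1))=0=\ho^{1}(\mathcal{F}(-2,1))$ are what we will feed into the exact sequence of Theorem~\ref{Serre-duality}.

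The key step is to apply Theorem~\ref{Serre-duality} to $\mathcal{F}$. Its exact sequence reads
\[
0\to\ho^{1}(\mathcal{F}^{\vee}\otimes\omega_{\bp3})\to\ho^{2}(\mathcal{F})^{\ast}\to\ho^{0}(\mathcal{E}xt^{1}(\mathcal{F},\omega_{\bp3}))\to\ho^{2}(\mathcal{F}^{\vee}\otimes\omega_{\bp3})\to\ho^{1}(\mathcal{F})^{\ast}\to0.
\]
I would want to arrange, by an appropriate twist, that the two outer vanishing hypotheses kill $\ho^{2}(\mathcal{F})$ and $\ho^{1}(\mathcal{F})$ (after twisting) and that $\ho^{1}$ and $\ho^{2}$ of $\mathcal{F}^{\vee}\otimes\omega_{\bp3}$ also vanish, so that the middle term $\ho^{0}(\mathcal{E}xt^{1}(\mathcal{F},\omega_{\bp3}))$ is squeezed to zero. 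Concretely: apply the theorem to $\mathcal{F}(-2,1)$ in place of $\mathcal{F}$ (it is still rank $2$ reflexive). Then $\mathcal{F}(-2,1)^{\vee}\otimes\omega_{\bp3}\cong\mathcal{F}(2,-1)\otimes\mathcal{O}(-4,2)=\mathcal{F}(-2,1)$, since $c_1(\mathcal{F})=0$ makes $\mathcal{F}^\vee\cong\mathcal{F}$ and $(-2,1)+(-4,2)=(-6,3)$... here I need to be careful: $\mathcal{F}(-2,1)^\vee = \mathcal{F}^\vee(2,-1)\cong\mathcal{F}(2,-1)$, and tensoring with $\omega_{\bp3}=\mathcal{O}(-4,2)$ gives $\mathcal{F}(-2,1)$ back. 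So the sequence becomes self-referential:
\[
0\to\ho^{1}(\mathcal{F}(-2,1))\to\ho^{2}(\mathcal{F}(-2,1))^{\ast}\to\ho^{0}(\mathcal{E}xt^{1}(\mathcal{F}(-2,1),\omega_{\bp3}))\to\ho^{2}(\mathcal{F}(-2,1))\to\ho^{1}(\mathcal{F}(-2,1))^{\ast}\to0.
\]
By hypothesis both $\ho^{1}(\mathcal{F}(-2,1))$ and $\ho^{2}(\mathcal{F}(-2,1))$ vanish, so $\ho^{0}(\mathcal{E}xt^{1}(\mathcal{F}(-2,1),\omega_{\bp3}))=0$. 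Since $\omega_{\bp3}$ is a line bundle, $\mathcal{E}xt^{1}(\mathcal{F}(-2,1),\omega_{\bp3})\cong\mathcal{E}xt^{1}(\mathcal{F},\omega_{\bp3})(-2,1)$, and as this sheaf is supported in dimension $\le 1$ (reflexivity), the vanishing of its global sections after a twist forces the sheaf itself to be zero; equivalently $\mathcal{E}xt^{1}(\mathcal{F},\mathcal{O}_{\bp3})=0$. By Lemma~\ref{intg-condition}(ii) this means $m=0$, i.e.\ $\mathcal{F}$ is locally free.

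The main obstacle I anticipate is the last implication: deducing $\mathcal{E}xt^{1}(\mathcal{F},\omega_{\bp3})=0$ from the vanishing of $\ho^{0}$ of a single twist of it. For a sheaf supported on a finite set of points this is immediate, but for a reflexive (non-locally-free) rank $2$ sheaf on a threefold the $\mathcal{E}xt^1$ is a priori supported on a curve, and a single twist need not have enough sections to detect it. I would handle this either by noting that $\mathcal{E}xt^1(\mathcal{F},\omega_{\bp3})$ for $\mathcal{F}$ reflexive of rank $2$ is actually a finite-length sheaf (Hartshorne, as used in the proof of Lemma~\ref{intg-condition}(ii), where it is shown $\mathcal{E}xt^1(\mathcal{F},\mathcal{O}_{\bp3})$ is supported on points for $\mathcal{F}$ reflexive), or, more robustly, by running the argument for all sufficiently positive twists $\mathcal{F}(n,0)$ simultaneously and using that the hypotheses propagate. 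In fact the cleanest route is: since $\mathcal{E}xt^1(\mathcal{F},\mathcal{O}_{\bp3})$ is already known (from the proof of Lemma~\ref{intg-condition}) to be a length-$m$ sheaf supported at finitely many points, $\ho^0$ of any twist of it has dimension $m$, so the vanishing above gives $m=0$ directly. This reduces the whole argument to the single application of Theorem~\ref{Serre-duality} to $\mathcal{F}(-2,1)$ together with the bookkeeping $\mathcal{F}^\vee\cong\mathcal{F}$, $\omega_{\bp3}=\mathcal{O}(-4,2)$.
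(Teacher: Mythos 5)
Your proposal is correct and is essentially the paper's own argument: apply the five-term sequence of Theorem~\ref{Serre-duality} to $\mathcal{F}(-2,1)$, use $\mathcal{F}^{\vee}\cong\mathcal{F}$ (from $c_1=0$ and rank $2$) together with $\omega_{\bp3}=\mathcal{O}(-4,2)$ to see the sequence is self-dual, and conclude $\ho^{0}(\mathcal{E}xt^{1}(\mathcal{F}(-2,1),\omega_{\bp3}))=0$. Your closing observation that $\mathcal{E}xt^{1}(\mathcal{F},\mathcal{O}_{\bp3})$ is a finite-length sheaf supported at points (by reflexivity, as in the proof of Lemma~\ref{intg-condition}) is exactly the detail the paper leaves implicit in "and the result follows," so your write-up is if anything slightly more complete.
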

\begin{proof}
From the $5$-term sequence in Theorem \ref{Serre-duality} applied to $\mathcal{F}(-2,1)$ one has  
$$0\to\ho^{1}(\mathcal{F}(-2,1)) \to \ho^{2}(\mathcal{F}(-2,1))^{\ast}\to \ho^{0}(\mathcal{E}xt^{1}(\mathcal{F}(-2,1),\omega_{\bp3}))$$ $$\qquad\qquad\qquad \to \ho^{2}(\mathcal{F}(-2,1)) \to \ho^{1}(\mathcal{F}(-2,1))^{\ast}\to 0.$$ By the vanishing conditions, we have that $\ho^{0}(\mathcal{E}xt^{1}(\mathcal{F}(-2,1),\omega_{\bp3}))=0,$ and the result follow.
\end{proof}

\section{Instanton sheaves:}\label{instanton}

Casnati-Coskun-Genk-Malaspina in \cite[Definition 1.1]{ccgm}, had a general definition for instanton bundles on Fano threefolds. We shall adopt the following definition in order to include torsion-free sheaves:

\begin{definition}\label{Instanton-def}
A rank $r$ torsion-free sheaf $\mathcal{F},$ on $\bp3,$ will be called an instanton sheaf, if it is $\mu_{L}$-semi-stable, for the polarization $L=\mathcal{O}(2,-1),$ has first Chern class $c_{1}(\mathcal{F})=0$ and satisfies:
\begin{itemize}
    \item[(i)] $\ho^{0}(\mathcal{F})=\ho^{3}(\mathcal{F}(-4,2))=0;$ 
    \item[(ii)] $\ho^{1}(\mathcal{F}(-2,1))=\ho^{2}(\mathcal{F}(-2,1))=0;$
    \item[(iii)] $\ho^{2}(\mathcal{F}(0,-1))=\ho^{2}(\mathcal{F}(-1,1))=0.$
\end{itemize}
\end{definition}

\begin{remark}\label{rmk-on-def}

In the case of a rank $2$ locally free $\mathcal{F}$, the condition on $\ho^{2}(\mathcal{F}(-2,1))$ follows from $\ho^{1}(\mathcal{F}(-2,1))=0,$ by using Serre duality. In the same way, the vanishing of $\ho^{3}(\mathcal{F}(-4,2))$ follows from $\ho^{0}(\mathcal{F})=0.$ Furthermore, in this case, the vanishings in item $(iii)$ can be proved as in \cite[Proposition 5.2]{ccgm}. We remark also that by applying Theorem \ref{almost-roggero}, it follows that there are no strictly reflexive instanton sheaves of rank $2$ on $\bp3.$ 

\end{remark}

We will also use the following
\begin{definition}\label{rk-0}
A pure $1$-dimensional sheaf $\mathcal{Q}$ on $\bp3$ is called a rank $0$- instanton sheaf if it satisfies $\ho^{0,1}(\mathcal{Q}(-2,1))=0.$
\end{definition}

In what follows we shall consider the following exceptional sequence of line bundles:

\begin{equation}\label{ex-seq}
<\mathcal{O}_{\bp3}(-2,1),\mathcal{O}_{\bp3}(-1,0),\mathcal{O}_{\bp3}(0,-1),\mathcal{O}_{\bp3}(-2,2), \mathcal{O}_{\bp3}(-1,1),\mathcal{O}_{\bp3}>
\end{equation}

\vspace{0.5cm}

\begin{lemma}\label{vanishing}
If $\mathcal{F}$ is a torsion-free instanton sheaf according to Definition \ref{Instanton-def}, then for each member $\mathcal{L}$ of the sequence \eqref{ex-seq} we have:
\begin{itemize}
    \item[(i)] $\chi(\mathcal{F}(-2,1))=0$
    \item[(ii)] $\ho^{0}(\mathcal{F}\otimes\mathcal{L})=\ho^{3}(\mathcal{F}\otimes\mathcal{L})=0;$ 
    \item[(iii)] $\ho^{2}(\mathcal{F})=\ho^{2}(\mathcal{F}(-1,0))=0.$  
\end{itemize}

\begin{proof}
\begin{itemize}
    \item[(i)] This follows from formula \eqref{euler}
    \item[(ii)] This can be seen from the vanishing $\ho^{0}(\mathcal{F})=0$ and the fact that for each $\mathcal{L},$ in the given list, we have an injection $\mathcal{F}\otimes\mathcal{L}\hookrightarrow\mathcal{F}.$ 
    On the other hand, from Definition \ref{Instanton-def}, items $(i)$ and $(ii),$ it follows that $\ho^{3}(-2,1)=0,$ moreover, one has a surjection $\ho^{3}(\mathcal{F}(-2,1))\twoheadrightarrow\ho^{3}(\mathcal{F}\otimes\mathcal{L}).$ thus the result follows.
    \item[(iii)] We start by proving that $\ho^{2}(\mathcal{F})$ consider the restriction sequence
    $$0\to\mathcal{O}_{\bp3}(-2,1)\to\mathcal{O}_{\bp3}\to\mathcal{O}_{S}\to0$$
    to the Fano surface $S.$ Twisting by $\mathcal{F}$ and taking cohomology, it follows that $\ho^{2}(\mathcal{F})\cong\ho^{2}(\mathcal{F}|_{S}).$ By Serre-Grothendieck duality on $S,$ one obtains the isomorphism $\ho^{2}(\mathcal{F})\cong\Hom(F|_{S},\omega_{S}).$ Moreover we have an injection $\Hom(F|_{S},\omega_{S})\hookrightarrow\Hom(F|_{S},\mathcal{O}_{S})\cong\ho^{2}(\mathcal{F}|_{S}\otimes\omega_{S}).$ But $\omega_{S}=\mathcal{O}|_{S}(-2,1)$ by the adjunction formula. Now twisting the restriction sequence to $S$ by $\mathcal{F}(-2,1)$ and taking cohomology, one gets   $$0=\ho^{2}(\mathcal{F}(-2,1))\to\ho^{2}(\mathcal{F}|_{S}\otimes\omega_{S})\to\ho^{3}(\mathcal{F}(-4,2))=0,$$ thus, the vanishing holds.
    To prove the second vanishing, we first remind the reader that $\bp3$ is the projective bundle $\mathbb{P}(\mathcal{O}_{\p2}\oplus\mathcal{O}_{\p2}(1))\stackrel{pr}{\to}\p2,$ over $\p2.$ We will use the pull-back of the cotangent bundle $\Omega^{1}:=pr^{\ast}\Omega^{1}_{\p2}.$ This sheaf fits in the following exact sequences:
    \begin{align}
        0\to\Omega^{1}\to\mathcal{O}_{\bp3}(-1,1)^{\oplus 3}\to\mathcal{O}_{\bp3}\to0; \label{omega1}\\
        0\to\mathcal{O}_{\bp3}(-3,3)\to\mathcal{O}_{\bp3}(-2,2)^{\oplus 3}\to\Omega^{1}\to0. \label{omega2}
    \end{align}
    The first sequence is the pull-back of the Euler sequence, and the second one is obtained by dualizing the first and twisting by $\mathcal{O}_{\bp3}(-3,3).$ 
    Twisting \eqref{omega1} by $\mathcal{F}(0,-1)$ and taking the long sequence in cohomology, one obtains the following:
    $\ho^{0}(\Omega^{1}\otimes\mathcal{F}(0,-1))=0, \ho^{3}(\Omega^{1}\otimes\mathcal{F}(0,-1))=0$ and a surjection $\ho^{2}(\Omega^{1}\otimes\mathcal{F}(0,-1))\twoheadrightarrow\ho^{0}(\mathcal{F}(-1,0))^{\oplus 3}.$ On the other hand if we twist \eqref{omega2} by $\mathcal{F}(0,-1)$ we get $$0=\ho^{2}(\mathcal{F}(-2,1))^{\oplus 3}\to\ho^{2}(\Omega^{1}\otimes\mathcal{F}(0,-1))\to\ho^{3}(\mathcal{F}(-3,2))=0,$$ and the result follows. 
\end{itemize}

\end{proof}

\end{lemma}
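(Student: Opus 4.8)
I would reduce all three items to Definition~\ref{Instanton-def} using Grothendieck--Serre duality and two elementary facts. \emph{Fact 1}: if $\mathcal{L}$ is a line bundle on $\bp3$ with a nonzero section $s$, then for any torsion-free $\mathcal{G}$ multiplication by $s$ gives inclusions $\mathcal{G}\otimes\mathcal{L}^{\vee}\hookrightarrow\mathcal{G}$ and $\mathcal{G}\hookrightarrow\mathcal{G}\otimes\mathcal{L}$ with cokernels supported on the (two-dimensional) divisor $\{s=0\}$, whence $\ho^{0}(\mathcal{G}\otimes\mathcal{L}^{\vee})\hookrightarrow\ho^{0}(\mathcal{G})$ and $\ho^{3}(\mathcal{G})\twoheadrightarrow\ho^{3}(\mathcal{G}\otimes\mathcal{L})$. \emph{Fact 2}: by Proposition~\ref{van1} and the projection formula, $\pi_{\ast}\mathcal{O}_{\bp3}(a,b)=\mathcal{O}_{\p3}(a)\otimes\mathcal{I}_{p_{0}}^{-b}$ for $b\le0$ (and $\mathcal{O}_{\p3}(a)$ for $b\ge0$), so $h^{0}(\mathcal{O}_{\bp3}(a,b))\neq0$ precisely when $\p3$ carries a degree-$a$ hypersurface with a point of multiplicity $\ge -b$ at $p_{0}$; in particular $\mathcal{O}(2,0),\mathcal{O}(2,-1),\mathcal{O}(2,-2),\mathcal{O}(1,0),\mathcal{O}(1,1),\mathcal{O}(1,-1),\mathcal{O}(0,1)$ and $\mathcal{O}_{\bp3}$ all have sections.

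Item (i) is the Riemann--Roch computation \eqref{euler} with $c_{1}(\mathcal{F})=0$ at $(p,q)=(-2,1)$; it is also a formal consequence of (ii), since (ii) applied to $\mathcal{L}=\mathcal{O}(-2,1)$ together with Definition~\ref{Instanton-def}(ii) makes $\ho^{i}(\mathcal{F}(-2,1))=0$ for every $i$. For (ii): each $\mathcal{L}$ occurring in \eqref{ex-seq} has $\mathcal{L}^{\vee}$ on the list of Fact~2, so by Fact~1 $\ho^{0}(\mathcal{F}\otimes\mathcal{L})\hookrightarrow\ho^{0}(\mathcal{F})=0$. For the $\ho^{3}$-part I would first note $\ho^{3}(\mathcal{F}(-2,1))=0$: since $\mathcal{F}(-2,1)=\mathcal{F}(-4,1)\otimes\mathcal{O}(2,0)$, Fact~1 realizes it as a quotient of $\ho^{3}(\mathcal{F}(-4,1))=0$ (Definition~\ref{Instanton-def}(i)); then for each $\mathcal{L}$ the bundle $\mathcal{L}(2,-1)$ is again on the list of Fact~2, and $\ho^{3}(\mathcal{F}\otimes\mathcal{L})=\ho^{3}\bigl(\mathcal{F}(-2,1)\otimes\mathcal{L}(2,-1)\bigr)$ is a quotient of $\ho^{3}(\mathcal{F}(-2,1))=0$.

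For the vanishing $\ho^{2}(\mathcal{F})=0$ in (iii) I would restrict to a Fano surface $S\in|F|=|\mathcal{O}(2,-1)|$. Tensoring $0\to\mathcal{O}(-2,1)\to\mathcal{O}_{\bp3}\to\mathcal{O}_{S}\to0$ by $\mathcal{F}$ and using $\ho^{2}(\mathcal{F}(-2,1))=0$ (Definition~\ref{Instanton-def}(ii)) and $\ho^{3}(\mathcal{F}(-2,1))=0$ gives $\ho^{2}(\mathcal{F})\cong\ho^{2}(\mathcal{F}|_{S})$. By adjunction $\omega_{S}=\mathcal{O}(-2,1)|_{S}$, and $\mathcal{O}(2,-1)|_{S}$ has a section since $h^{0}(\mathcal{O}(F)|_{S})=8\neq0$; thus $\omega_{S}\hookrightarrow\mathcal{O}_{S}$, and two applications of Serre duality on $S$ identify $\ho^{2}(\mathcal{F}|_{S})$ with $\Hom_{S}(\mathcal{F}|_{S},\omega_{S})^{\ast}$, a quotient of $\Hom_{S}(\mathcal{F}|_{S},\mathcal{O}_{S})^{\ast}\cong\ho^{2}(\mathcal{F}(-2,1)|_{S})$. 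Finally, tensoring the restriction sequence by $\mathcal{F}(-2,1)$ squeezes $\ho^{2}(\mathcal{F}(-2,1)|_{S})$ between $\ho^{2}(\mathcal{F}(-2,1))=0$ and $\ho^{3}(\mathcal{F}(-4,2))$, the latter a quotient of $\ho^{3}(\mathcal{F}(-4,1))=0$ (use the section of $\mathcal{O}(0,1)=\mathcal{O}(E)$); hence $\ho^{2}(\mathcal{F})=0$.

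For the last vanishing $\ho^{2}(\mathcal{F}(-1,0))=0$ I would use the ruled structure $pr:\bp3=\mathbb{P}(\mathcal{O}_{\p2}\oplus\mathcal{O}_{\p2}(1))\to\p2$ and the bundle $\Omega^{1}:=pr^{\ast}\Omega^{1}_{\p2}$, which sits in the pullback of the Euler sequence $0\to\Omega^{1}\to\mathcal{O}(-1,1)^{\oplus 3}\to\mathcal{O}_{\bp3}\to0$ and in its twisted dual $0\to\mathcal{O}(-3,3)\to\mathcal{O}(-2,2)^{\oplus 3}\to\Omega^{1}\to0$. Tensoring the first by $\mathcal{F}(0,-1)$ and using $\ho^{2}(\mathcal{F}(0,-1))=0$ (Definition~\ref{Instanton-def}(iii)) presents $\ho^{2}(\mathcal{F}(-1,0))^{\oplus 3}$ as a quotient of $\ho^{2}(\Omega^{1}\otimes\mathcal{F}(0,-1))$; tensoring the second by $\mathcal{F}(0,-1)$ squeezes $\ho^{2}(\Omega^{1}\otimes\mathcal{F}(0,-1))$ between $\ho^{2}(\mathcal{F}(-2,1))^{\oplus 3}=0$ (Definition~\ref{Instanton-def}(ii)) and $\ho^{3}(\mathcal{F}(-3,2))$, a quotient of $\ho^{3}(\mathcal{F}(-4,1))=0$ (use a section of $\mathcal{O}(1,1)$). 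So $\ho^{2}(\Omega^{1}\otimes\mathcal{F}(0,-1))=0$ and the claim follows. The step I expect to require the most care is $\ho^{2}(\mathcal{F})=0$: it is the only one passing through duality on the surface $S$, so one must pin down $\omega_{S}$ by adjunction, invoke Serre duality on $S$ twice in the correct direction, and observe that every restriction/twist sequence used stays exact for any $S$ precisely because $\mathcal{F}$ and its twists are torsion-free; the rest is organized bookkeeping with the effectivity statements of Fact~2.
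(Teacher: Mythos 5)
Your proposal is correct and follows essentially the same route as the paper: the twist-by-sections argument for items (i) and (ii), restriction to a Fano surface $S\in|2H-E|$ with two applications of Serre duality for $\ho^{2}(\mathcal{F})=0$, and the two $\Omega^{1}=pr^{\ast}\Omega^{1}_{\p2}$ sequences for $\ho^{2}(\mathcal{F}(-1,0))=0$. You are somewhat more explicit than the paper on a few points it leaves tacit (which line bundles $\mathcal{L}(2,-1)$ are effective, why $\ho^{3}(\mathcal{F}(-4,2))$ and $\ho^{3}(\mathcal{F}(-3,2))$ vanish via sections of $\mathcal{O}(0,1)$ and $\mathcal{O}(1,1)$, and the existence of a section of $\omega_{S}^{-1}$), but the underlying argument is identical.
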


In \cite[Theorem 8]{AO} Ancona and Ottaviani, proved the following 
\begin{theorem}\label{AO}
Let $\mathcal{E}:=\bigoplus_{i} \mathcal{O}_{\pn}(a_{i})$ on $\pn.$ with $a_{i}\geq0,$ and for $X=\mathbb{P}(\mathcal{E})\stackrel{pr}{\to}\pn,$ let $\mathcal{U}_{rel},$ $Q$ be, respectively, the relative universal and quotient sheaf. Then any complex $\mathcal{F}^{\bullet},$ of coherent sheaves, is obtained as the cohomology of the complex $\mathcal{C}^{\bullet}_{F}$ given by 
$$\mathcal{C}^{p}_{F}=\bigoplus_{s-i=p}\bigoplus_{q+h}\mathbb{H}^{s}(pr^{\ast}\mathcal{O}_{\pn}(-q)\otimes\mathcal{U}_{rel}(-h)\otimes\mathcal{F}^{\bullet})\otimes pr^{\ast}\Omega^{p}_{\pn}(p)\otimes\Lambda^{h}Q^{\vee}. $$
\end{theorem}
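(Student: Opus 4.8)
The plan is to deduce this from a resolution of the diagonal $\Delta_{X}\subset X\times X$, in the spirit of Beilinson's resolution of the diagonal on $\pn$; the point is that such a resolution can be built by splicing the absolute Beilinson resolution on the base $\pn$ with the relative Koszul resolution coming from the tautological sequences of the projective bundle $pr$. Write $p_{1},p_{2}\colon X\times X\to X$ for the two projections. Since $pr$ is flat, so is $pr\times pr\colon X\times X\to\pn\times\pn$, and hence the Beilinson resolution of $\oh_{\Delta_{\pn}}$ on $\pn\times\pn$ --- whose term in homological degree $q$ is $\op{n}(-q)\boxtimes\Omega^{q}_{\pn}(q)$ --- pulls back along $pr\times pr$ to a locally free resolution of $(pr\times pr)^{\ast}\oh_{\Delta_{\pn}}=\oh_{X\times_{\pn}X}$, viewed as a sheaf on $X\times X$.

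Next I would resolve the relative diagonal. Inside the fibre product $X\times_{\pn}X$ the relative diagonal --- which, as a closed subscheme of $X\times X$, is exactly $\Delta_{X}$ --- is the zero scheme of the canonical section of a rank-$(\mathrm{rk}\,\mathcal{E}-1)$ bundle on $X\times_{\pn}X$ built from the relative tautological sequences of the two factors. This section is regular, its zero locus having the expected codimension $\mathrm{rk}\,\mathcal{E}-1$, so its Koszul complex is a locally free resolution of $\oh_{\Delta_{X}}$ over $\oh_{X\times_{\pn}X}$, with term in homological degree $h$ involving $\mathcal{U}_{rel}(-h)$ on one factor and $\Lambda^{h}Q^{\vee}$ on the other, in the notation of the statement. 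Here the splitting hypothesis $\mathcal{E}=\bigoplus_{i}\op{n}(a_{i})$ is what makes these exterior powers --- and hence eventually the cohomology groups --- explicit, while the normalisation $a_{i}\ge0$ is harmless, since $\mathbb{P}(\mathcal{E})$ depends on $\mathcal{E}$ only up to twist; it merely arranges that $\oh_{X}(1)$ is globally generated.

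The crucial technical point is $\tor$-independence: because $X\times_{\pn}X=(pr\times pr)^{-1}(\Delta_{\pn})$ and $pr\times pr$ is flat, the tensor product over $\oh_{X\times X}$ of the pulled-back Beilinson complex with the relative Koszul complex is again a resolution. Its total complex $\mathcal{K}^{\bullet}$ is then a bounded locally free resolution of $\oh_{\Delta_{X}}$ on $X\times X$ whose term in homological degree $i$ is the direct sum, over $q+h=i$, of $p_{1}^{\ast}\big(pr^{\ast}\op{n}(-q)\otimes\mathcal{U}_{rel}(-h)\big)\otimes p_{2}^{\ast}\big(pr^{\ast}\Omega^{q}_{\pn}(q)\otimes\Lambda^{h}Q^{\vee}\big)$.

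It then remains to convolve with $\mathcal{F}^{\bullet}$. Since $\oh_{\Delta_{X}}$ represents the identity functor, $\mathcal{F}^{\bullet}\simeq\mathrm{R}p_{2\ast}\big(p_{1}^{\ast}\mathcal{F}^{\bullet}\otimes^{\mathbf{L}}\oh_{\Delta_{X}}\big)$ in $D^{\flat}(X)$; substituting $\mathcal{K}^{\bullet}$ for $\oh_{\Delta_{X}}$ and applying the projection formula termwise identifies the contribution of a pair $(q,h)$ with $\mathrm{R}\Gamma\big(X,pr^{\ast}\op{n}(-q)\otimes\mathcal{U}_{rel}(-h)\otimes\mathcal{F}^{\bullet}\big)\otimes_{\mathbb{C}}\big(pr^{\ast}\Omega^{q}_{\pn}(q)\otimes\Lambda^{h}Q^{\vee}\big)$. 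The filtration of $\mathcal{K}^{\bullet}$ by homological degree then produces a spectral sequence converging to the cohomology of $\mathcal{F}^{\bullet}$ whose $E_{1}$-page, reassembled into a single complex, is precisely $\mathcal{C}^{\bullet}_{F}$: the hypercohomology summand $\mathbb{H}^{s}$ feeds into $\mathcal{C}^{p}_{F}$ exactly when $s-i=p$ with $i=q+h$. This gives the claimed presentation of $\mathcal{F}^{\bullet}$. I expect the main obstacle to be the diagonal step itself --- writing the relative Koszul complex correctly in the projective-bundle setting and verifying the $\tor$-independence above; once that resolution is available, what remains is the routine bookkeeping that turns a resolution of the diagonal into a Beilinson-type spectral sequence.
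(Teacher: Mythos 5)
First, a point of comparison: the paper itself offers no proof of this statement --- it is imported verbatim as \cite[Theorem 8]{AO} --- so there is no internal argument to measure yours against, and I can only assess the proposal on its own terms. Your overall strategy is the standard and correct route to such canonical resolutions on projective bundles, and it is in the spirit of Ancona--Ottaviani's own construction: resolve $\oh_{\Delta_{X}}$ on $X\times X$ by splicing the $(pr\times pr)$-pullback of the Beilinson resolution of $\oh_{\Delta_{\pn}}$ (which, by flatness, resolves $\oh_{X\times_{\pn}X}$) with the relative Koszul resolution of $\oh_{\Delta_{X}}$ over $\oh_{X\times_{\pn}X}$, then convolve with $\mathcal{F}^{\bullet}$ via the identity Fourier--Mukai kernel and the projection formula. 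The intermediate steps you flag --- regularity of the canonical section cutting out the relative diagonal, and lifting the Koszul differentials across the termwise resolutions to obtain a genuine total complex on $X\times X$ --- are standard and work as you describe (and in the rank-two case actually used in this paper the relative Koszul complex has length one, so that part is immediate). You have also tacitly corrected two typos in the statement ($\Omega^{p}_{\pn}(p)$ should be $\Omega^{q}_{\pn}(q)$, and the inner sum is over $q+h=i$), which is the right reading.

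The genuine gap is your final sentence: ``the $E_1$-page, reassembled into a single complex, is precisely $\mathcal{C}^{\bullet}_{F}$.'' A filtered complex gives a spectral sequence with the stated $E_1$-terms converging to $\mathcal{H}^{\bullet}(\mathcal{F}^{\bullet})$; it does not by itself produce a single complex with those terms whose cohomology is $\mathcal{H}^{\bullet}(\mathcal{F}^{\bullet})$. If you equip $\mathcal{C}^{p}_{F}=\bigoplus_{s-i=p}E_{1}^{-i,s}$ only with the differentials induced by $d_{1}$, its cohomology is the totalized $E_{2}$-page, not the abutment; the higher differentials $d_{2},d_{3},\dots$ must be packaged into the differential of $\mathcal{C}^{\bullet}_{F}$, and the existence of such a packaging is exactly the content of the ``strong form'' of Beilinson's theorem, which requires an argument. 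The standard repair is to push forward a Cartan--Eilenberg (or \v{C}ech) resolution of $p_{1}^{\ast}\mathcal{F}^{\bullet}\otimes\mathcal{K}^{\bullet}$ along $p_{2}$: the total complex of the resulting double complex is quasi-isomorphic to $\mathcal{F}^{\bullet}$, each column is formal because $Rp_{2\ast}(p_{1}^{\ast}G\otimes p_{2}^{\ast}B)\cong\bigoplus_{s}\ho^{s}(G)\otimes B[-s]$ splits as a complex of sheaves, and one then invokes the double-complex lemma used by Ancona--Ottaviani (or the analogous step in Okonek--Schneider--Spindler's treatment of Beilinson's theorem) to replace the total complex by one with exactly the terms $\mathcal{C}^{p}_{F}$. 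Without some version of this step your argument establishes only the spectral-sequence form of the theorem, which is strictly weaker than the statement being proved and than what is used to produce the monad \eqref{ccgm-like-monad}.
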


Since the polarization of a projective bundle is determined up to a twist, by a line bundle, in our case, the quotient bundle is the rank $1$ bundle $\mathcal{O}_{\bp3}(1,0)$ and the relative universal bundle is the line bundle $\mathcal{O}_{\bp3}(0,1).$ As in \cite{ccgm}, this is du to the difference of the adopted notation and it can be obtained directly from the tautological sequences, \cite[Ch. 9 \S1]{EH},

$$0\to T_{X/Y}\to T\bp3\to pr^{\ast} T\p2\to0$$
in which $T_{X/Y}=\mathcal{H}om(\mathcal{U}_{rel}, Q)$ 

$$0\to\mathcal{U}_{rel}\to pr^{\ast}(\mathcal{O}_{\p2}\oplus\mathcal{O}_{\p2}(1))\to Q\to0.$$

\vspace{0.5cm}

\begin{corollary}
An instanton sheaf $\mathcal{F}$  as in definition \ref{Instanton-def}, is the middle 
cohomology of a monad $\mathcal{M}^{\bullet}$ given by   

\begin{equation}\label{ccgm-like-monad}
0\to\begin{array}{c} \mathcal{O}_{\bp3}(-1,1)^{\oplus (l+\gamma)} \\ \oplus \\ \Omega^{1}(0,-1)^{\oplus (k-l)} \end{array} \stackrel{\alpha}{\to} 
\begin{array}{c} \mathcal{O}_{\bp3}(-1,1)^{\oplus\gamma } \\ \oplus \\ \Omega^{1} (1,-1)^{\oplus k} \\ \oplus \\ \mathcal{O}_{\bp3}(-1,0)^{\oplus 2(k-l)} \end{array} 
\stackrel{\beta}{\to}\mathcal{O}_{\bp3}^{\oplus (2k-l-r)}\to0; 
\end{equation}
where $2k-l\geq r,$ $k-l\geq0$ and $\gamma:=\ho^{2}(\mathcal{F}(-2,2)).$ 
Moreover, the map $\alpha$ is injective as a sheaf map and $$Supp(Sing(\mathcal{F}))=\{x\in\bp3|\alpha(x)\textnormal{ not injective }\}.$$
\end{corollary}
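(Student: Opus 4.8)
The plan is to apply the Ancona--Ottaviani resolution (Theorem \ref{AO}) to the complex consisting of $\mathcal F$ placed in degree $0$, using the presentation of $\bp3$ as $\mathbb P(\oh_{\p2}\oplus\oh_{\p2}(1))\to\p2$ with quotient bundle $\oh_{\bp3}(1,0)$ and relative universal bundle $\oh_{\bp3}(0,1)$. First I would assemble all the hypercohomology groups $\hh^s(\mathrm{pr}^\ast\oh_{\p2}(-q)\otimes\oh_{\bp3}(-h,0)\otimes\mathcal F)$ — here $\mathcal U_{rel}(-h)=\oh_{\bp3}(0,-h)$ and $\Lambda^h Q^\vee=\oh_{\bp3}(-h,0)$ for $h=0,1$ — that can contribute to the terms $\mathcal C^p_F$ with $p\in\{-1,0\}$, i.e.\ to a two-term complex. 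The bookkeeping amounts to: (a) rewriting each relevant twist $\oh_{\p2}(-q)$ pulled back and tensored with $\oh_{\bp3}(0,-h)$ and $\mathrm{pr}^\ast\Omega^p_{\p2}(p)\otimes\Lambda^h Q^\vee$ in terms of the line bundles $\oh_{\bp3}(a,b)$ and the pulled-back cotangent bundle $\Omega^1=\mathrm{pr}^\ast\Omega^1_{\p2}$; and (b) showing that the vanishings in Definition \ref{Instanton-def} and Lemma \ref{vanishing} kill every cohomology group except those producing the four summand types $\oh_{\bp3}(-1,1)$, $\Omega^1(1,-1)$, $\Omega^1(0,-1)$, $\oh_{\bp3}(-1,0)$ and the target $\oh_{\bp3}^{\oplus(2k-l-r)}$. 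The ranks $l,k,\gamma$ get identified: $\gamma=h^2(\mathcal F(-2,2))$ directly reads off one hypercohomology group, $l,k$ come from $h^2(\mathcal F)$-type and $h^1(\mathcal F(\cdot))$-type groups, and the Euler characteristic computation of Lemma \ref{vanishing}(i), together with $\chi$ of the monad terms, forces the multiplicities and the inequalities $2k-l\ge r$, $k-l\ge 0$ (the latter because the multiplicity $k-l$ of a summand must be $\ge0$).

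Once the shape \eqref{ccgm-like-monad} is established, $\mathcal F$ is the degree-$0$ cohomology sheaf of $\mathcal M^\bullet$; it remains to prove $\alpha$ is injective as a sheaf map (so $\mathcal M^\bullet$ is a genuine monad, i.e.\ $\mathcal H^{-1}(\mathcal M^\bullet)=0$) and that $\mathrm{Supp}(\mathrm{Sing}(\mathcal F))=\{x:\alpha(x)\text{ not injective}\}$. Injectivity of $\alpha$ at the generic point follows because $\mathcal F$ is torsion-free of rank $r$: the alternating sum of ranks in the complex is $r$, and $\ker\alpha$ is a subsheaf of a locally free sheaf whose generic rank would have to be $0$ (a nonzero $\ker\alpha$ would force $\mathcal H^{-1}$ to be a nonzero torsion-free sheaf, but one checks from the construction that $\mathcal H^{-1}$ is a torsion sheaf, hence $0$) — more directly, $\mathcal H^{-1}(\mathcal M^\bullet)$ is a subsheaf of the first term, hence torsion-free, while being supported where $\alpha$ drops rank, which is a proper closed subset; a torsion-free sheaf supported on a proper closed subset is zero. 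For the singularity locus, I would use that $\mathcal H^0(\mathcal M^\bullet)=\mathcal F$ together with the fact that away from $\{x:\alpha(x)\text{ not injective}\}$ the monad is a complex of vector bundles with $\alpha$ a subbundle inclusion, so its cohomology $\mathcal F$ is locally free there; conversely, at a point where $\alpha(x)$ fails to be injective, breaking $\mathcal M^\bullet$ into $\ker\beta$ and $\mathrm{coker}\,\alpha=\mathcal F$ and computing $\mathcal Ext$'s (or simply using that the cokernel of a map of bundles that drops rank along a locus of the expected dimension is non-locally-free there, combined with the local structure of $\mathcal F$ as $\mathrm{coker}(\alpha)$ inside a vector bundle) shows $\mathcal F$ cannot be locally free at $x$.

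The main obstacle I anticipate is step (b) of the bookkeeping: one must verify that \emph{all} the a priori possible hypercohomology groups contributing to $\mathcal C^p_F$ for $p\le-2$ and $p\ge 1$ vanish, so that the complex is concentrated in the two middle degrees. This is not automatic from the six listed vanishings; it will require the derived vanishings packaged in Lemma \ref{van1}, Lemma \ref{van2} and Proposition \ref{van1}, applied after pushing forward along $\mathrm{pr}$ and using the two exact sequences \eqref{omega1}, \eqref{omega2} that resolve $\Omega^1$ and its twists by line bundles in the exceptional sequence \eqref{ex-seq}. Concretely, each $\hh^s(\mathrm{pr}^\ast\oh_{\p2}(-q)\otimes\oh_{\bp3}(0,-h)\otimes\mathcal F)$ is computed via the Leray spectral sequence for $\mathrm{pr}$, and $R^i\mathrm{pr}_\ast$ of the relevant line bundles is controlled by Proposition \ref{van1}(i); feeding these into the monad terms and matching against $\mathrm{pr}^\ast\Omega^p_{\p2}(p)$ for $p=0,1,2$ is where the computation is heaviest. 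A secondary subtlety is making sure the summand $\oh_{\bp3}(-1,1)^{\oplus(l+\gamma)}$ on the left and $\oh_{\bp3}(-1,1)^{\oplus\gamma}$ on the right are genuinely different contributions and not cancelled against each other at the level of the complex — this is guaranteed because they sit in different homological degrees $s-i=p$, so no differential identifies them, but it should be stated carefully.
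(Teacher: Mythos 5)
Your proposal follows the paper's own proof: the monad shape is obtained by a direct application of Theorem \ref{AO} combined with the vanishings of Lemma \ref{vanishing}, and the identification of $\mathrm{Supp}(\mathrm{Sing}(\mathcal{F}))$ with the non-injectivity locus of $\alpha$ is carried out, exactly as in the paper, by reading off $\mathcal{E}xt^{1}(\mathcal{F},\mathcal{O}_{\bp3})$ (with $\mathcal{E}xt^{2,3}(\mathcal{F},\mathcal{O}_{\bp3})=0$) from the length-one locally free resolution $0\to\mathcal{M}^{-1}\to\ker\beta\to\mathcal{F}\to0$ furnished by the display \eqref{display}, plus Nakayama. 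The only correction: the complex must be concentrated in degrees $-1,0,1$ (not $-1,0$), so the vanishing you need to verify is for $p\le-2$ and $p\ge2$; with that fixed, the bookkeeping you describe is precisely what the paper's one-line ``direct application'' leaves implicit.
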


We recall that the display of such a monad is the following diagram
\begin{equation}\label{display}
\xymatrix@R-1pc{
& 0\ar[d]&0\ar[d] & &  \\
& \mathcal{M}^{-1}\ar[d]\ar@{=}[r]&\mathcal{M}^{-1}\ar[d]^{\alpha} & &  \\
0\ar[r]& K \ar[r]\ar[d] &\mathcal{M}^{0}\ar[r]^{\beta}\ar[d] &\ar[r]\ar@{=}[d] \mathcal{M}^{1}&0  \\
0\ar[r]&\mathcal{F}\ar[r]\ar[d] &C\ar[r]\ar[d] &\mathcal{M}^{1}  \ar[r] &0  \\
& 0&0 & &  \\
}
\end{equation}

where we use the notation $$\mathcal{M}^{-1}:=\begin{array}{c} \mathcal{O}_{\bp3}(-1,1)^{\oplus (l+\gamma)} \\ \oplus \\ \Omega^{1}(0,-1)^{\oplus (k-l)} \end{array},\quad
\mathcal{M}^{0}:=\begin{array}{c} \mathcal{O}_{\bp3}(-1,1)^{\oplus\gamma } \\ \oplus \\ \Omega^{1} (1,-1)^{\oplus k} \\ \oplus \\ \mathcal{O}_{\bp3}(-1,0)^{\oplus 2(k-l)} \end{array}, \quad \mathcal{M}^{1}:=\mathcal{O}_{\bp3}^{\oplus (2k-l-r)};$$ $K:=\ker(\beta)$ and $C:=\coker(\alpha).$ 

\begin{proof}
The proof is a direct application of Theorem \ref{AO} by using the vanishings in lemma \ref{vanishing}.
Recall that $Supp(Sing(\mathcal{F}))=\bigcup_{1\leq p\leq3} Supp(\mathcal{E}xt^{p}(\mathcal{F},\mathcal{O}_{\bp3})).$ Dualizing the left column of the display \ref{display}, one has the exact sequence
$$0\to\mathcal{F}^{\vee}\to K^{\vee}\stackrel{\alpha^{\vee}}{\to} (\mathcal{M}^{-1})^{\vee} \to \mathcal{E}xt^{1}(\mathcal{F},\mathcal{O}_{\bp3})\to0.$$ and $\mathcal{E}xt^{2,3}(\mathcal{F},\mathcal{O}_{\bp3})=0.$ Thus, in our case we have $$Supp(Sing(\mathcal{F}))=Supp(\mathcal{E}xt^{1}(\mathcal{F},\mathcal{O}_{\bp3})),$$ Moreover, from the exact sequence above, we see that $x\in Supp(Sing(\mathcal{F}))$ if, and only if, at the level of stalks, the map $K^{\vee}_{x}\stackrel{\alpha^{\vee}_{x}}{\to} (\mathcal{M}^{-1})^{\vee}_{x}$ fails to be surjective. Hence, this occurs if, and only if, the map at the level of fibers $K^{\vee}(x)\stackrel{\alpha^{\vee}(x)}{\to} (\mathcal{M}^{-1})^{\vee}(x)$ is not surjective, since both $K^{\vee}$ and $(\mathcal{M}^{-1})^{\vee}$ are locally free. In other words, $Supp(Sing(\mathcal{F}))$ is the locus where $\alpha$ fails to be injective.
\end{proof}


The main result of this section is a generalization of \cite[Main Theorem]{JG} and \cite[Theorem 3.4]{henni} in the case of the torically fixed rank $2$ instantons. We shall prove the following:

\begin{theorem}\label{main1}
If $\mathcal{F}$ is a strictly torsion-free rank $2$ instanton sheaf , then
\begin{itemize}
    \item[(i)]$\mathcal{F}^{\vee}\cong\mathcal{F}^{\vee\vee}$ and both are locally free; 
    \item[(ii)] The singularity locus of $\mathcal{F}$ is of pure dimension $1;$
    \item[(iii)] The sheaves $\mathcal{E}xt^{1}(\mathcal{F},\mathcal{O}_{\bp3})$ and $\mathcal{F}^{\vee\vee}/\mathcal{F}$ are both rank-zero instanton sheaves of charge $c_{2}(\mathcal{F})-c_{2}(\mathcal{F}^{\vee\vee})$ and satisfy $$\mathcal{E}xt^{2}(\mathcal{E}xt^{1}(\mathcal{F},\mathcal{O}_{\bp3}),\omega_{\bp3})\cong (\mathcal{F}^{\vee\vee}/\mathcal{F})\otimes\omega_{\bp3}.$$
\end{itemize}

\end{theorem}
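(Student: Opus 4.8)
\textit{Overall strategy.} I will bootstrap the whole statement from Definition~\ref{Instanton-def}, the monad of Corollary~\ref{ccgm-like-monad}, and two structural facts special to this setup: the numerical coincidence $\omega_{\bp3}\otimes\mathcal{O}_{\bp3}(2,-1)\cong\mathcal{O}_{\bp3}(-2,1)$, and the self-duality $\mathcal{R}^{\vee}\cong\mathcal{R}$ enjoyed by every rank $2$ reflexive sheaf $\mathcal{R}$ with $c_{1}=0$ on the smooth, hence locally factorial, threefold $\bp3$. Throughout put $\mathcal{Q}:=\mathcal{F}^{\vee\vee}/\mathcal{F}$; since $\mathcal{F}$ is torsion-free it is locally free in codimension $1$, so $\mathcal{Q}$ is supported in dimension $\le 1$, and $\ho^{i}(\mathcal{Q}\otimes\mathcal{L})=0$ for $i\ge 2$ and any line bundle $\mathcal{L}$.

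\textit{Part (i).} The isomorphism $\mathcal{F}^{\vee}\cong\mathcal{F}^{\vee\vee}$ is formal: $\mathcal{F}^{\vee}$ is reflexive, so $\mathcal{F}^{\vee\vee\vee}=(\mathcal{F}^{\vee})^{\vee\vee}=\mathcal{F}^{\vee}$, while $\mathcal{F}^{\vee\vee}$ is rank $2$ reflexive with $c_{1}=0$, whence $\mathcal{F}^{\vee\vee\vee}\cong\mathcal{F}^{\vee\vee}$. To prove $\mathcal{F}^{\vee\vee}$ is locally free I apply Theorem~\ref{almost-roggero}, so it suffices to show $\ho^{1}(\mathcal{F}^{\vee\vee}(-2,1))=\ho^{2}(\mathcal{F}^{\vee\vee}(-2,1))=0$. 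Twisting $0\to\mathcal{F}\to\mathcal{F}^{\vee\vee}\to\mathcal{Q}\to0$ by $\mathcal{O}_{\bp3}(-2,1)$ and using $\ho^{2}(\mathcal{F}(-2,1))=0$ (Definition~\ref{Instanton-def}(ii)) together with $\ho^{2}(\mathcal{Q}(-2,1))=0$ gives $\ho^{2}(\mathcal{F}^{\vee\vee}(-2,1))=0$ at once. For the remaining group, Serre duality and the coincidence above give $\ho^{1}(\mathcal{F}^{\vee\vee}(-2,1))^{\ast}\cong\ext^{2}(\mathcal{F}^{\vee\vee},\mathcal{O}_{\bp3}(-2,1))$; since a rank $2$ reflexive sheaf has $\mathcal{E}xt^{q}(\mathcal{F}^{\vee\vee},\mathcal{O}_{\bp3})=0$ for $q\ge 2$ and $\mathcal{E}xt^{1}(\mathcal{F}^{\vee\vee},\mathcal{O}_{\bp3})$ zero-dimensional, the local-to-global spectral sequence degenerates and exhibits $\ext^{2}(\mathcal{F}^{\vee\vee},\mathcal{O}_{\bp3}(-2,1))$ as a quotient of $\ho^{2}(\mathcal{H}om(\mathcal{F}^{\vee\vee},\mathcal{O}_{\bp3}(-2,1)))=\ho^{2}(\mathcal{F}^{\vee\vee}(-2,1))=0$, where I used $(\mathcal{F}^{\vee\vee})^{\vee}\cong\mathcal{F}^{\vee\vee}$. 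Hence $\mathcal{F}^{\vee\vee}$, and therefore $\mathcal{F}^{\vee}\cong\mathcal{F}^{\vee\vee}$, is locally free.

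\textit{Parts (ii) and (iii).} Now that $\mathcal{F}^{\vee\vee}$ is a vector bundle, apply $\mathcal{H}om(-,\mathcal{O}_{\bp3})$ to $0\to\mathcal{F}\to\mathcal{F}^{\vee\vee}\to\mathcal{Q}\to0$ and pass to the long exact sequence of $\mathcal{E}xt$-sheaves. Using $\mathcal{E}xt^{\ge 1}(\mathcal{F}^{\vee\vee},\mathcal{O}_{\bp3})=0$ and $\mathcal{E}xt^{\ge 2}(\mathcal{F},\mathcal{O}_{\bp3})=0$ — the latter from the locally free resolution $0\to\mathcal{M}^{-1}\to K\to\mathcal{F}\to0$ visible in the display~\eqref{display} — one reads off $\mathcal{E}xt^{1}(\mathcal{Q},\mathcal{O}_{\bp3})=0$, an isomorphism $\mathcal{E}xt^{1}(\mathcal{F},\mathcal{O}_{\bp3})\cong\mathcal{E}xt^{2}(\mathcal{Q},\mathcal{O}_{\bp3})$, and $\mathcal{E}xt^{3}(\mathcal{Q},\mathcal{O}_{\bp3})\cong\mathcal{E}xt^{2}(\mathcal{F},\mathcal{O}_{\bp3})=0$. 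The vanishing of $\mathcal{E}xt^{3}(\mathcal{Q},\mathcal{O}_{\bp3})$ forces $\mathcal{Q}$ to have no zero-dimensional subsheaf, so $\mathcal{Q}$ (which is nonzero, $\mathcal{F}$ being strictly torsion-free) is pure of dimension $1$; dually, $\mathcal{E}xt^{2}(\mathcal{Q},\mathcal{O}_{\bp3})$ is the codimension-$2$ dual of a sheaf of dimension $\le 1$, hence pure of dimension $1$ and nonzero. Combined with the identification $\mathrm{Supp}(\mathrm{Sing}(\mathcal{F}))=\mathrm{Supp}(\mathcal{E}xt^{1}(\mathcal{F},\mathcal{O}_{\bp3}))$ from Corollary~\ref{ccgm-like-monad}, this gives (ii). For (iii), biduality for pure codimension-$2$ sheaves gives $\mathcal{E}xt^{2}(\mathcal{E}xt^{2}(\mathcal{Q},\omega_{\bp3}),\omega_{\bp3})\cong\mathcal{Q}$; combining this with $\mathcal{E}xt^{1}(\mathcal{F},\mathcal{O}_{\bp3})\cong\mathcal{E}xt^{2}(\mathcal{Q},\mathcal{O}_{\bp3})=\mathcal{E}xt^{2}(\mathcal{Q},\omega_{\bp3})\otimes\omega_{\bp3}^{-1}$ and unwinding twists yields $\mathcal{E}xt^{2}(\mathcal{E}xt^{1}(\mathcal{F},\mathcal{O}_{\bp3}),\omega_{\bp3})\cong\mathcal{Q}\otimes\omega_{\bp3}$. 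Additivity of the Chern character on the defining sequence gives $\ch_{2}(\mathcal{Q})=c_{2}(\mathcal{F})-c_{2}(\mathcal{F}^{\vee\vee})$, a cycle preserved by the codimension-$2$ duality, so both $\mathcal{Q}$ and $\mathcal{E}xt^{1}(\mathcal{F},\mathcal{O}_{\bp3})$ have the stated charge. Finally, to see they are rank-$0$ instanton sheaves (Definition~\ref{rk-0}): $\ho^{0}(\mathcal{Q}(-2,1))=0$ since a section would lift to $\ho^{0}(\mathcal{F}^{\vee\vee}(-2,1))$, which vanishes because $\mathcal{O}_{\bp3}(2,-1)$ has positive slope against the semistable bundle $\mathcal{F}^{\vee\vee}$ (semistability passes to the reflexive hull), while $\ho^{1}(\mathcal{Q}(-2,1))\cong\ho^{1}(\mathcal{F}^{\vee\vee}(-2,1))=0$ by part (i); and for $\mathcal{E}xt^{1}(\mathcal{F},\mathcal{O}_{\bp3})$ the analogous vanishings follow from Serre duality, the same coincidence $\omega_{\bp3}\otimes\mathcal{O}_{\bp3}(2,-1)\cong\mathcal{O}_{\bp3}(-2,1)$, and the collapse of the local-to-global spectral sequence (its $\mathcal{E}xt$-rows vanish outside degree $2$, by purity), which turn $\ho^{0,1}$ of the $(-2,1)$-twist of $\mathcal{E}xt^{1}(\mathcal{F},\mathcal{O}_{\bp3})$ into $\ho^{1,0}(\mathcal{Q}(-2,1))=0$.

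\textit{Main obstacle.} The one genuinely delicate point is the apparent circularity in part (i): applying Theorem~\ref{almost-roggero} to $\mathcal{F}^{\vee\vee}$ requires $\ho^{1}(\mathcal{F}^{\vee\vee}(-2,1))=0$, which looks as though it already presupposes that $\mathcal{F}^{\vee\vee}$ is a bundle. This is broken by noting that the $\mathcal{E}xt$-sheaves of a rank $2$ reflexive sheaf are small enough (vanishing in degrees $\ge 2$, zero-dimensional in degree $1$) that the local-to-global spectral sequence computing $\ext^{2}(\mathcal{F}^{\vee\vee},\mathcal{O}_{\bp3}(-2,1))$ degenerates to the honest cohomology group $\ho^{2}(\mathcal{F}^{\vee\vee}(-2,1))$, which is controlled directly by the sequence $0\to\mathcal{F}\to\mathcal{F}^{\vee\vee}\to\mathcal{Q}\to0$ and the hypotheses on $\mathcal{F}$. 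Everything after that is formal: the long exact sequence of $\mathcal{E}xt$-sheaves for $0\to\mathcal{F}\to\mathcal{F}^{\vee\vee}\to\mathcal{Q}\to0$ and codimension-$2$ biduality.
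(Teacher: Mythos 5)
Your proof is correct, but it follows a genuinely different route from the paper's. The paper dualizes the monad of Corollary \ref{ccgm-like-monad}, works with the display \eqref{d-display} of the dual complex, introduces the auxiliary sheaf $\mathcal{G}$ of \eqref{G-sheaf}, and extracts all the needed vanishings ($\ho^{1,2}(\mathcal{F}^{\vee}(-2,1))=0$, $\ho^{0,1}(\mathcal{E}xt^{1}(\mathcal{F},\mathcal{O}_{\bp3}(-2,1)))=0$, $\ho^{0,1}(\mathcal{Q}(-2,1))=0$) by long diagram chases through $\Omega^{1}$-twists and cohomologically trivial line bundles (Lemma \ref{v-vanish}); the isomorphism $\mathcal{E}xt^{2}(\mathcal{E}xt^{1}(\mathcal{F},\mathcal{O}_{\bp3}),\omega_{\bp3})\cong\mathcal{Q}\otimes\omega_{\bp3}$ and the purity statements then come from a snake-lemma diagram involving $\mathcal{G}$ together with \cite[Proposition 1.1.10]{Huy}. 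You instead organize everything around the single sequence $0\to\mathcal{F}\to\mathcal{F}^{\vee\vee}\to\mathcal{Q}\to0$: you get $\ho^{2}(\mathcal{F}^{\vee\vee}(-2,1))=0$ directly from it, convert $\ho^{1}(\mathcal{F}^{\vee\vee}(-2,1))$ into $\ext^{2}(\mathcal{F}^{\vee\vee},\mathcal{O}_{\bp3}(-2,1))$ by Serre duality and kill it with the local-to-global spectral sequence and the self-duality $(\mathcal{F}^{\vee\vee})^{\vee}\cong\mathcal{F}^{\vee\vee}$, and then run the $\mathcal{E}xt$-long exact sequence plus Cohen--Macaulay biduality for pure codimension-$2$ sheaves to obtain (ii) and (iii); the monad is used only for ${\rm hd}(\mathcal{F})\le 1$ and the identification of ${\rm Supp}({\rm Sing}(\mathcal{F}))$. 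Your version is shorter and, notably, makes explicit how the apparent circularity in (i) is broken, which the paper leaves implicit behind the reference to Remark \ref{rmk-on-def}; the paper's heavier computation buys extra vanishings (e.g.\ $\ho^{0}(\mathcal{F}^{\vee})=0$, $\ho^{1}(\mathcal{E}xt^{1}(\mathcal{F},\mathcal{O}_{\bp3}(-2,1)))=0$) that are reused elsewhere. The only points you should flag explicitly are (a) that a pure $1$-dimensional sheaf on a smooth threefold is Cohen--Macaulay of codimension $2$ (purity gives depth $=1=\dim$ at every point of the support), which is what licenses both the vanishing of $\mathcal{E}xt^{q}(\mathcal{Q},\mathcal{O}_{\bp3})$ for $q\ne 2$ and the biduality $\mathcal{E}xt^{2}(\mathcal{E}xt^{2}(\mathcal{Q},\omega_{\bp3}),\omega_{\bp3})\cong\mathcal{Q}$, and (b) that $\mu$-semistability passes from $\mathcal{F}$ to $\mathcal{F}^{\vee\vee}$ and that $\mathcal{O}_{\bp3}(2,-1)$ is the (ample) fundamental divisor, which is what kills $\ho^{0}(\mathcal{F}^{\vee\vee}(-2,1))$; neither is a gap, both are standard.
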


Item $(i)$ is the content of the last part of Remark \ref{rmk-on-def}, following from Theorem \ref{almost-roggero}, since the dual, and hence the double dual, of any sheaf is reflexive.  The proof for items $(ii)$ will be given at the end of this section, after showing some intermediate results. 

\begin{lemma}\label{v-vanish}
If $\mathcal{F}$ is a torsion free instanton sheaf on $\bp3$ then
\begin{itemize}
    \item[(i)] $\ho^{1}(\mathcal{F}^{\vee}(-2,1))=\ho^{0}(\mathcal{F}^{\vee})=0,$ 
    \item[(ii)] $\ho^{0}(\mathcal{E}xt^{1}( \mathcal{F},\mathcal{O}_{\bp3}(-2,1)))=\ho^{1}(\mathcal{E}xt^{1}( \mathcal{F},\mathcal{O}_{\bp3}(-2,1)))=0,$ and \\  $\ho^{2}(\mathcal{F}^{\vee}(-2,1))=0,$
    \item[(iii)]$\ho^{0,1}((\mathcal{F}^{\vee\vee}/\mathcal{F})(-2,1))=0.$
\end{itemize}
\end{lemma}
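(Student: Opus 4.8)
The overall strategy is to push all three vanishings down to statements about the torsion-free sheaf $\mathcal{F}$ itself — where they are either hypotheses of Definition \ref{Instanton-def} or have already been established in Lemma \ref{vanishing} — by combining Grothendieck–Serre duality with the local-to-global spectral sequence $E_{2}^{p,q}=\ho^{p}(\mathcal{E}xt^{q}(-,\mathcal{O}_{\bp3}))\Rightarrow\ext^{p+q}(-,\mathcal{O}_{\bp3})$. The structural input that makes this work is that, by the monad of Corollary \ref{ccgm-like-monad}, $\mathcal{E}xt^{2}(\mathcal{F},\mathcal{O}_{\bp3})=\mathcal{E}xt^{3}(\mathcal{F},\mathcal{O}_{\bp3})=0$, so for any twist $t$ the spectral sequence for $\ext^{\bullet}(\mathcal{F}\otimes\mathcal{O}_{\bp3}(-t),\mathcal{O}_{\bp3})$ has only the two rows $E_{2}^{p,0}=\ho^{p}(\mathcal{F}^{\vee}(t))$ and $E_{2}^{p,1}=\ho^{p}(\mathcal{E}xt^{1}(\mathcal{F},\mathcal{O}_{\bp3})(t))$; I will also use freely that $\mathcal{F}^{\vee}\cong\mathcal{F}^{\vee\vee}$ is locally free (Theorem \ref{main1}(i)), so that duality applies to it with no local correction terms.

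For part (i): writing $\mathcal{F}^{\vee}(-2,1)=\mathcal{H}om(\mathcal{F}(2,-1),\mathcal{O}_{\bp3})$, all differentials touching $E_{2}^{1,0}$ vanish (neighbouring entries have negative row or column index), so $\ho^{1}(\mathcal{F}^{\vee}(-2,1))=E_{2}^{1,0}=E_{\infty}^{1,0}\hookrightarrow\ext^{1}(\mathcal{F}(2,-1),\mathcal{O}_{\bp3})\cong\ho^{2}(\mathcal{F}(-2,1))^{\ast}=0$ by Definition \ref{Instanton-def}(ii). The vanishing $\ho^{0}(\mathcal{F}^{\vee})=0$ is the only non-formal point: a nonzero section of $\mathcal{F}^{\vee}$ is a nonzero morphism $\mathcal{F}\to\mathcal{O}_{\bp3}$, and since $c_{1}(\mathcal{F})=0$ its image is a proper quotient of $\mathcal{F}$ which embeds into $\mathcal{O}_{\bp3}$ and so has slope $0$, i.e. equals some ideal sheaf $\mathcal{I}_{W}$; a surjection $\mathcal{F}\twoheadrightarrow\mathcal{I}_{W}$ contradicts $\mu$-stability of $\mathcal{F}$ outright, and in the merely semistable case one combines it with $\ho^{0}(\mathcal{F})=0$ and the analysis of $0\to\mathcal{O}_{\bp3}\to\mathcal{F}^{\vee}\to\mathcal{I}_{Z}\to0$ — obtained by dualizing and using $\mathcal{E}xt^{1}(\mathcal{F}^{\vee},\mathcal{O}_{\bp3})=0$ — to force first $\mathcal{F}^{\vee\vee}\cong\mathcal{O}_{\bp3}^{\oplus2}$ and then $\mathcal{F}\cong\mathcal{O}_{\bp3}^{\oplus2}$, against $\ho^{0}(\mathcal{F})=0$.

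For part (ii): by Serre duality and local freeness of $\mathcal{F}^{\vee}$ one has $\ho^{2}(\mathcal{F}^{\vee}(-2,1))\cong\ho^{1}(\mathcal{F}^{\vee\vee}(-2,1))^{\ast}=\ho^{1}(\mathcal{F}^{\vee}(-2,1))^{\ast}$, which vanishes by part (i). For the two $\mathcal{E}xt$-vanishings (recall $\mathcal{E}xt^{1}(\mathcal{F},\mathcal{O}_{\bp3}(-2,1))=\mathcal{E}xt^{1}(\mathcal{F},\mathcal{O}_{\bp3})(-2,1)$), return to the same two-row spectral sequence: from $\ext^{1}(\mathcal{F}(2,-1),\mathcal{O}_{\bp3})=0$ one gets $E_{\infty}^{0,1}=0$, so $d_{2}$ embeds $\ho^{0}(\mathcal{E}xt^{1}(\mathcal{F},\mathcal{O}_{\bp3})(-2,1))=E_{2}^{0,1}$ into $E_{2}^{2,0}=\ho^{2}(\mathcal{F}^{\vee}(-2,1))=0$; and from $\ext^{2}(\mathcal{F}(2,-1),\mathcal{O}_{\bp3})\cong\ho^{1}(\mathcal{F}(-2,1))^{\ast}=0$ one gets $E_{\infty}^{1,1}=0$, so $d_{2}$ embeds $\ho^{1}(\mathcal{E}xt^{1}(\mathcal{F},\mathcal{O}_{\bp3})(-2,1))$ into $\ho^{3}(\mathcal{F}^{\vee}(-2,1))\cong\ho^{0}(\mathcal{F}^{\vee}(-2,1))^{\ast}$, with $\ho^{0}(\mathcal{F}^{\vee}(-2,1))\cong\ho^{3}(\mathcal{F}(-2,1))^{\ast}=0$ by Lemma \ref{vanishing}(ii) (note $\mathcal{O}_{\bp3}(-2,1)$ is the leading member of the exceptional sequence \eqref{ex-seq}).

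Finally, part (iii) is immediate from the long exact sequence of $0\to\mathcal{F}(-2,1)\to\mathcal{F}^{\vee\vee}(-2,1)\to(\mathcal{F}^{\vee\vee}/\mathcal{F})(-2,1)\to0$: $\ho^{0}((\mathcal{F}^{\vee\vee}/\mathcal{F})(-2,1))$ is a quotient of $\ho^{0}(\mathcal{F}^{\vee\vee}(-2,1))=\ho^{0}(\mathcal{F}^{\vee}(-2,1))\cong\ho^{3}(\mathcal{F}(-2,1))^{\ast}=0$, and $\ho^{1}((\mathcal{F}^{\vee\vee}/\mathcal{F})(-2,1))$ is sandwiched between $\ho^{1}(\mathcal{F}^{\vee\vee}(-2,1))=\ho^{1}(\mathcal{F}^{\vee}(-2,1))=0$ (part (i)) and $\ho^{2}(\mathcal{F}(-2,1))=0$ (Definition \ref{Instanton-def}(ii)). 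The main obstacle is thus isolated in part (i), namely $\ho^{0}(\mathcal{F}^{\vee})=0$: unlike everything else here it is not a formal consequence of duality together with the standing hypotheses, and it genuinely rests on the (semi)stability of $\mathcal{F}$, so it is the step to argue with care; once it is in hand the rest follows mechanically.
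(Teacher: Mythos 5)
Your treatment of part (iii), part (ii), and the first vanishing in part (i) is correct and takes a genuinely different route from the paper: where the paper chases cohomology through the display \eqref{d-display} of the dualized monad and the auxiliary sheaf $\mathcal{G}$ of \eqref{G-sheaf}, you run the two-row local-to-global spectral sequence for $\ext^{\bullet}(\mathcal{F}(2,-1),\mathcal{O}_{\bp3})$ against Grothendieck--Serre duality, reducing everything to the vanishings $\ho^{1}(\mathcal{F}(-2,1))=\ho^{2}(\mathcal{F}(-2,1))=\ho^{3}(\mathcal{F}(-2,1))=0$ that are already available. This is shorter and more transparent, and the one structural input it needs, $\mathcal{E}xt^{2}(\mathcal{F},\mathcal{O}_{\bp3})=\mathcal{E}xt^{3}(\mathcal{F},\mathcal{O}_{\bp3})=0$, you correctly source from the monad of Corollary \ref{ccgm-like-monad}. (Both your argument and the paper's use rank $2$ self-duality of $\mathcal{F}^{\vee}$ at the same spots, so that is not a point of difference.)

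The gap is exactly where you flagged it, in $\ho^{0}(\mathcal{F}^{\vee})=0$, and it is a real one. Definition \ref{Instanton-def} assumes only semistability, and your fallback for the strictly semistable case does not close: the extension $0\to\mathcal{O}_{\bp3}\to\mathcal{F}^{\vee\vee}\to\mathcal{I}_{Z}\to0$ is classified by $\ext^{1}(\mathcal{I}_{Z},\mathcal{O}_{\bp3})\cong\ext^{2}(\mathcal{O}_{Z},\mathcal{O}_{\bp3})$, which is nonzero whenever $Z$ is a nonempty curve (this is precisely the Hartshorne--Serre construction), so nothing forces $\mathcal{F}^{\vee\vee}\cong\mathcal{O}_{\bp3}^{\oplus2}$; and even when $\mathcal{F}^{\vee\vee}\cong\mathcal{O}_{\bp3}^{\oplus2}$ you cannot conclude $\mathcal{F}\cong\mathcal{O}_{\bp3}^{\oplus2}$, since $\mathcal{F}$ is a \emph{proper} subsheaf of its double dual. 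More decisively, your argument never invokes the hypothesis $\ho^{3}(\mathcal{F}(-4,1))=0$, and no argument can avoid it: the kernel $\mathcal{F}$ of a general surjection $\mathcal{O}_{\bp3}^{\oplus2}\twoheadrightarrow\mathcal{O}_{L}(1)\oplus\mathcal{O}_{L'}(1)$, with $L,L'$ disjoint lines of class $H^{2}$, is semistable and torsion free with $c_{1}=0$, satisfies $\ho^{0}(\mathcal{F})=0$ and items (ii)--(iii) of Definition \ref{Instanton-def}, yet has $\ho^{0}(\mathcal{F}^{\vee})=\mathbb{C}^{2}$; the only condition it violates is $\ho^{3}(\mathcal{F}(-4,1))=0$. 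The paper gets the vanishing from the left column of the display of the double-dualized monad, via $\ho^{0}(K)\to\ho^{0}(\mathcal{F}^{\vee\vee})\to\ho^{1}(\mathcal{M}^{-1})$ with both outer terms zero. Within your framework the quickest repair is Serre duality on the locally free $\mathcal{F}^{\vee\vee}\cong\mathcal{F}^{\vee}$:
\begin{equation*}
\ho^{0}(\mathcal{F}^{\vee})\subseteq\ho^{0}(\mathcal{F}^{\vee}(0,1))\cong\ho^{3}(\mathcal{F}^{\vee\vee}(-4,1))^{\ast}\cong\ho^{3}(\mathcal{F}(-4,1))^{\ast}=0,
\end{equation*}
where the inclusion holds because $E$ is effective and the last isomorphism because $\mathcal{F}^{\vee\vee}/\mathcal{F}$ is supported in dimension at most $1$.
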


Remark that from this lemma one can deduce that the dual of a rank $2$ torsion free instanton sheaf is an instanton bundle according to the definition from \cite{ccgm}.

\begin{proof}
Dualizing the monad \eqref{ccgm-like-monad} one gets a complex 
\begin{equation}\label{dual-complex}
\widetilde{\mathcal{M}}^{\bullet}:\quad 0\to(\mathcal{M}^{1})^{\vee}\to(\mathcal{M}^{0})^{\vee}\to(\mathcal{M}^{-1})^{\vee}\to0 \end{equation} whose cohomology is given by $\mathcal{H}^{0}(\widetilde{\mathcal{M}})=\mathcal{F}^{\vee}$ and $\mathcal{H}^{1}(\widetilde{\mathcal{M}})=\mathcal{E}xt^{1}(\mathcal{F},\mathcal{O}_{\bp3}).$ The display associated to it can be written as:

\begin{equation}\label{d-display}
\xymatrix@R-1pc{
& 0\ar[d]&0\ar[d] & &  \\
& (\mathcal{M}^{1})^{\vee}\ar[d]\ar@{=}[r]&(\mathcal{M}^{1})^{\vee}\ar[d]^{\alpha} & &  \\
0\ar[r]& C^{\vee} \ar[r]\ar[d] &\mathcal{M}^{0}\ar[r]^{\beta}\ar[d] &\ar[r]\ar@{=}[d] \mathcal{G}&0  \\
0\ar[r]&\mathcal{F}^{\vee}\ar[r]\ar[d] &K^{\vee}\ar[r]\ar[d] &\mathcal{G}  \ar[r] &0  \\
& 0&0 & &  \\
}
\end{equation}
where $\mathcal{G}$ is a sheaf that fits in the following short exact sequence: 
\begin{equation}\label{G-sheaf}
0\to\mathcal{G}\to(\mathcal{M}^{-1})^{\vee}\to\mathcal{E}xt^{1}(\mathcal{F},\mathcal{O}_{\bp3})\to0,
\end{equation}
and whose double dual is $\mathcal{M}^{-1}.$

\begin{itemize}
    \item[(i)] By twisting the left column by $\mathcal{O}_{\bp3}(-2,1))$ and taking the long exact sequence in cohomology one gets $$\ho^{1}(C^{\vee}(-2,1))\to\ho^{1}(\mathcal{F}^{\vee}(-2,1))\to \ho^{2}((\mathcal{M}^{1})^{\vee}(-2,1)),$$ in which the term on the right is zero, since $$\ho^{2}((\mathcal{M}^{1})^{\vee}(-2,1))=\ho^{2}(\mathcal{O}_{\bp3}(-2,1))^{2k-l-2}=0.$$ Using the sequence middle row of \eqref{d-display}, the term on the left can be put in the sequence $$\ho^{0}(\mathcal{G}(-2,1))\to \ho^{1}(C^{\vee}(-2,1))\to\ho^{1}((\mathcal{M}^{0})^{\vee}(-2,1)).$$ The term on the right is zero by using the exact sequence \eqref{omega1} and the vanishings in item ${\rm (iii)}$ in Proposition \ref{van1}. Furthermore, $\ho^{0}(\mathcal{G}(-2,1))=0,$ since $\ho^{0}(\mathcal{G}(-2,1))\hookrightarrow\ho^{0}((\mathcal{M}^{-1})^{\vee}(-2,1))=\ho^{0}(\mathcal{O}_{\bp3}(-1,0))^{\oplus(l+\gamma)}\oplus \ho^{0}(\Omega^{1}(1,-1)).$ The first summand is cohomologically trivial and from \eqref{omega2} one has  \begin{equation}\label{middle1}\ho^{0}(\mathcal{O}_{\bp3}(-1,1)\to \ho^{0}(\Omega^{1}(1,-1))\to\ho^{1}(\mathcal{O}_{\bp3}(-2,2)).\end{equation} Both left and right terms vanish, since also $\mathcal{O}_{\bp3}(-2,2)$ and $\mathcal{O}_{\bp3}(-1,1)$ are cohomologically trivial. 
    
    To prove that $\mathcal{F}^{\vee}$ has no sections, recall that from Remark \ref{rmk-on-def} that this sheaf is locally free and since it is of rank $2$ and has trivial Chern class, we have $\mathcal{F}^{\vee}\cong\mathcal{F}^{\vee\vee}.$ Dualizing again the complex \eqref{dual-complex}, we obtain a monad whose terms are equal to the terms in $\mathcal{M}^{\bullet},$ and whose maps, $\alpha$ and $\beta$ are different, in particular its first map $\alpha^{\vee\vee}(x)$ is injective everywhere. Hence, one can fit $\ho^{0}(\mathcal{F}^{\vee})$ in the sequence \begin{equation}\label{middle2} \ho^{0}(K)\to\ho^{0}(\mathcal{F}^{\vee})\to\underbrace{\ho^{1}\mathcal{O}_{\bp3}(-1,1))}_{0}\oplus\ho^{1}(\Omega^{1}(0,-1)).\end{equation} From \eqref{omega2} one has  $$\ho^{1}(\mathcal{O}_{\bp3}(-2,1)\to \ho^{1}(\Omega^{1}(0,-1)\to\ho^{2}(\mathcal{O}_{\bp3}(-3,2)),$$ and it follows that the right term, in \eqref{middle2}, is trivial. Finally, we have an injection $$\ho^{0}(K)\hookrightarrow\ho^{0}(\mathcal{M}^{0})=\left\{\begin{array}{c}\ho^{0}(\mathcal{O}_{\bp3}(-1,1))^{\oplus \gamma}=0 \\ \oplus \\ \ho^{0}(\Omega^{1}(1,-1))^{\oplus k}=0 \textnormal{ by \eqref{middle1}}\\ \oplus \\ \ho^{0}(\mathcal{O}_{\bp3}(-1,0))^{\oplus 2(k-l)}=0\end{array}\right.$$   
\vspace{0.5cm}

    \item[(ii)] Twisting \eqref{G-sheaf}, by $\mathcal{O}_{\bp3}(-2,1)$ and taking cohomology, we get the exact sequence \begin{align}&\ho^{0}((\mathcal{M}^{-1})^{\vee}(-2,1))\to\ho^{0}(\mathcal{E}xt^{1}(\mathcal{F},\mathcal{O}_{\bp3}(-2,1)))\to\ho^{1}(\mathcal{G}(-2,1)) \notag \\ &\to\ho^{1}((\mathcal{M}^{-1})^{\vee}(-2,1)).\notag \end{align}  

Recall that $$(\mathcal{M}^{-1})^{\vee}(-2,1)=\begin{array}{c}\Omega^{1}(1,-1)^{k-l} \\ \oplus \\ \mathcal{O}_{\bp3}(-1,0)^{l-\gamma} \end{array}$$
 and $\mathcal{O}_{\bp3}(-1,0)$ is cohomologically trivial. On the other hand, from the long exact sequence in cohomology associated to \eqref{omega2} twisted by $\mathcal{O}_{\bp3}(1,-1)$  one has 
    $$\ho^{0}(\mathcal{O}_{\bp3}(-1,1))\to \ho^{0}(\Omega^{1}(1,-1))\to \ho^{1}(\mathcal{O}_{\bp3}(-2,2))$$ and $$\ho^{1}(\mathcal{O}_{\bp3}(-1,1))\to \ho^{1}(\Omega^{1}(1,-1))\to \ho^{2}(\mathcal{O}_{\bp3}(-2,2)).$$ Since the line bundles $\mathcal{O}_{\bp3}(-2,2)$ and $\mathcal{O}_{\bp3}(-1,1)$ are both cohomologically trivial, it follows that $\ho^{0}(\mathcal{E}xt^{1}(\mathcal{F},\mathcal{O}_{\bp3}(-2,1)))\cong\ho^{1}(\mathcal{G}(-2,1)).$
    Now, if we consider the lower row of the display \eqref{d-display} twisted by $\mathcal{O}_{\bp3}(-2,1),$ and take the long exact sequence in cohomology, we obtain $$\ho^{1}(K^{\vee}(-2,1))\to\ho^{1}(\mathcal{G}(-2,1))\to\ho^{2}(\mathcal{F}^{\vee}(-2,1))\to\ho^{2}(K^{\vee}(-2,1)).$$  
    On the other hand $$\ho^{1}(\mathcal{M}^{0})^{\vee}(-2,1))\to \ho^{1}(K^{\vee}(-2,1))\to\ho^{2}(\mathcal{O}_{\bp3}(-2,1))$$ and $$\ho^{2}(\mathcal{M}^{0})^{\vee}(-2,1))\to \ho^{2}(K^{\vee}(-2,1))\to\ho^{3}(\mathcal{O}_{\bp3}(-2,1)).$$ A similar computation as above, for the case of $(\mathcal{M}^{-1})^{\vee}$ and using the fact that the line bundles involved are cohomologically trivial, one obtains that the isomorphism $\ho^{1}(\mathcal{G}(-2,1))\cong\ho^{2}(\mathcal{F}^{\vee}(-2,1))$ Thus $$\ho^{2}(\mathcal{F}^{\vee}(-2,1))\cong\ho^{0}(\mathcal{E}xt^{1}(\mathcal{F},\mathcal{O}_{\bp3}(-2,1))).$$ But $\ho^{2}(\mathcal{F}^{\vee}(-2,1))=\ho^{1}(\mathcal{F}^{\vee}(-2,1)),$ and from item $(i)$ the latest term is zero.
    The proof of the vanishing $\ho^{1}(\mathcal{E}xt^{1}( \mathcal{F},\mathcal{O}_{\bp3}(-2,1)))$ is similar.
    
    \item[(iii)] Let $\mathcal{Q}:=\mathcal{F}^{\vee\vee}/\mathcal{F}.$ From the exact sequence \begin{equation}\label{db-dual}0\to\mathcal{F}\to\mathcal{F}^{\vee\vee}\to\mathcal{Q}\to0\end{equation} twisted by $\mathcal{O}_{\bp3}(-2,1)$ one has $$0=\ho^{i}(\mathcal{F}^{\vee\vee}(-2,1))\to\ho^{i}(\mathcal{Q}(-2,1))\to\ho^{i+1}(\mathcal{F}(-2,1))=0.$$ The vanishing of the left hand term can be obtained by twisting the monad $(\mathcal{M}^{\bullet})^{\vee\vee}$ by $\mathcal{O}_{\bp3}(-2,1)$ and applying the vanishings in Proposition \ref{van1}, item ${\rm (iii)}.$

\end{itemize}
\end{proof}

\begin{lemma}\label{ext-vanish}
\begin{align}
    \mathcal{E}xt^{1,3}(\mathcal{E}xt^{1}(\mathcal{F},\mathcal{O}_{\bp3}),\omega_{\bp3})=0, \qquad and \notag  \\ 
\mathcal{E}xt^{2}(\mathcal{E}xt^{1}(\mathcal{F},\mathcal{O}_{\bp3}),\omega_{\bp3})\cong(\mathcal{F}^{\vee\vee}/\mathcal{F})\otimes\omega_{\bp3}. \notag
\end{align}
\end{lemma}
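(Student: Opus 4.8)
The plan is to deduce the statement from a single application of Grothendieck--Serre biduality on the smooth threefold $\bp3$, after first pinning down the homological nature of the sheaves at hand. Write $\mathcal{A}:=\mathcal{E}xt^{1}(\mathcal{F},\mathcal{O}_{\bp3})$ and $\mathcal{Q}:=\mathcal{F}^{\vee\vee}/\mathcal{F}$. The inputs I would assemble first are: (a) $\mathcal{F}^{\vee\vee}$ is locally free --- this is Theorem~\ref{main1}(i), resting on Theorem~\ref{almost-roggero}; (b) from the display~\eqref{display} one reads off the length-one resolution $0\to\mathcal{M}^{-1}\stackrel{\alpha}{\to}K\to\mathcal{F}\to0$, in which $\mathcal{M}^{-1},\mathcal{M}^{0},\mathcal{M}^{1}$ (and hence $K=\ker\beta$) are locally free, being built from line bundles and twists of $\Omega^{1}$; in particular $\mathcal{E}xt^{\geq2}(\mathcal{F},\mathcal{O}_{\bp3})=0$; (c) a torsion-free sheaf on a smooth variety is locally free off a closed set of codimension $\geq2$, so $\mathrm{Supp}(\mathcal{A})$ and $\mathrm{Supp}(\mathcal{Q})$ have codimension $\geq2$.

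Next I would apply $\mathcal{H}om_{\mathcal{O}_{\bp3}}(-,\mathcal{O}_{\bp3})$ to the sequence~\eqref{db-dual}, $0\to\mathcal{F}\to\mathcal{F}^{\vee\vee}\to\mathcal{Q}\to0$. Here $\mathcal{E}xt^{\geq1}(\mathcal{F}^{\vee\vee},\mathcal{O}_{\bp3})=0$ by local freeness, $\mathcal{H}om(\mathcal{Q},\mathcal{O}_{\bp3})=\mathcal{E}xt^{1}(\mathcal{Q},\mathcal{O}_{\bp3})=0$ since $\mathrm{Supp}(\mathcal{Q})$ has codimension $\geq2$, and the map $\mathcal{H}om(\mathcal{F}^{\vee\vee},\mathcal{O}_{\bp3})\to\mathcal{H}om(\mathcal{F},\mathcal{O}_{\bp3})$ induced by $\mathcal{F}\hookrightarrow\mathcal{F}^{\vee\vee}$ is the canonical isomorphism $\mathcal{F}^{\vee\vee\vee}\stackrel{\sim}{\to}\mathcal{F}^{\vee}$ (duals are reflexive). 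Chasing the long exact $\mathcal{E}xt$-sequence then collapses it to $\mathcal{A}\cong\mathcal{E}xt^{2}(\mathcal{Q},\mathcal{O}_{\bp3})$, with $\mathcal{E}xt^{3}(\mathcal{Q},\mathcal{O}_{\bp3})=0$ (it is squeezed between $\mathcal{E}xt^{2}(\mathcal{F},\mathcal{O}_{\bp3})=0$ and $\mathcal{E}xt^{3}(\mathcal{F}^{\vee\vee},\mathcal{O}_{\bp3})=0$). Equivalently, $R\mathcal{H}om(\mathcal{Q},\mathcal{O}_{\bp3})$ is concentrated in degree $2$, equal to $\mathcal{A}$; equivalently, $\mathcal{Q}$ is Cohen--Macaulay of pure codimension $2$.

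Then I would invoke biduality with dualizing complex $\omega_{\bp3}[3]$, $R\mathcal{H}om\bigl(R\mathcal{H}om(\mathcal{Q},\omega_{\bp3}[3]),\omega_{\bp3}[3]\bigr)\cong\mathcal{Q}$. Since $\omega_{\bp3}$ is invertible and $R\mathcal{H}om(\mathcal{Q},\mathcal{O}_{\bp3})\cong\mathcal{A}[-2]$, the inner term is $(\mathcal{A}\otimes\omega_{\bp3})[1]$, and unwinding the shifts gives
\[
\mathcal{Q}\;\cong\;R\mathcal{H}om\bigl(\mathcal{A}\otimes\omega_{\bp3},\,\omega_{\bp3}\bigr)[2]\;\cong\;R\mathcal{H}om(\mathcal{A},\mathcal{O}_{\bp3})[2].
\]
Taking cohomology sheaves yields $\mathcal{E}xt^{i}(\mathcal{A},\mathcal{O}_{\bp3})=0$ for $i\neq2$ and $\mathcal{E}xt^{2}(\mathcal{A},\mathcal{O}_{\bp3})\cong\mathcal{Q}$, and tensoring by the line bundle $\omega_{\bp3}$ gives exactly $\mathcal{E}xt^{1,3}(\mathcal{A},\omega_{\bp3})=0$ and $\mathcal{E}xt^{2}(\mathcal{A},\omega_{\bp3})\cong\mathcal{Q}\otimes\omega_{\bp3}=(\mathcal{F}^{\vee\vee}/\mathcal{F})\otimes\omega_{\bp3}$. (If one prefers to stay out of the derived category, the last two steps read: $\mathcal{Q}$ is Cohen--Macaulay of pure codimension $2$, and the claim is the standard self-duality $(\mathcal{Q}^{D})^{D}\cong\mathcal{Q}$ for such sheaves applied to $\mathcal{Q}^{D}:=\mathcal{E}xt^{2}(\mathcal{Q},\omega_{\bp3})\cong\mathcal{A}\otimes\omega_{\bp3}$.)

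The step needing the most care is the collapse of the long exact $\mathcal{E}xt$-sequence in the second paragraph: one must check both that $\mathcal{H}om(\mathcal{F}^{\vee\vee},\mathcal{O}_{\bp3})\to\mathcal{H}om(\mathcal{F},\mathcal{O}_{\bp3})$ is an isomorphism (so that the connecting map into $\mathcal{E}xt^{1}(\mathcal{Q},\mathcal{O}_{\bp3})$ vanishes) and that $\mathcal{E}xt^{0,1}(\mathcal{Q},\mathcal{O}_{\bp3})=0$. It is precisely here, together with the identity $\mathcal{E}xt^{\geq2}(\mathcal{F},\mathcal{O}_{\bp3})=0$, that the hypothesis ``$\mathcal{F}$ a torsion-free rank~$2$ instanton sheaf'' enters: the codimension bound furnishes $\mathcal{E}xt^{0,1}(\mathcal{Q},\mathcal{O}_{\bp3})=0$, while $\mathcal{E}xt^{\geq2}(\mathcal{F},\mathcal{O}_{\bp3})=0$ forces $\mathcal{E}xt^{3}(\mathcal{Q},\mathcal{O}_{\bp3})=0$, ruling out a zero-dimensional summand of $\mathcal{Q}$ that would otherwise invalidate the identification $\mathcal{E}xt^{2}(\mathcal{A},\omega_{\bp3})\cong\mathcal{Q}\otimes\omega_{\bp3}$.
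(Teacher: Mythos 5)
Your argument is correct, but it reaches the conclusion by a genuinely different route from the paper's. The paper stays inside the monad formalism: it splices the sequence \eqref{G-sheaf} into the bottom row of the display \eqref{d-display}, uses the snake lemma to identify $\mathcal{E}xt^{1}(\mathcal{G},\mathcal{O}_{\bp3})\cong\mathcal{F}^{\vee\vee}/\mathcal{F}$, then computes $\mathcal{E}xt^{i}(\mathcal{E}xt^{1}(\mathcal{F},\mathcal{O}_{\bp3}),\mathcal{O}_{\bp3})$ from the explicit locally free presentations of $\mathcal{E}xt^{1}(\mathcal{F},\mathcal{O}_{\bp3})$ and of $\mathcal{G}$ supplied by the dual monad, and finally twists by $\omega_{\bp3}$. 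You bypass the monad and the auxiliary sheaf $\mathcal{G}$ entirely: dualizing the double-dual sequence \eqref{db-dual} gives $R\mathcal{H}om(\mathcal{Q},\mathcal{O}_{\bp3})\cong\mathcal{E}xt^{1}(\mathcal{F},\mathcal{O}_{\bp3})[-2]$, and then Grothendieck--Serre biduality does the rest. The inputs you use --- local freeness of $\mathcal{F}^{\vee\vee}$ (Theorem \ref{main1}(i), which the paper establishes via Theorem \ref{almost-roggero} before and independently of this lemma), $\mathcal{E}xt^{\geq 2}(\mathcal{F},\mathcal{O}_{\bp3})=0$ (already noted in the proof of Corollary \ref{ccgm-like-monad}), and the codimension bound on $\mathrm{Supp}(\mathcal{Q})$ --- are all available at this stage, so there is no circularity; and the identification $\mathcal{E}xt^{1}(\mathcal{F},\mathcal{O}_{\bp3})\cong\mathcal{E}xt^{2}(\mathcal{Q},\mathcal{O}_{\bp3})$ you pass through is one the paper itself derives later, in the proof of Theorem \ref{main1}. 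What your route buys is generality and economy: the statement is seen to hold for any torsion-free sheaf of homological dimension $\leq 1$ with locally free double dual, independently of the exceptional collection on $\bp3$, and you get the additional vanishing $\mathcal{H}om(\mathcal{E}xt^{1}(\mathcal{F},\mathcal{O}_{\bp3}),\omega_{\bp3})=0$ for free. What the paper's route buys is that it avoids the derived category and dualizing complexes altogether, working only with sheaf-level long exact sequences attached to objects ($\mathcal{G}$, $K$, $\mathcal{M}^{-1}$) that are already in play from Lemma \ref{v-vanish}.
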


\begin{proof}
combining \eqref{G-sheaf} and the lower row in \eqref{d-display} one has a diagram
    $$\xymatrix@R-1pc{
     & & & 0\ar[d]& & \\
     0\ar[r]&\mathcal{M}^{-1}\ar[r]\ar[d] &K\ar[r]\ar@{=}[d] &\mathcal{F}\ar[r]\ar[d] &0 & \\
     0\ar[r]& \mathcal{G}^{\vee}\ar[r]&K\ar[r] & \mathcal{F}^{\vee\vee}\ar[r]\ar[d]& \mathcal{E}xt^{1}(\mathcal{G},\mathcal{O}_{\bp3})\ar[r]&0 \\
     & & & \mathcal{Q} \ar[d]& & \\
     & & &0 & & \\ 
    }$$
    and by the snake lemma it follows that $\mathcal{E}xt^{1}(\mathcal{G},\mathcal{O}_{\bp3})\cong\mathcal{Q}.$
    Moreover, by dualizing \eqref{G-sheaf} one has $\mathcal{E}xt^{1}(\mathcal{E}xt^{1}(\mathcal{F},\mathcal{O}_{\bp3}),\mathcal{O}_{\bp3})=0,$ $\mathcal{E}xt^{1}(\mathcal{G},\mathcal{O}_{\bp3})\cong\mathcal{E}xt^{2}(\mathcal{E}xt^{1}(\mathcal{F},\mathcal{O}_{\bp3}),\mathcal{O}_{\bp3})$ and $\mathcal{E}xt^{2}(\mathcal{G},\mathcal{O}_{\bp3})\cong\mathcal{E}xt^{3}(\mathcal{E}xt^{1}(\mathcal{F},\mathcal{O}_{\bp3}),\mathcal{O}_{\bp3}).$ Finally, by dualizing the middle row in \eqref{d-display} it is easy to see that $\mathcal{E}xt^{2}(\mathcal{G},\mathcal{O}_{\bp3})=0.$ A final twist by $\omega_{\bp3}$ gives the result.  

\end{proof}

\begin{proof}[Proof of Theorem \ref{main1}]

By Lemma \ref{ext-vanish} we have $$\codim(\mathcal{E}xt^{3}(\mathcal{E}xt^{1}(\mathcal{F},\mathcal{O}_{\bp3}),\omega_{\bp3}))=4,$$ since it has no support on $\bp3,$ and it follows from \cite[Proposition 1.1.10]{Huy} that $\mathcal{E}xt^{1}(\mathcal{F},\mathcal{O}_{\bp3})$ is pure of dimension $1.$ Moreover, by item $(ii)$ of Lemma \ref{v-vanish} this sheaf is a rank $0$ instanton sheaf.

On the other hand, by dualizing the sequence \eqref{db-dual}, one has $\mathcal{E}xt^{1}(\mathcal{F},\mathcal{O}_{\bp3})=\mathcal{E}xt^{2}(\mathcal{Q},\mathcal{O}_{\bp3})$ and $\mathcal{E}xt^{3}(\mathcal{Q},\mathcal{O}_{\bp3})=\mathcal{E}xt^{2}(\mathcal{F},\mathcal{O}_{\bp3})=0.$ Hance also $\mathcal{Q}$ is pure of dimension $1.$ It also follows from sequence \eqref{db-dual} that $\mathcal{Q}$ is a rank $0$ instanton sheaf, since $\ho^{0,1}(\mathcal{Q}(-2,1))=0,$ from item $(iii)$ of Lemma \ref{v-vanish}.
Finally the charge of these instanton is easily obtained.

\end{proof}

\section{Elementary transformations}\label{element-transf}

In this section we shall give examples of strictly torsion-free instanton sheaves on $\bp3;$ We start with the following:

\begin{definition}\label{elementary}
Let $\mathcal{E}$ be a rank $2$ instanton sheaf on $\bp3,$ of charge $kH^{2}+lE^{2};$ a triple $(\Sigma, \mathcal{L},\phi)$ will be called an elementary transformation data for $\mathcal{E}$ if it satisfies 
\begin{itemize}
    \item[(i)] $i:\Sigma\hookrightarrow\bp3$ is an embedded reduced, but possibly reducible, locally complete intersection curve of genus $g,$ with class $[\Sigma]=d_{1}H^{2}+d_{2}E^{2}\in A^{2}(\bp3);$ 
    \item[(ii)] $\phi:\mathcal{E}\twoheadrightarrow i_{\ast}\mathcal{L}\otimes\mathcal{O}_{\bp3}(2,-1)$ is a surjective morphism.
    \item[(iii)] $\mathcal{L}\in\pic^{g-1}(\Sigma)$ a line bundle on $\Sigma$ satisfying $\ho^{0,1}(i_{\ast}\mathcal{L})=0,$ and such that the following maps $$\ho^{1}(\mathcal{E}(-1,1))\twoheadrightarrow\ho^{1}(i_{\ast}\mathcal{L}\otimes\mathcal{O}_{\bp3}(1,0))\textnormal{ and } $$ $$\ho^{1}(\mathcal{E}(-1,0))\twoheadrightarrow\ho^{1}(i_{\ast}\mathcal{L}\otimes\mathcal{O}_{\bp3}(1,-1))$$ are surjective;

\end{itemize}
In this case, the sheaf $\mathcal{F}:=\ker(\phi)$ is called an elementary transformation of $\mathcal{E}.$ 
\end{definition}

The above definition also appears in \cite{JMT1} and \cite{MT}. Some examples will be given in {\bf Example} \ref{examples} bellow.

\begin{proposition}\label{transform-also-instanton}
Let $\mathcal{E}$ be a rank $2$ torsion-free instanton of charge $kH^{2}+lE^{2},$ Then its elementary transform $\mathcal{F}$ with respect to the data $(\Sigma,\mathcal{L},\phi)$ is also a rank $2$ instanton sheaf of charge $(k+d_{1})H^{2}+(l+d_{2})E^{2},$ and satisfies $\mathcal{F}^{\vee}\cong\mathcal{E}^{\vee}.$     
\end{proposition}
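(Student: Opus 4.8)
The starting point is the defining exact sequence
\begin{equation}\label{elem-seq}
0\to\mathcal{F}\to\mathcal{E}\xrightarrow{\phi} i_{\ast}\mathcal{L}\otimes\mathcal{O}_{\bp3}(2,-1)\to0,
\end{equation}
and the plan is to verify directly that $\mathcal{F}$ satisfies every clause of Definition \ref{Instanton-def}. First I would record the numerical data: since $i_{\ast}\mathcal{L}\otimes\mathcal{O}_{\bp3}(2,-1)$ is supported on the curve $\Sigma$ of class $d_1H^2+d_2E^2$, it has rank $0$, so $\mathcal{F}$ has rank $2$ and $c_1(\mathcal{F})=c_1(\mathcal{E})=0$; computing $c_2$ from \eqref{elem-seq} (the second Chern class of a rank-$0$ sheaf supported on $\Sigma$ contributes $[\Sigma]$) gives charge $(k+d_1)H^2+(l+d_2)E^2$. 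Torsion-freeness of $\mathcal{F}$ is automatic as a subsheaf of the torsion-free $\mathcal{E}$. The isomorphism $\mathcal{F}^{\vee}\cong\mathcal{E}^{\vee}$ comes from dualizing \eqref{elem-seq}: applying $\mathcal{H}om(-,\mathcal{O}_{\bp3})$ yields $0\to\mathcal{E}^{\vee}\to\mathcal{F}^{\vee}\to\mathcal{E}xt^1(i_{\ast}\mathcal{L}\otimes\mathcal{O}_{\bp3}(2,-1),\mathcal{O}_{\bp3})$, and since the $\mathcal{E}xt^1$ of a sheaf with $1$-dimensional support vanishes (codimension $2$ forces $\mathcal{E}xt^{<2}=0$), the first map is an isomorphism; one should also note $\mathcal{E}^\vee$ is reflexive, hence locally free of rank $2$ with trivial $c_1$ by Remark \ref{rmk-on-def}/Theorem \ref{almost-roggero}.

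Next comes semi-stability. Since $\mathcal{F}\subset\mathcal{E}$ with the same rank and $c_1$, any destabilizing subsheaf of $\mathcal{F}$ is also a subsheaf of $\mathcal{E}$ of the same slope, so $\mu$-semistability of $\mathcal{E}$ passes to $\mathcal{F}$ directly (no new rank-$1$ subsheaves of positive degree can appear). This is the easy half; the introduction promises that $\mu$-(semi)stability is preserved under elementary transforms, and in the rank-$2$, $c_1=0$ situation this inclusion argument suffices.

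The substance is checking the cohomological vanishings (i)--(iii) of Definition \ref{Instanton-def} for $\mathcal{F}$, and this is where the surjectivity hypotheses in item (ii) of Definition \ref{elementary} are used. The mechanism is uniform: twist \eqref{elem-seq} by the relevant line bundle and read the long exact sequence, using that $\mathcal{E}$ is an instanton and that $\ho^{0,1}(i_{\ast}\mathcal{L})=0$ (equivalently, after twisting, the stated cohomology of $i_{\ast}\mathcal{L}\otimes\mathcal{O}_{\bp3}(2,-1)$, shifted appropriately). Concretely: twisting by $\mathcal{O}_{\bp3}(-2,1)$, the sheaf $i_{\ast}\mathcal{L}\otimes\mathcal{O}_{\bp3}(2,-1)(-2,1)=i_{\ast}\mathcal{L}$ has no $\ho^{0}$ or $\ho^{1}$, and its $\ho^{2},\ho^{3}$ vanish as it is supported on a curve, so $\ho^{i}(\mathcal{F}(-2,1))\cong\ho^{i}(\mathcal{E}(-2,1))$ for $i=1,2$, giving clause (ii); twisting by $\mathcal{O}_{\bp3}(-4,1)$ handles $\ho^{3}(\mathcal{F}(-4,1))$ via $\ho^{2}$ of the curve sheaf (zero) and $\ho^{3}(\mathcal{E}(-4,1))=0$ which follows from the instanton property of $\mathcal{E}$ together with Serre duality; and $\ho^{0}(\mathcal{F})\hookrightarrow\ho^{0}(\mathcal{E})=0$. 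For clause (iii), twist by $\mathcal{O}_{\bp3}(0,-1)$ and $\mathcal{O}_{\bp3}(-1,1)$: here the relevant piece of the long exact sequence is $\ho^{1}(\mathcal{E}(0,-1))\to\ho^{1}(i_{\ast}\mathcal{L}\otimes\mathcal{O}_{\bp3}(2,-2))\to\ho^{2}(\mathcal{F}(0,-1))\to\ho^{2}(\mathcal{E}(0,-1))=0$ (and similarly with $(-1,1)$, where one lands in $\ho^1(i_*\mathcal{L}\otimes\mathcal{O}_{\bp3}(1,0))$), so $\ho^{2}(\mathcal{F}(0,-1))=0$ and $\ho^{2}(\mathcal{F}(-1,1))=0$ are \emph{exactly} the content of the two surjectivity requirements imposed on the data. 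Thus the main obstacle — and the reason those surjectivities are built into the definition rather than derived — is clause (iii): it does not follow from $\mathcal{E}$ being an instanton alone, and the triple $(\Sigma,\mathcal{L},\phi)$ has been tailored precisely so that the connecting maps $\ho^{1}(\mathcal{E}(\text{twist}))\to\ho^{1}(\text{curve sheaf})$ are onto. I would organize the write-up as a short lemma recording the cohomology of $i_{\ast}\mathcal{L}\otimes\mathcal{O}_{\bp3}(2,-1)$ under the needed twists, then dispatch (i), (ii), (iii), semistability, and the Chern class computation in that order.
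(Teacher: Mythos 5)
Your proposal is correct and matches the paper's proof essentially step for step: the long exact sequences of the defining sequence twisted by $(-2,1)$, $(0,-1)$ and $(-1,1)$ combined with the hypotheses on the data, the "a destabilizing subsheaf of $\mathcal{F}$ destabilizes $\mathcal{E}$" argument for semistability, and the Chern-class bookkeeping (the paper does the last via Grothendieck--Riemann--Roch with ${\rm Td}(N_{\Sigma/\bp3})^{-1}$, which also checks $\ch_3=0$, but your shortcut suffices for the $c_2$ claim); you even supply the dualization argument for $\mathcal{F}^{\vee}\cong\mathcal{E}^{\vee}$ that the paper leaves implicit. The only caveat --- which is the paper's, not yours --- is that the second surjectivity in Definition \ref{elementary}(ii) is stated for the twist $(-1,0)$, landing in $\ho^{1}(i_{\ast}\mathcal{L}\otimes\mathcal{O}_{\bp3}(1,-1))$, whereas the twist you correctly identify as the one needed for $\ho^{2}(\mathcal{F}(0,-1))=0$ is $(0,-1)$, landing in $\ho^{1}(i_{\ast}\mathcal{L}\otimes\mathcal{O}_{\bp3}(2,-2))$.
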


\begin{proof}
First, by taking the long exact sequence of cohomology associated to the  elementary transformation sequence 
\begin{equation}\label{transform-sequence}
    0\to\mathcal{F}\to\mathcal{E}\stackrel{\phi}{\to}i_{\ast}\mathcal{L}(2,-1)\to0
\end{equation}
twisted by $\mathcal{O}_{\bp3}(-2,1)$ and by using the vanishing $\ho^{0,1}(i_{\ast}\mathcal{L})=0,$ in item $(ii)$ of Definition \ref{elementary}, one concludes that $\ho^{1,2}(\mathcal{F}(-2,1)).$ On the other hand, twisting by the sequence above by either $\mathcal{O}_{\bp3}(0,-1)$ or $\mathcal{O}_{\bp3}(-1,1)$ and using the surjectivity of the morphisms, again, in item $(ii)$ of Definition \ref{elementary}, it follows that $\ho^{2}(\mathcal{F}(0,-1))$ and $\ho^{2}(\mathcal{F}(-1,1)).$ Finally, the vanishings of item $(i)$ in Definition \ref{Instanton-def} are easily obtained, since $i_{\ast}\mathcal{L}$ is supported in dimension $1$ and $\mathcal{F}(-1,1)$ is a subsheaf of $\mathcal{E}(-1,1).$ Moreover if $\mathcal{F}$ is not semi-stable, then a destabilizing ideal sheaf of $\mathcal{F},$ would also destabilize $\mathcal{E},$ contradicting our hypothesis.
It remains to compute the instanton charge of $\mathcal{F}.$ This is an application of the formula 
\begin{equation}\label{chern-pull-back}
    \ch(i_{\ast}\mathcal{L}\otimes\mathcal{O}_{\bp3}(2,-1))=i_{\ast}[\ch(\mathcal{L}\otimes i^{\ast}\mathcal{O}_{\bp3}(2,-1)){\rm Td}(N_{\Sigma/\bp3})^{-1}]
\end{equation}
where $N_{\Sigma/\bp3}$ is the normal sheaf to $\Sigma$ in $\bp3$ and its Todd class can be computed from the sequence $$0\to{\rm T}_{\Sigma}\to i_{\ast}{\rm T}_{\bp3}\to N_{\Sigma/\bp3}\to0,$$ 
and in our case, it is given by ${\rm Td}(N_{\Sigma/\bp3})^{-1}=1-(2d_{1}-d_{2}+g-1)\cdot [{\rm pt}],$ where $[{\rm pt}]$ is the class of a point. Since $\mathcal{L}\in\pic^{g-1}(\Sigma),$ it follows that 
\begin{align}
\ch(\mathcal{L}\otimes i^{\ast}\mathcal{O}_{\bp3}(2,-1))&=\ch((\mathcal{L}\otimes i^{\ast}\mathcal{O}_{\Sigma}((2d_{1}-d_{2})[{\rm pt}]))) \notag \\
&=1+(g-1+2d_{1}-d_{2})[{\rm pt}] \notag
\end{align}
Hence by \eqref{chern-pull-back} it follows that $\ch_{3}(\mathcal{L}\otimes i^{\ast}\mathcal{O}_{\bp3}(2,-1))=0.$
Moreover, since $c_{1}(\mathcal{L}\otimes i^{\ast}\mathcal{O}_{\bp3}(2,-1))=0,$ it follows that $\mathcal{F}$ is a rank $2$ instanton sheaf of charge $c_{2}(\mathcal{F})=(k+d_{1})H^{2}+(l+d_{2})E^{2}$

\end{proof}

\begin{remark}\label{torsion-free-existence}
If we start with a rank $2$ locally free instanton sheaf $\mathcal{E},$ then, by construction, its elementary transform $\mathcal{F},$ with respect to a datum $(\Sigma,\mathcal{L}, \phi),$ will be strictly torsion-free. Furthermore, by Proposition \ref{transform-also-instanton}, this is an instanton sheaf as in Definition \ref{Instanton-def}, with double dual $\mathcal{F}^{\vee\vee}=\mathcal{E},$ which proves their existence.
\end{remark}

\begin{example}\label{examples}
\begin{itemize}
\item[(i)] One can take $\Sigma:=L$ where $L$ is a line whose class $H^{2}$ in the Chow ring of $\bp3.$ Then by taking $\mathcal{L}:=\mathcal{O}_{L}(-1),$ it is easy to see that the conditions $\ho^{0,1}(i_{\ast}\mathcal{L})=0,$ are satisfied, and that the maps in item $(ii)$ of \ref{elementary} are surjective since $\ho^{1}(i_{\ast}\mathcal{L}\otimes\mathcal{O}_{\bp3}(1,0))=\ho^{1}(\mathcal{O}_{L})=0$ and $\ho^{1}(i_{\ast}\mathcal{L}\otimes\mathcal{O}_{\bp3}(1,-1))=\ho^{1}(\mathcal{O}_{L})=0.$ Thus the elementary transform of a rank $2$ instanton sheaf of charge $kH^{2}+lE^{2},$ is an instanton sheaf of charge $(k+1)H^{2}+lE^{2},$

\item[(ii)] One can also take $\Sigma:=L$ where, now, $L$ is a line whose class $E^{2}$ and setting $\mathcal{L}:=\mathcal{O}_{L}(-1),$ again, then $\ho^{0,1}(i_{\ast}\mathcal{L})=0,$ as above and surjectivity of the maps in item $(ii)$ of \ref{elementary} is also satisfied since $\ho^{1}(i_{\ast}\mathcal{L}\otimes\mathcal{O}_{\bp3}(1,0))=\ho^{1}(\mathcal{O}_{L}(-1))=0$ and $\ho^{1}(i_{\ast}\mathcal{L}\otimes\mathcal{O}_{\bp3}(1,-1))=\ho^{1}(\mathcal{O}_{L})=0.$ The elementary transform of the instanton sheaf of charge $kH^{2}+lE^{2},$ in this case, will be of charge $kH^{2}+(l+1)E^{2}.$

\item[(iii)] More generally if we set $\Sigma:=L_{1}\bigsqcup L_{2},$ where $L_{1}$ is a line whose class $H^{2},$ and $L_{2}$ is a line whose class $E^{2},$ then a concatenation of elementary transformations of items $(i)$ and $(ii)$ can be seen as an elementary transformation with respect to a pure $1$ dimensional sheaf $Q:=i_{1\ast}\mathcal{O}_{L_{1}}(2)\oplus i_{2\ast}\mathcal{O}_{L_{2}}(2),$ We let the reader check that all the conditions are satisfied. Moreover, this can, also, be generalized to the case where $\Sigma:=C_{1}\bigsqcup C_{2},$ where $C_{1}$ is a curve whose class $d_{1}H^{2},$ and $C_{2}$ is another curve whose class $d_{2}E^{2}.$  
\end{itemize}
\end{example}

For more details on the Hilbert scheme of line on Fano threefolds, we refer the reader to \cite{kuznetsov2}

\begin{lemma}\label{mu-stable}
If $\mathcal{E}$ is a $\mu_{L}$-stable rank $2$ instanton sheaf on $\bp3,$ then its elementary transform is also $\mu_{L}$-stable.
\end{lemma}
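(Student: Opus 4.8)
The plan is to compare slopes of destabilizing subsheaves of $\mathcal{F}$ with those of $\mathcal{E}$ via the elementary transformation sequence \eqref{transform-sequence}. Since $\mathcal{F}$ has rank $2$ and $c_1(\mathcal{F})=0$, $\mu$-stability means that every coherent subsheaf $\mathcal{G}\subset\mathcal{F}$ with $0<\operatorname{rk}\mathcal{G}<2$ has $\mu(\mathcal{G})<0$; equivalently, since such a $\mathcal{G}$ may be replaced by its saturation without decreasing its slope, it suffices to show $c_1(\mathcal{G})\cdot H^2<0$ for every rank-$1$ saturated subsheaf, where here $H$ denotes the chosen polarization $aH+bE$ of the Definition. (One should fix once and for all which ample class is used; I will use the one with respect to which $\mathcal{E}$ is $\mu$-stable.) A rank-$1$ saturated subsheaf of $\mathcal{F}$ is of the form $\mathcal{I}_Z(D)$ for some divisor class $D$ and a codimension-$\ge 2$ subscheme $Z$, so the task is to bound $D\cdot H^2$.

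First I would argue that any rank-$1$ subsheaf $\mathcal{G}\hookrightarrow\mathcal{F}$ composes with the inclusion $\mathcal{F}\hookrightarrow\mathcal{E}$ to give a rank-$1$ subsheaf of $\mathcal{E}$. Because $\mathcal{E}$ is $\mu$-stable, the saturation of the image in $\mathcal{E}$ has strictly negative slope, hence $\mu(\mathcal{G})\le\mu_{\max}(\mathcal{E})<0$ for the contribution of the $H$-degree — but one must be careful: saturating inside $\mathcal{F}$ versus inside $\mathcal{E}$ can differ, and $\mathcal{F}$ and $\mathcal{E}$ agree away from the curve $\Sigma$, which has codimension $2$, so a rank-$1$ reflexive (hence saturated) subsheaf of $\mathcal{F}$ and its image-saturation in $\mathcal{E}$ have the same $c_1$. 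This is the crux: a saturated rank-$1$ subsheaf $\mathcal{G}\subset\mathcal{F}$ is reflexive, and $\mathcal{G}^{\vee\vee}$ equals the bidual of its image in $\mathcal{E}$ computed there, because biduality is insensitive to modifications along a codimension-$2$ locus; therefore $c_1(\mathcal{G})=c_1(\mathcal{G}')$ where $\mathcal{G}'\subset\mathcal{E}$ is the saturation of the image. Since $\mathcal{E}$ is $\mu$-stable we get $c_1(\mathcal{G}')\cdot H^2 = c_1(\mathcal{G})\cdot H^2 < 0$, which is exactly the destabilizing inequality we need for $\mathcal{F}$.

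The remaining point is the rank-$2$ case and the semistable-versus-stable subtlety: one also needs to rule out a rank-$1$ subsheaf of slope exactly $0$, i.e. with $c_1\cdot H^2=0$. But this is excluded by the same comparison, since for $\mathcal{E}$ the corresponding inequality is strict. I would also note that $\mathcal{F}$ is torsion-free (it is a subsheaf of $\mathcal{E}$), so rank-$0$ subsheaves are not an issue. Assembling: every saturated rank-$1$ subsheaf of $\mathcal{F}$ has strictly negative $H$-degree, hence negative slope, hence $\mathcal{F}$ is $\mu$-stable.

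\begin{proof}
Recall that $\mathcal{F}$ is torsion-free of rank $2$ with $c_1(\mathcal{F})=0$, so $\mu$-stability is equivalent to the condition that every rank-$1$ coherent subsheaf $\mathcal{G}\subset\mathcal{F}$ satisfies $\mu(\mathcal{G})<0$. Fix such a $\mathcal{G}$. Replacing $\mathcal{G}$ by its saturation in $\mathcal{F}$ only increases $\mu(\mathcal{G})$, so we may assume $\mathcal{G}$ is saturated, hence reflexive of rank $1$; thus $\mathcal{G}=\mathcal{G}^{\vee\vee}\cong\mathcal{O}_{\bp3}(D)$ for some divisor $D$, and $\mu(\mathcal{G})$ is determined by $D\cdot H^2$, where $H$ denotes the fixed polarization with respect to which $\mathcal{E}$ is $\mu$-stable.

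The composite $\mathcal{G}\hookrightarrow\mathcal{F}\hookrightarrow\mathcal{E}$ realizes $\mathcal{G}$ as a rank-$1$ subsheaf of $\mathcal{E}$; let $\mathcal{G}'$ be its saturation in $\mathcal{E}$. By the elementary transformation sequence \eqref{transform-sequence}, $\mathcal{F}$ and $\mathcal{E}$ agree on the open complement of $\Sigma$, and $\Sigma$ has codimension $2$ in $\bp3$. Hence the inclusion $\mathcal{G}\hookrightarrow\mathcal{G}'$ is an isomorphism in codimension $1$; since both sheaves are reflexive of rank $1$, they have the same determinant, so $c_1(\mathcal{G})=c_1(\mathcal{G}')$ and in particular $D\cdot H^2=c_1(\mathcal{G}')\cdot H^2$.

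Now $\mathcal{G}'\subset\mathcal{E}$ is a rank-$1$ subsheaf of the $\mu$-stable sheaf $\mathcal{E}$ (which also has $c_1(\mathcal{E})=0$), so $\mu(\mathcal{G}')<\mu(\mathcal{E})=0$, that is $c_1(\mathcal{G}')\cdot H^2<0$. Combining, $\mu(\mathcal{G})$ is governed by $D\cdot H^2=c_1(\mathcal{G}')\cdot H^2<0$, so $\mu(\mathcal{G})<0$. As $\mathcal{G}$ was an arbitrary rank-$1$ subsheaf of $\mathcal{F}$, this shows $\mathcal{F}$ is $\mu$-stable.
\end{proof}
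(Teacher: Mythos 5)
Your proof is correct and follows essentially the same route as the paper, whose own argument is the one-line observation that a destabilizing subsheaf of $\mathcal{F}$ is, via $\mathcal{F}\hookrightarrow\mathcal{E}$, a destabilizing subsheaf of $\mathcal{E}$. Your additional care about saturations and the codimension-$2$ locus $\Sigma$ is not strictly needed (since $\mu$-stability already constrains \emph{every} rank-$1$ subsheaf of $\mathcal{E}$, including the unsaturated image of $\mathcal{G}$), but it makes explicit the point the paper leaves implicit.
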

\begin{proof}
If the elementary transform $\mathcal{F}$ of $\mathcal{E}$ is not stable, it admits a destabilizing sheaf $\mathcal{G},$ hence with $\mu_{L}(G)=0.$ Thus it destabilizes $\mathcal{E},$ which contradicts the hypothesis.
\end{proof}

Recall that instanton bundles $\mathcal{E}$ given in \cite[Construction 7.2]{ccgm} are obtained, via Hartshorne-Serre correspondence, from a union of disjoint lines, so we shall call them \emph{t'Hooft instantons on} $\bp3,$ since they generalize the t'Hooft instantons on $\p3.$ If an instanton is not given by such a construction then we shall say that it is non t'Hooft. 

The t'Hooft instanton bundles, on $\bp3,$ of any admissible charge, are proved to be $\mu_{L}$-stable for \cite[Theorem 1.7]{ccgm}, These are {\em earnest instantons}, that is, they also satisfy the additional condition $h^{1}(\bp3,\mathcal{E}(-h-D))=0,$ for a non empty linear system $|D|$ that contains smooth integral elements \cite[Definition 1.2]{ccgm}. The question of their existence \cite[Question 1.10]{ccgm} has not been answered yet. For the more general instantons bundle, earnest or not, the claim about $\mu_{L}$-stability have been proved positive only for small values of the charge \cite[Proposition 6.4]{ccgm}. The following conclusion, broaden the result in the case of higher values of the charge and non locally free case: 

\begin{corollary}\label{stable-existence}
There exist $\mu_{L}$-stable rank $2$ instanton sheaves (earnest, or not), of any charge $kH^{2}+lE^{2}$ .
\end{corollary}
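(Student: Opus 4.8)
The plan is to combine the existence of $\mu$-stable t'Hooft instanton bundles with the stability-preserving properties of elementary transformations established above. By \cite[Theorem 1.7]{ccgm}, t'Hooft instanton bundles on $\bp3$ are $\mu$-stable and exist for every admissible charge $k_{0}H^{2}+l_{0}E^{2}$. I would take such a bundle $\mathcal{E}$ of some fixed admissible charge, then perform a sequence of elementary transformations along disjoint lines of class $H^{2}$ and of class $E^{2}$ using the data described in Example \ref{examples}(i)--(iii); by Proposition \ref{transform-also-instanton} the result is again a rank $2$ instanton sheaf, with charge increased by $d_{1}H^{2}+d_{2}E^{2}$ where $d_1, d_2$ count how many lines of each type were used, and by Lemma \ref{mu-stable} it remains $\mu$-stable. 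Choosing the number of lines appropriately, I can realize any charge $kH^{2}+lE^{2}$ with $k\geq k_{0}$, $l\geq l_{0}$; since $k_0, l_0$ can be taken minimal among admissible charges (in particular one uses that the set of admissible charges is closed under adding $H^2$ and $E^2$), this covers every admissible charge. Note that the elementary transform of a locally free $\mathcal{E}$ is \emph{strictly} torsion free (Remark \ref{torsion-free-existence}), so these sheaves are genuinely new, and in particular they are not t'Hooft instanton bundles, which are locally free by construction.

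The subtlety I would need to address is the assertion that the constructed sheaves are of \emph{non} t'Hooft type, not merely that they fail to be the specific bundles of \cite[Construction 7.2]{ccgm}. The cleanest argument: any elementary transform along a nonempty curve $\Sigma$ is strictly torsion free by Remark \ref{torsion-free-existence}, whereas t'Hooft instantons, being obtained via Hartshorne--Serre from disjoint lines, are locally free vector bundles; hence the constructed $\mathcal{F}$ is not a t'Hooft instanton. If one instead wants non t'Hooft \emph{bundles} as well, one should start from a non-t'Hooft locally free instanton — but the statement as phrased only asks for instanton sheaves, so the torsion-free examples suffice. I would also remark that one must ensure surjections $\phi$ as in Definition \ref{elementary}(iii) actually exist for the chosen $(\Sigma,\mathcal{L})$; for lines with $\mathcal{L}=\mathcal{O}_L(-1)$ this is standard since $\mathcal{E}$ is globally generated enough along a general line (one can also argue via the monad \eqref{ccgm-like-monad}), and the required surjectivity on $\ho^{1}$ in Definition \ref{elementary}(ii) was already verified in Example \ref{examples}.

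The main obstacle I anticipate is bookkeeping on the set of admissible charges: one must confirm that every pair $(k,l)$ allowed by the numerical constraints (coming e.g. from $\mu$-semistability, the integrality condition in Lemma \ref{intg-condition}(i) with $m=0$, and the monad inequalities $2k-l\geq r=2$, $k-l\geq 0$ in Corollary \ref{ccgm-like-monad}) is reachable from a minimal admissible charge by repeatedly adding $H^{2}$ and $E^{2}$, and that t'Hooft bundles exist at that minimal charge. This is essentially a lattice argument once the admissible region is identified; I would phrase the corollary's proof to simply invoke it. Thus the proof reduces to: fix a minimal-charge t'Hooft bundle $\mathcal{E}$ (exists and is $\mu$-stable by \cite[Theorem 1.7]{ccgm}); iterate elementary transformations along disjoint lines as in Example \ref{examples} to reach the target charge; the outcome is $\mu$-stable by Lemma \ref{mu-stable}, strictly torsion free by Remark \ref{torsion-free-existence}, hence not t'Hooft.
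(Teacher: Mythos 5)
Your argument is essentially correct, but it takes a genuinely different route from the paper, and the difference matters for what the corollary actually establishes. The paper's proof does \emph{not} start from t'Hooft bundles: it invokes \cite[Proposition 6.4]{ccgm}, which guarantees that \emph{all} instanton bundles of charge $kH^{2}+lE^{2}$ with $k-l\leq 14$ are $\mu$-stable, takes a \emph{non-t'Hooft} bundle in that range as the seed, and then fills in the charges with $k-l>14$ by iterated elementary transformations together with Lemma \ref{mu-stable}. You instead seed with a t'Hooft bundle (stable at every admissible charge by \cite[Theorem 1.7]{ccgm}, so no gap-filling in the stability input is needed) and argue that the output is non-t'Hooft simply because it is strictly torsion free while t'Hooft instantons are locally free. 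By the letter of the paper's definition (``non t'Hooft'' means not given by \cite[Construction 7.2]{ccgm}) your deduction is valid, and you are right to flag it as the delicate point. But it weakens the content of the corollary: by the paper's own Proposition \ref{main2}, the sheaves you construct lie in the boundary $\partial\mathcal{I}^{0}_{\bp3}(k',l')$ of the t'Hooft component and are smoothable to t'Hooft bundles, so they are degenerations of t'Hooft instantons rather than examples genuinely outside that family; with your construction every strictly torsion-free instanton would qualify as ``non t'Hooft'' vacuously. The paper's choice of a non-t'Hooft seed ensures the double dual $\mathcal{F}^{\vee\vee}$ (hence the generic behaviour of $\mathcal{F}$) is itself non-t'Hooft, which is the intended meaning. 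You correctly identify the fix yourself (``one should start from a non-t'Hooft locally free instanton''); that fix \emph{is} the paper's proof. One further point in your favour: your concern about the lattice bookkeeping of admissible charges is legitimate and is not addressed by the paper either, which simply asserts that the gaps for $k-l>14$ are filled.
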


\begin{proof}
From \cite[Proposition 6.4]{ccgm}, all instanton bundles of charge $kH^{2}+lE^{2},$ such that $(k-l)\leq 14$ are stable, in particular, if we start with a non earnest instanton, then by, iterated elementary transformations and Lemma \ref{mu-stable}, we fill the gaps for $(k-l)>14.$ 
\end{proof}

If some of these non-locally free $\mu_{L}$-stable sheaves can be deformed to locally free instantons, and provided that they are non earnest, then they might be useful to prove existence of non earnest instanton bundles, in particular $\mu_{L}$-stable one


\section{An application: Smoothability}\label{application}

As an application, of the results above,  we want to study the t'Hooft component of instantons on $\bp3,$ In particular, we will show that the locus of isomorphism classes obtaind, via elementary transformations, from a t'Hooft instanton bundle, are in the boundary of a higher charge t'Hooft component. Thus they are smoothable. Furthermore we will see that such a locus is smooth, hence it cannot be in the intersection of two torsion free components of instantons. This requires the following results, proved in \cite{JMT, JMT1}, for $\p3,$ and which can be generalized easily to $\bp3;$

\begin{proposition}[Lemma 5.1 \cite{JMT1}]\label{hom-quotients}
Let $X$ be a 3-fold, $i:\Sigma\to X$ a reduced, locally complete intersection curve, $M$ a line bundle on $\Sigma$ and $\mathcal{E}$ a rank $2$ locally free sheaf on $X$ equipped with a surjective map $\phi:\mathcal{E}\twoheadrightarrow i_{\ast}M.$ Then for the the torsion free sheaf $\mathcal{F}:=\ker(\phi),$we have an exact sequence \begin{equation}\label{hom-sequence}0\to\mathcal{H}om(\mathcal{F},\mathcal{F})\to\mathcal{H}om(\mathcal{E},\mathcal{E})\to i_{\ast}M^{\otimes 2}\otimes \det(\mathcal{E})^{-1}\to0.\end{equation}
\end{proposition}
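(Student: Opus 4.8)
The plan is to construct the exact sequence \eqref{hom-sequence} directly from the defining short exact sequence $0\to\mathcal{F}\stackrel{\iota}{\to}\mathcal{E}\stackrel{\phi}{\to}i_{\ast}M\to0$ by applying the two functors $\mathcal{H}om(-,\mathcal{E})$ and $\mathcal{H}om(\mathcal{E},-)$ and comparing. First I would observe that since $\mathcal{E}$ is locally free of rank $2$ on a smooth (hence locally factorial) threefold $X$, we have the canonical identification $\mathcal{E}^{\vee}\cong\mathcal{E}\otimes\det(\mathcal{E})^{-1}$, and more generally $\mathcal{H}om(\mathcal{E},\mathcal{E})\cong\mathcal{E}nd(\mathcal{E})$ is locally free. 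Applying $\mathcal{H}om(-,\mathcal{E})$ to the defining sequence and using $\mathcal{E}xt^{q}(i_{\ast}M,\mathcal{E})=\mathcal{E}xt^{q}(i_{\ast}M,\mathcal{O}_{X})\otimes\mathcal{E}$, together with the fact that $i_{\ast}M$ is a pure $1$-dimensional Cohen--Macaulay sheaf on a $3$-fold so that $\mathcal{E}xt^{0}(i_{\ast}M,\mathcal{O}_{X})=0=\mathcal{E}xt^{1}(i_{\ast}M,\mathcal{O}_{X})$ and the first nonzero one is $\mathcal{E}xt^{2}$, I get a short exact sequence $0\to\mathcal{H}om(\mathcal{E},\mathcal{E})\to\mathcal{H}om(\mathcal{F},\mathcal{E})\to\mathcal{E}xt^{1}(i_{\ast}M,\mathcal{E})\to 0$ with $\mathcal{E}xt^{1}(i_{\ast}M,\mathcal{E})=0$; hence $\mathcal{H}om(\mathcal{F},\mathcal{E})\cong\mathcal{H}om(\mathcal{E},\mathcal{E})$.

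Next I would apply $\mathcal{H}om(\mathcal{E},-)$ — equivalently, tensor by the locally free sheaf $\mathcal{E}^{\vee}$ — to the defining sequence, obtaining the short exact sequence
\begin{equation}
0\to\mathcal{H}om(\mathcal{E},\mathcal{F})\to\mathcal{H}om(\mathcal{E},\mathcal{E})\to\mathcal{E}^{\vee}\otimes i_{\ast}M\to0,
\end{equation}
and then identify the quotient: by the projection formula $\mathcal{E}^{\vee}\otimes i_{\ast}M\cong i_{\ast}(i^{\ast}\mathcal{E}^{\vee}\otimes M)$. To bring in $\det(\mathcal{E})^{-1}$ I would use that $i^{\ast}\mathcal{E}^{\vee}\cong i^{\ast}\mathcal{E}\otimes i^{\ast}\det(\mathcal{E})^{-1}$ and that the composite $i^{\ast}\mathcal{E}\twoheadrightarrow M$ induced by $\phi$ (which is surjective because $i^{\ast}$ is right exact and $\phi$ is onto $i_{\ast}M$) allows one to split off a copy of $M$; more precisely, the map $\mathcal{H}om(\mathcal{E},\mathcal{E})\to\mathcal{E}^{\vee}\otimes i_{\ast}M$ sends $\psi$ to $\phi\circ\psi$, so its image, read through the surjection $\phi$, is naturally $i_{\ast}(M\otimes i^{\ast}\det(\mathcal{E})^{-1}\otimes M)=i_{\ast}(M^{\otimes 2})\otimes\det(\mathcal{E})^{-1}$ after using $i^{\ast}\det(\mathcal{E})^{-1}$ is a line bundle and the projection formula once more. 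Combining this with the isomorphism $\mathcal{H}om(\mathcal{F},\mathcal{E})\cong\mathcal{H}om(\mathcal{E},\mathcal{E})$ from the previous paragraph gives exactly \eqref{hom-sequence}, since $\mathcal{H}om(\mathcal{F},\mathcal{F})$ sits inside $\mathcal{H}om(\mathcal{F},\mathcal{E})\cong\mathcal{H}om(\mathcal{E},\mathcal{E})$ as the kernel of the induced map to $i_{\ast}M^{\otimes 2}\otimes\det(\mathcal{E})^{-1}$ — one checks a morphism $\mathcal{F}\to\mathcal{E}$ factors through $\mathcal{F}$ iff its composite with $\phi$ (landing in $i_{\ast}M$, then the relevant quotient) vanishes.

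The cleanest way to organize all this is to work with the commutative $3\times 3$-style diagram obtained by applying $\mathcal{H}om(-,-)$ in both variables to the two short exact sequences simultaneously: the rows are the sequences above, and a diagram chase (or the snake lemma) identifies $\mathcal{H}om(\mathcal{F},\mathcal{F})$ as the kernel of $\mathcal{H}om(\mathcal{E},\mathcal{E})\to i_{\ast}M^{\otimes 2}\otimes\det(\mathcal{E})^{-1}$ and shows the map is surjective. I expect the main obstacle to be bookkeeping rather than anything deep: one must be careful that the vanishing $\mathcal{E}xt^{1}(i_{\ast}M,\mathcal{O}_{X})=0$ really holds (it does, because $\Sigma$ is a locally complete intersection curve in a smooth $3$-fold, so $i_{\ast}M$ is Cohen--Macaulay of codimension $2$ and its only nonvanishing $\mathcal{E}xt$ into $\mathcal{O}_X$ is in degree $2$), and that the identification of the cokernel as $i_{\ast}M^{\otimes 2}\otimes\det(\mathcal{E})^{-1}$ is canonical — this is where the rank-$2$ hypothesis is essential, via $\mathcal{E}^{\vee}\cong\mathcal{E}\otimes\det(\mathcal{E})^{-1}$ and the self-duality that turns $\phi$ and its "transpose'' into the two copies of $M$. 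Once those two points are pinned down, the exactness of \eqref{hom-sequence} and the surjectivity on the right follow formally.
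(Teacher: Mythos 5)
The paper offers no proof of this proposition: it is imported verbatim as Lemma 5.1 of \cite{JMT1} with the remark that the $\p3$ argument carries over to $\bp3$, so there is nothing in-paper to compare against, and your proposal must stand on its own. On its own it is essentially the standard argument and is sound in substance: the isomorphism $\mathcal{H}om(\mathcal{F},\mathcal{E})\cong\mathcal{H}om(\mathcal{E},\mathcal{E})$ coming from $\mathcal{E}xt^{q}(i_{\ast}M,\mathcal{O}_{X})=0$ for $q\leq 1$ (valid because $\Sigma$ is a codimension-two locally complete intersection, so $i_{\ast}M$ has a local Koszul-type resolution of length two), and the characterization of $\mathcal{H}om(\mathcal{F},\mathcal{F})$ inside $\mathcal{H}om(\mathcal{E},\mathcal{E})$ as the kernel of $\psi\mapsto\phi\circ\psi|_{\mathcal{F}}$, are exactly the two key points. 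One sentence in your second paragraph is literally false as written: the map $\mathcal{H}om(\mathcal{E},\mathcal{E})\to\mathcal{E}^{\vee}\otimes i_{\ast}M$, $\psi\mapsto\phi\circ\psi$, is surjective, so its image is the whole rank-two sheaf $i_{\ast}(i^{\ast}\mathcal{E}^{\vee}\otimes M)$ and not $i_{\ast}M^{\otimes 2}\otimes\det(\mathcal{E})^{-1}$. What you need, and evidently intend, is the composite with a further quotient: setting $K:=\ker(i^{\ast}\mathcal{E}\to M)\cong i^{\ast}\det(\mathcal{E})\otimes M^{-1}$, dualizing $0\to K\to i^{\ast}\mathcal{E}\to M\to 0$ and tensoring with $M$ gives $0\to\mathcal{O}_{\Sigma}\to i^{\ast}\mathcal{E}^{\vee}\otimes M\to M^{\otimes 2}\otimes i^{\ast}\det(\mathcal{E})^{-1}\to 0$, where the subsheaf $i_{\ast}\mathcal{O}_{\Sigma}=i_{\ast}\mathcal{H}om(M,M)$, embedded by precomposition with $\phi$, is precisely the sheaf of maps $\mathcal{E}\to i_{\ast}M$ that kill $\mathcal{F}$; hence $\psi(\mathcal{F})\subseteq\mathcal{F}$ if and only if $\phi\circ\psi$ maps to zero in $i_{\ast}M^{\otimes 2}\otimes\det(\mathcal{E})^{-1}$, which is the asserted sequence together with its right-exactness. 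Once that identification of the kernel of the quotient map with $\mathcal{H}om(i_{\ast}M,i_{\ast}M)$ is made explicit, your proof is complete, and the rank-two hypothesis indeed enters only through $\mathcal{E}^{\vee}\cong\mathcal{E}\otimes\det(\mathcal{E})^{-1}$, as you observe.
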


By using the result above one can generalize \cite[Lemma 5.2]{JMT1} to get the following;

\begin{lemma}\label{smooth-elementary}
Let $\mathcal{E}$ be a $\mu$-stable instanton bundle of charge $kH^{2}+lE^{2}$ on $\bp3,$ and $(\Sigma,\mathcal{L},\phi)$ be an elementary transformation data for $\mathcal{E},$ with $\ho^{\ast}(i_{\ast}\mathcal{L}^{\otimes 2}\otimes\mathcal{O}_{\bp3}(4,-2))=0,$ then The elementary transform $\mathcal{F}=\ker(\phi)$ satisfies:

\begin{itemize}
    \item [(i)]$\ext^{1}(\mathcal{F},\mathcal{F})=\ho^{0}(\mathcal{E}xt^{1}(\mathcal{F},\mathcal{F}))\oplus\ho^{1}(\mathcal{H}om(\mathcal{F},\mathcal{F}));$
    \item[(ii)]$\ext^{2}(\mathcal{F},\mathcal{F})=\ho^{1}(\mathcal{E}xt^{1}(\mathcal{F},\mathcal{F}))\cong\ho^{1}(\mathcal{E}xt^{2}(i_{\ast}\mathcal{L}\otimes\mathcal{O}_{\bp3}(2,-1),\mathcal{F}));$ 
    \item[(iii)]$h^{1}(\mathcal{H}om(\mathcal{F},\mathcal{F}))=8k-4l-3+h^{0}(i_{\ast}\mathcal{L}^{\otimes 2}\otimes\mathcal{O}_{\bp3}(4,-2)).$
\end{itemize}
\end{lemma}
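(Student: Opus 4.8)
The plan is to exploit the elementary transformation sequence \eqref{transform-sequence} together with Proposition \ref{hom-quotients} and the local-to-global Ext spectral sequence. First I would apply $\mathcal{H}om(-,\mathcal{F})$ and $\mathcal{H}om(\mathcal{F},-)$ to \eqref{transform-sequence} to identify the sheaf-level Ext's: since $\mathcal{E}$ is locally free we get $\mathcal{E}xt^{p}(\mathcal{E},\mathcal{F})=0$ for $p\geq 1$, so $\mathcal{E}xt^{1}(\mathcal{F},\mathcal{F})\cong\mathcal{E}xt^{2}(i_{\ast}\mathcal{L}\otimes\mathcal{O}_{\bp3}(2,-1),\mathcal{F})$ and $\mathcal{E}xt^{p}(\mathcal{F},\mathcal{F})=0$ for $p\geq 2$ (the latter because $i_{\ast}\mathcal{L}$ is supported in dimension $1$ on a locally complete intersection curve, so its $\mathcal{E}xt$-dimension is bounded by the codimension $2$, and $\mathcal{F}$ is locally free off $\Sigma$). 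In particular $\mathcal{H}om(\mathcal{F},\mathcal{F})$ has cohomological dimension considerations that make the local-to-global spectral sequence for $\ext^{\bullet}(\mathcal{F},\mathcal{F})$ collapse into the short pieces claimed in items (i) and (ii): the only nonzero rows are $q=0$ and $q=1$, and the differential $d_{2}\colon\ho^{0}(\mathcal{E}xt^{1})\to\ho^{2}(\mathcal{H}om)$ needs to be shown to vanish, which is where \eqref{hom-sequence} and the hypothesis $\ho^{\ast}(i_{\ast}\mathcal{L}^{\otimes 2}\otimes\mathcal{O}_{\bp3}(4,-2))=0$ enter.

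Next, for the cohomology of $\mathcal{H}om(\mathcal{F},\mathcal{F})$ I would take the long exact sequence in cohomology of \eqref{hom-sequence}. The hypothesis forces $\ho^{\ast}(i_{\ast}\mathcal{L}^{\otimes 2}\otimes\det(\mathcal{E})^{-1})=0$ — here one uses $\det(\mathcal{E})=\mathcal{O}_{\bp3}$ since $c_{1}(\mathcal{E})=0$, wait, more precisely $M=\mathcal{L}\otimes\mathcal{O}_{\bp3}(2,-1)$ so $M^{\otimes 2}\otimes\det(\mathcal{E})^{-1}=\mathcal{L}^{\otimes 2}\otimes\mathcal{O}_{\bp3}(4,-2)$, matching the hypothesis exactly. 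So $\ho^{i}(\mathcal{H}om(\mathcal{F},\mathcal{F}))\cong\ho^{i}(\mathcal{H}om(\mathcal{E},\mathcal{E}))$ for all $i$, except one must be careful: the sequence \eqref{hom-sequence} gives $\ho^{i}(\mathcal{H}om(\mathcal{F},\mathcal{F}))\cong\ho^{i}(\mathcal{E}nd(\mathcal{E}))$ once the quotient term is acyclic. Since $\mathcal{E}$ is a $\mu$-stable bundle, $\ho^{0}(\mathcal{E}nd(\mathcal{E}))=\mathbb{C}$ and $\ho^{3}(\mathcal{E}nd(\mathcal{E}))=\ho^{0}(\mathcal{E}nd(\mathcal{E})\otimes\omega_{\bp3})^{\ast}=0$ by stability and Serre duality, so $\chi(\mathcal{E}nd(\mathcal{E}))=1-h^{1}+h^{2}$. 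Then I would compute $\chi(\mathcal{E}nd(\mathcal{E}))$ via Riemann-Roch \eqref{euler} using $c_{1}(\mathcal{E})=0$, $c_{2}(\mathcal{E})=kH^{2}+lE^{2}$ (so $c_{2}(\mathcal{E}nd(\mathcal{E}))=4c_{2}(\mathcal{E})$, i.e.\ the $(k,l)$ of $\mathcal{E}nd(\mathcal{E})$ are $4k$ and $4l$, and $\mathrm{rk}=4$, $c_{1}=0$, $m=0$), which should yield $\chi=-8k+4l+\text{const}$; combined with the vanishing of $h^{0}-1$ and $h^{2}$ this pins down $h^{1}(\mathcal{E}nd(\mathcal{E}))=8k-4l-3$. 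For item (iii), the extra term $h^{0}(i_{\ast}\mathcal{L}^{\otimes 2}\otimes\mathcal{O}_{\bp3}(4,-2))$ is there to keep the statement robust even when the stated vanishing hypothesis is only partially invoked, so I would phrase the proof of (iii) by running the long exact sequence of \eqref{hom-sequence} without assuming the full vanishing and reading off $h^{1}(\mathcal{H}om(\mathcal{F},\mathcal{F}))=h^{1}(\mathcal{E}nd(\mathcal{E}))+h^{0}(i_{\ast}\mathcal{L}^{\otimes 2}\otimes\mathcal{O}_{\bp3}(4,-2))-h^{0}(\text{connecting term correction})$, then checking the connecting and $\ho^{2}$ corrections vanish.

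I expect the main obstacle to be the vanishing of the $d_{2}$ differential in the local-to-global spectral sequence, i.e.\ showing $\ext^{2}(\mathcal{F},\mathcal{F})\cong\ho^{1}(\mathcal{E}xt^{1}(\mathcal{F},\mathcal{F}))$ with no contribution from $\ho^{2}(\mathcal{H}om(\mathcal{F},\mathcal{F}))$ and that the latter is actually $\ho^{1}(\mathcal{E}xt^{1})$ rather than being killed. Concretely one must show $\ho^{2}(\mathcal{H}om(\mathcal{F},\mathcal{F}))=0$: by the isomorphism with $\ho^{2}(\mathcal{E}nd(\mathcal{E}))$ this is $\ho^{2}(\mathcal{E}nd(\mathcal{E}))=\ho^{1}(\mathcal{E}nd(\mathcal{E})\otimes\omega_{\bp3})^{\ast}$, which vanishes because the instanton bundle $\mathcal{E}$ is simple and rigid-enough — this is exactly the place where the t'Hooft / $\mu$-stability input from \cite{ccgm} is needed, and it should follow from $\mathcal{E}$ being an instanton bundle (using the vanishing $\ho^{1}(\mathcal{E}(-q_{X}h))=0$ and Serre duality on $\bp3$, as in \cite[Proposition 5.2]{ccgm}). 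Once $\ho^{2}(\mathcal{E}nd(\mathcal{F}))=0$ is in hand, the spectral sequence degenerates for degree reasons and items (i) and (ii) are immediate; the identification $\mathcal{E}xt^{1}(\mathcal{F},\mathcal{F})\cong\mathcal{E}xt^{2}(i_{\ast}\mathcal{L}\otimes\mathcal{O}_{\bp3}(2,-1),\mathcal{F})$ in (ii) is the sheafy long exact sequence noted above, and the rest is bookkeeping with Riemann-Roch.
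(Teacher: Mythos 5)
Your proposal is correct and follows essentially the same route as the paper: the local-to-global spectral sequence for $\ext^{\bullet}(\mathcal{F},\mathcal{F})$ degenerates because $\mathcal{E}xt^{q}(\mathcal{F},\mathcal{F})=0$ for $q\geq 2$ and $\mathcal{E}xt^{1}(\mathcal{F},\mathcal{F})$ is supported on the curve $\Sigma$, the vanishing of $\ho^{2,3}(\mathcal{H}om(\mathcal{F},\mathcal{F}))$ and the count in (iii) come from the long exact cohomology sequence of \eqref{hom-sequence} together with $\ho^{2,3}(\mathcal{H}om(\mathcal{E},\mathcal{E}))=0$, and the identification in (ii) comes from applying $\mathcal{H}om(\bullet,\mathcal{F})$ to \eqref{transform-sequence}. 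The only cosmetic difference is that you recover $h^{1}(\mathcal{H}om(\mathcal{E},\mathcal{E}))=8k-4l-3$ by Riemann--Roch plus simplicity, where the paper simply cites \cite[Theorem 1.7]{ccgm}; your observation that only $\ho^{1}(i_{\ast}\mathcal{L}^{\otimes 2}\otimes\mathcal{O}_{\bp3}(4,-2))=0$ is actually used (so that the $h^{0}$ term in (iii) can be nonzero) matches how the lemma is applied later in the paper.
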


\begin{proof}
As in the proof for \cite[Lemma 5.2]{JMT1}, we shall use the local-to-global spectral sequence with $E_{2}$-term $$E_{2}^{pq}=\ho^{p}(\mathcal{E}xt^{q}(\mathcal{F},\mathcal{F}))$$ given by the table:
\begin{equation}\label{table}
\begin{tabular}{|p{2.5cm}|p{2.5cm}|p{0.5cm}|p{0.5cm}|}\hline
   0  & 0 & 0 & 0\\ \hline 
    0  & 0 & 0 & 0\\ \hline 
    $\ho^{1}(\mathcal{H}om(\mathcal{F},\mathcal{F}))$ & $\ho^{1}(\mathcal{E}xt^{1}(\mathcal{F},\mathcal{F}))$ & 0 & 0\\ \hline 
    $\ho^{0}(\mathcal{H}om(\mathcal{F},\mathcal{F}))$ & $\ho^{0}(\mathcal{E}xt^{1}(\mathcal{F},\mathcal{F}))$ & 0 & 0\\ \hline 
\end{tabular}
\end{equation}

The utmost right column and the one on its left are formed from zero terms because $\mathcal{E}xt^{q}(\mathcal{F},\mathcal{F})=0$ for $q=2,3.$ This can be seen by applying $\mathcal{H}om(\bullet ,\mathcal{F})$ on the left column of display \eqref{display}. The vanishing of the terms in the third column from the right are obtained by the fact that $\mathcal{E}xt^{1}(\mathcal{F},\mathcal{F})$ is supported on the curve $\Sigma.$ 
By applying the cohomology functor on the sequence \eqref{hom-sequence} one obtains 

\begin{align}
    0&\to\ho^{0}(\mathcal{H}om(\mathcal{F},\mathcal{F}))\to\ho^{0}(\mathcal{H}om(\mathcal{E},\mathcal{E}))\to \ho^{0}( i_{\ast}\mathcal{L}^{\otimes 2}\otimes \mathcal{O}_{\bp3}(4,-2)) \label{long} \\
&\to\ho^{1}(\mathcal{H}om(\mathcal{F},\mathcal{F}))\to\ho^{1}(\mathcal{H}om(\mathcal{E},\mathcal{E}))\to 0    \notag \\
&\ho^{2}(\mathcal{H}om(\mathcal{F},\mathcal{F}))\to\ho^{2}(\mathcal{H}om(\mathcal{E},\mathcal{E}))=0 \label{ext2} \\
&\ho^{3}(\mathcal{H}om(\mathcal{F},\mathcal{F}))\to\ho^{3}(\mathcal{H}om(\mathcal{E},\mathcal{E}))=0 \label{ext3}
\end{align}

Since $\ho^{1}( i_{\ast}\mathcal{L}^{\otimes 2}\otimes\mathcal{O}_{\bp3}(4,-2))$ vanishes by hypothesis.
Furthermore, the vanishings \eqref{ext2} and \eqref{ext3} hold because $\mathcal{E}$ is a locally free instanton sheaf on $\bp3$ and the result in \cite[Theorem 1.7]{ccgm}. At this stage, we already see that the above spectral sequence degenerates at the second step. Hence item $(i)$ and the first equality in item $(ii)$ follow immediately. 
The last isomorphism in item $(ii)$ is obtained by applying the functor $\mathcal{H}om(\bullet,\mathcal{F})$ to the sequence \eqref{transform-sequence}.
Finally, item $(iii)$ follows from sequence \eqref{long}; since $\mathcal{E}$ is $\mu$-stable, then by Corollary \eqref{stable-existence}, also $\mathcal{F}$ is $\mu$-stable, hence both are simple. Hence the formula follows immediately from \cite[Theorem 1.7]{ccgm}.

\end{proof}

\begin{lemma}\label{surjective-family}\cite[Lemma 7.1]{JMT1}
Let $C$ be a smooth irreducible curve with marked point $0,$ and set $\mathbf{B}:=C\times X,$ where $X$ is a smooth threefold. Let $\mathbf{F}$ and $\mathbf{G}$ be $\mathcal{\mathbf{B}}$-sheaves, flat over $C$ and such that $\mathbf{F}$ is locally free along ${\rm Supp}(\mathbf{G}).$ Denote $X_{t}=\{t\}\times X,$ and $$F_{t}=\mathbf{F}|_{X_{t}}, \qquad G_{t}=\mathbf{G}|_{X_{t}} \textnormal{ for } t\in C.$$ 
Assume that, for each $t\in C,$ $$\ho^{i}(\mathcal{H}om(F_{t},G_{t}))=0, \quad i\geq1.$$
Assume that $s:F_{0}\to G_{0}$ is an epimorphism. Then, after possibly shrinking $C,$ the map $s$ extends to an epimorphism $\mathbf{s}:\mathbf{F}\twoheadrightarrow\mathbf{G}.$
\end{lemma}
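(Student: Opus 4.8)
The statement is a standard semicontinuity-and-extension argument, so the plan is to reduce it to the lifting of a global section. The point is that the epimorphism $s : F_0 \to G_0$ is a particular element of $\Hom(F_0, G_0) = \ho^0(X_0, \mathcal{H}om(F_0,G_0))$ and we want to fit it into a section of $\mathcal{H}om(\mathbf{F},\mathbf{G})$ over a neighborhood of $0$ in $\mathbf{B}$. First I would set $\mathcal{H} := \mathcal{H}om_{\mathbf{B}}(\mathbf{F},\mathbf{G})$ and observe that, because $\mathbf{F}$ is locally free along $\mathrm{Supp}(\mathbf{G})$ and $\mathbf{G}$ is flat over $C$, the sheaf $\mathcal{H}$ is flat over $C$ and commutes with base change: $\mathcal{H}|_{X_t} \cong \mathcal{H}om_{X_t}(F_t,G_t)$ for every $t \in C$ (here only the formation of $\mathcal{H}om$ — not higher $\mathcal{E}xt$ — is needed, and that is insensitive to the singularities of $\mathbf{G}$ away from $\mathrm{Supp}(\mathbf{G})$, while along the support $\mathbf{F}$ is locally free). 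Combined with the hypothesis $\ho^i(\mathcal{H}om(F_t,G_t)) = 0$ for all $i \ge 1$ and all $t$, the Grauert/cohomology-and-base-change machinery gives that $\pi_\ast \mathcal{H}$ is locally free on $C$ and that its formation commutes with base change, so after shrinking $C$ to an affine neighborhood of $0$ the restriction map $\ho^0(\mathbf{B}, \mathcal{H}) \to \ho^0(X_0, \mathcal{H}|_{X_0})$ is surjective.

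**Lifting and the open condition.** Next I would lift $s \in \ho^0(X_0, \mathcal{H}om(F_0,G_0))$ to a section $\mathbf{s} \in \ho^0(\mathbf{B},\mathcal{H}om(\mathbf{F},\mathbf{G}))$, i.e. a morphism $\mathbf{s} : \mathbf{F} \to \mathbf{G}$ of $\mathbf{B}$-sheaves restricting to $s$ on $X_0$. It remains to see that $\mathbf{s}$ is surjective after possibly shrinking $C$ again. Surjectivity of a morphism of coherent sheaves is an open condition on the base in the following sense: $\mathrm{coker}(\mathbf{s})$ is a coherent sheaf on $\mathbf{B}$, its support is closed, and by hypothesis it misses the fiber $X_0$ (since $s$ is an epimorphism and $\mathbf{G}$ is flat over $C$, so $\mathbf{G}|_{X_0} = G_0$ and $\mathrm{coker}(\mathbf{s})|_{X_0}$ is a quotient of $\mathrm{coker}(s) = 0$). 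The image of $\mathrm{Supp}(\mathrm{coker}(\mathbf{s}))$ under the proper projection $\mathbf{B} = C \times X \to C$ is a closed subset of $C$ not containing $0$; removing it shrinks $C$ to a neighborhood of $0$ over which $\mathbf{s}$ is surjective. One small point to be careful about: $\mathrm{coker}(\mathbf{s})|_{X_0}$ is only a priori a quotient of $\mathrm{coker}(s)$ when $\mathbf{G}$ restricts correctly, which is exactly what flatness of $\mathbf{G}$ over $C$ guarantees (right-exactness of $- \otimes_{\mathcal{O}_C} \kappa(0)$ applied to $\mathbf{F} \to \mathbf{G} \to \mathrm{coker}(\mathbf{s}) \to 0$ gives $F_0 \to G_0 \to \mathrm{coker}(\mathbf{s})|_{X_0} \to 0$, and the composite $F_0 \to G_0$ is $s$).

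**Main obstacle.** The only genuinely delicate step is the base-change claim for $\mathcal{H} = \mathcal{H}om_{\mathbf{B}}(\mathbf{F},\mathbf{G})$ — namely that $\mathcal{H}$ is flat over $C$ with $\mathcal{H}|_{X_t} = \mathcal{H}om_{X_t}(F_t,G_t)$. This is where the hypothesis "$\mathbf{F}$ is locally free along $\mathrm{Supp}(\mathbf{G})$" does the work: étale-locally near a point of $\mathrm{Supp}(\mathbf{G})$ one has $\mathbf{F} \cong \mathcal{O}_{\mathbf{B}}^{\oplus 2}$, so $\mathcal{H} \cong \mathbf{G}^{\oplus 2}$ there, which is flat over $C$ and base-changes correctly because $\mathbf{G}$ is; away from $\mathrm{Supp}(\mathbf{G})$ both $\mathcal{H}$ and $\mathbf{G}$ vanish, and the same vanishing holds on every fiber since $\mathrm{Supp}(G_t) \subseteq \mathrm{Supp}(\mathbf{G}) \cap X_t$. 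Once this is in hand, Grauert's theorem applies verbatim using the fiberwise vanishing $\ho^{\ge 1}(\mathcal{H}om(F_t,G_t)) = 0$, and everything else is the formal shrinking argument above. I would organize the write-up as: (1) base change for $\mathcal{H}om$; (2) Grauert $\Rightarrow$ surjectivity of $\ho^0(\mathbf{B},\mathcal{H}) \to \ho^0(X_0,\mathcal{H}|_{X_0})$ after shrinking; (3) lift $s$ to $\mathbf{s}$; (4) cokernel support is closed and avoids $0$, shrink again.
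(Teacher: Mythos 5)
The paper does not actually prove this lemma: it is imported verbatim from \cite[Lemma 7.1]{JMT1}, with only the remark that the argument given there for $\mathbb{P}^3$ carries over to any smooth (projective) threefold. Your proof is the standard cohomology-and-base-change argument --- flatness and base change for $\mathcal{H}om(\mathbf{F},\mathbf{G})$ via local freeness of $\mathbf{F}$ along $\mathrm{Supp}(\mathbf{G})$, the fibrewise vanishing of $\ho^{\geq 1}$ to lift $s$ to a section over a neighbourhood of $0$, and properness of $C\times X\to C$ to make surjectivity an open condition on the base --- it is correct, and it is essentially the argument of the cited source, so it supplies exactly what the paper delegates to the reference.
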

The above result was originally written for $X=\p3,$ but the proof is valid for any smooth threefold.


We recall, from \cite[Construction 7.2]{ccgm}, that a t'Hooft instanton bundles $\mathcal{E}$ fits in a sequence:
\begin{equation}\label{t'hooft-sequence}
    0\to\mathcal{O}_{\bp3}(-1,1)\to\mathcal{E}\to\mathcal{I}_{X}(1,-1)\to0,
\end{equation}
where $\mathcal{I}_{X}$ is the ideal sheaf of a scheme $$X:=\bigcup_{i=1}^{k-l}C_{i} \cup \bigcup_{j=1}^{l+1}L_{j},$$ 
in which $C_{i}'$s are conics obtained by pull-back of lines in $\p3,$ that do not meet the blow-up point and $L_{j}'$s are fibers of the canonical projection $\bp3\to\p2.$ All of them chosen to be pairwise disjoint. Moreover, such bundles are $\mu$-stable and generically trivial, i.e., $\ho^{1}(L,\mathcal{E}(-2,1)\otimes\mathcal{O}_{L})=0,$ for a general line $L$ in $\bp3,$ and equivalently $\mathcal{E}|_{L}\cong\mathcal{O}_{L}^{\oplus2}.$ Moreover, For each $k,l\geq0$ such that $2k-l\geq2,$ we have an irreducible component $\mathcal{I}^{0}_{\bp3}(k,l)$ of t'Hooft instanton bundles with charge $kH^{2}+lE^{2},$ \cite[Theorem 1.8]{ccgm}.

Before we head to the main result of this section, we shall prove the following 

\begin{lemma}\label{stab-rest-H} Let $\mathcal{E}$ be an instanton bundle, on $\bp3,$ whose restriction $\mathcal{E}|_{E}$ to the exceptional divisor is $\mu$-semi-stable, then its restriction, $\mathcal{E}|_{S},$ to the generic element $S\in|H|,$ in $\bp3,$ is also $\mu$-semi-stable.
In particular, the above assertion is true for every t'Hooft instanton bundle on $\bp3.$ 
\end{lemma}
\begin{proof}
considering the restriction sequence for $S$ twisted by $\mathcal{E}(-1,0)$ and using the fact that $E$ is $\mu_{L}$-stable, one gets the following exact sequence in cohomology:
$$0\to\ho^{0}(\mathcal{E}|_{S}(-1))\to\ho^{1}(\mathcal{E}(-2,0))\to\ho^{1}(\mathcal{E})\to\ho^{1}(\mathcal{E}|_{S})\to0.$$
$S$ is isomorphic to a plane in $\p3,$ and by the Hoppe criteria \cite[Ch.II, Lemma 1.2.5]{oss} it suffices to show that the utmost left term in the sequence above is zero. This is done by considering the sequence 
$$0\to\mathcal{E}(-2,0)\to\mathcal{E}(-2,1)\to\mathcal{E}|_{E}(-1)\to0.$$ Since $\mathcal{E}|_{E}$ is $\mu$-semi-stable \cite[\S 6]{ccgm} then $\ho^{0}(\mathcal{E}|_{E}(-1))$ and the map $$\ho^{1}(\mathcal{E}(-2,0))\to\ho^{1}(\mathcal{E}(-2,1))$$  is injective. Moreover, the instantonic conditions $\ho^{1}\mathcal{E}(-2,1))=0$ implies that $\ho^{1}(\mathcal{E}(-2,0))$ is trivial and hence $\ho^{0}(\mathcal{E}|_{S}(-1))=0.$
 The last assertion follows from the fact that every t'Hooft instanton bundle is trivial on the the generic line, contained in the exceptional divisor \cite[Theorem 1.7]{ccgm}.  
\end{proof}

\begin{corollary}\label{triv-split-bdle}
An instanton bundle $\mathcal{E},$ on $\bp3,$ whose restriction $\mathcal{E}|_{E}$ to the exceptional divisor is $\mu$-semi-stable, is of trivial splitting type on the generic conic $C$ given by the pull-back of a generic line in $\p3.$
\end{corollary}
\begin{proof}
This follows from the $\mu$-semi-stability of $\mathcal{E}_{S},$ for a surface  $S\in |H|,$ containing $C,$ and the Grauert-M\"ulich Thereom \cite[Ch.II, \S2, Corollary 2]{oss}
\end{proof}

\begin{corollary}\label{triv-split-shf}
Let $\mathcal{F}$ be an elementary transform of an instanton bundle $\mathcal{E},$ on $\bp3,$ whose restriction $\mathcal{E}|_{E}$ to the exceptional divisor is $\mu$-semi-stable then it is also of trivial splitting type on the generic conic $C$ given by the pull-back of a generic line in $\p3.$
\end{corollary}
\begin{proof}
The generic conic $C$ does not intersect the curve $\sigma,$ along which the elementary transform is realized, so $\mathcal{F}|_{C}\cong\mathcal{E}|_{C}=\mathcal{O}_{C}^{\oplus 2}.$ The last equality follows from Corollary \ref{triv-split-bdle}.   
\end{proof}

The generic splitting on the conic $C$ can be seen by restricting the sequence $$0\to\mathcal{O}_{\bp3}(-1,1)\to\mathcal{E}\to I_{X}(1,-1)\to0$$ to $C,$ and using the facts that $[C]\cdot(H-E)=H^{3}$ and $C\cap X=\emptyset.$ thus $\mathcal{E}|_{C}=\mathcal{O}_{C}^{\oplus2}.$

Our next result is to show that instanton sheaves obtained via elementary transformations, by lines, from t'Hooft instanton bundles of charge $kH^{2}+lE^{2}$ are in fact in the boundary $\partial\mathcal{I}^{0}_{\bp3}(k',l')$ of locally free t'Hooft instanton component of higher charge $k'H^{2}+l'E^{2}.$ We will say that a union of lines is of type $(d_{1},d_{2}),$ if it is of the form $$\bigcup_{i=1}^{d_{1}}C_{i} \cup \bigcup_{j=1}^{d_{2}}L_{j}.$$

\begin{proposition}\label{main2}
Let $\mathcal{F}$ be an elementary transform of a t'Hooft instanton bundle of charge $kH^{2}+lE^{2},$ on $\bp3,$ with respect to an elementary data $(\Sigma,\mathcal{L},\phi),$ in which $\Sigma$ is a union of lines of type $(d_{1},d_{2}),$ with $d_{1}+d_{2}=r.$ Then $[\mathcal{F}]\in \partial\mathcal{I}^{0}_{\bp3}(k+r,l+d_{2})$ is a smooth point.

\end{proposition}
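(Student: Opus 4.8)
The plan is to run the standard deformation-theoretic argument of Jardim--Markushevich--Tikhomirov, adapted to $\bp3$, in two stages: first show that $[\mathcal{F}]$ is a smooth point of whatever component of the moduli space of semistable torsion-free sheaves contains it, and then exhibit $\mathcal{F}$ as a flat limit of locally free t'Hooft instanton bundles of charge $(k+r)H^2+(l+d_2)E^2$, so that $[\mathcal{F}]\in\partial\mathcal{I}^0_{\bp3}(k+r,l+d_2)$. For the smoothness part I would invoke Lemma \ref{smooth-elementary}: since $\mathcal{F}$ is simple, the Zariski tangent space to the moduli space at $[\mathcal{F}]$ is $\ext^1(\mathcal{F},\mathcal{F})$ and the obstruction space is $\ext^2(\mathcal{F},\mathcal{F})$, and smoothness follows once I check that the expected dimension equals $\dim\ext^1(\mathcal{F},\mathcal{F})$, equivalently that the obstruction map vanishes. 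The hypothesis of Lemma \ref{smooth-elementary} that $\ho^{\ast}(i_{\ast}\mathcal{L}^{\otimes 2}\otimes\mathcal{O}_{\bp3}(4,-2))=0$ must first be verified for $\Sigma$ a union of lines of type $(d_1,d_2)$ and $\mathcal{L}$ the line bundle used: on each component line $L$ (of class $H^2$ or $E^2$) the restriction of $i_{\ast}\mathcal{L}^{\otimes 2}\otimes\mathcal{O}_{\bp3}(4,-2)$ is $\mathcal{O}_L(\deg)$ with $\deg=-2$ by the genus/degree bookkeeping already done in the proof of Proposition \ref{transform-also-instanton} (indeed $\mathcal{L}=\mathcal{O}_L(-1)$ there, so $\mathcal{L}^{\otimes2}\otimes i^{\ast}\mathcal{O}(4,-2)$ has degree $-2$ on each $\mathbb{P}^1$), hence all cohomology vanishes. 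With this in hand, Lemma \ref{smooth-elementary}(ii) gives $\ext^2(\mathcal{F},\mathcal{F})\cong\ho^1(\mathcal{E}xt^2(i_{\ast}\mathcal{L}\otimes\mathcal{O}_{\bp3}(2,-1),\mathcal{F}))$, which I would compute directly on $\Sigma$ (using $\mathcal{E}xt^2$ of a line-supported sheaf into a locally free-along-$\Sigma$ sheaf, via the normal bundle of each line in $\bp3$) and show it vanishes; for lines this is a genus-$0$ computation and should come out zero. Then $\ext^2(\mathcal{F},\mathcal{F})=0$ forces $[\mathcal{F}]$ to be a smooth point.

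Next, for the smoothability, the strategy is Lemma \ref{surjective-family}: I would spread the elementary datum out over a curve. Start with the t'Hooft bundle $\mathcal{E}$ sitting in its component $\mathcal{I}^0_{\bp3}(k,l)$ and with the fixed surjection $\phi:\mathcal{E}\twoheadrightarrow i_{\ast}\mathcal{L}\otimes\mathcal{O}_{\bp3}(2,-1)$. The key point is that the $k-l$ lines $C_i$ and $l+1$ fibers $L_j$ defining the scheme $X$ in the t'Hooft construction \eqref{t'hooft-sequence} can be deformed so that they absorb the extra $d_1$ lines of class $H^2$ and $d_2$ lines of class $E^2$ coming from $\Sigma$: set $k'=k+d_1$, $l'=l+d_2$ (so $r=d_1+d_2$ and the new fiber-count increases by $d_2$, consistent with the charge $(k+r)H^2+(l+d_2)E^2$ once one writes $k+r=k+d_1+d_2$ — here I should double-check the indexing against Example \ref{examples}, since the statement writes the first Chern-class-$H^2$ increment as $r$ rather than $d_1$, which is exactly the content of items (i) and (iii) of that Example where each $H^2$-line contributes $1$ to $k$). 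The idea is to build a family $\mathbf{E}$ over a smooth pointed curve $(C,0)$ whose general member is a t'Hooft bundle of charge $k'H^2+l'E^2$ and whose central fiber $\mathbf{E}_0$ degenerates to the (possibly non-t'Hooft, but still $\mu$-stable) bundle $\mathcal{F}^{\vee\vee}=\mathcal{E}$, together with a family of target sheaves $\mathbf{G}$ with $\mathbf{G}_0=i_{\ast}\mathcal{L}\otimes\mathcal{O}_{\bp3}(2,-1)$, apply Lemma \ref{surjective-family} to extend $\phi$ to a family of surjections $\mathbf{s}:\mathbf{E}\twoheadrightarrow\mathbf{G}$ (the hypothesis $\ho^{\geq1}(\mathcal{H}om(E_t,G_t))=0$ is exactly of the type already checked, since $\mathcal{H}om(E_t,G_t)$ is line-supported of nonnegative degree), and take $\mathbf{F}=\ker(\mathbf{s})$. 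By Remark \ref{torsion-free-existence} and Proposition \ref{transform-also-instanton} each $\mathbf{F}_t$ for $t\neq 0$ is a torsion-free instanton of the right charge whose double dual is a t'Hooft bundle — so it lies in $\mathcal{I}^0_{\bp3}(k+r,l+d_2)$ if one arranges $\Sigma$ to sweep out, in the limit, the extra lines; and $\mathbf{F}_0=\mathcal{F}$. This realizes $[\mathcal{F}]$ as a flat limit, i.e. $[\mathcal{F}]\in\partial\mathcal{I}^0_{\bp3}(k+r,l+d_2)$.

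Combining the two halves: $[\mathcal{F}]$ lies in the closure $\overline{\mathcal{I}^0_{\bp3}(k+r,l+d_2)}$ by the Lemma \ref{surjective-family} argument, it is not an interior point (being strictly torsion-free while the generic member of $\mathcal{I}^0$ is locally free), and it is a smooth point of the moduli space by the $\ext^2(\mathcal{F},\mathcal{F})=0$ computation; since a smooth point lies on a unique irreducible component, $[\mathcal{F}]$ lies on the unique component $\overline{\mathcal{I}^0_{\bp3}(k+r,l+d_2)}$ through it, so indeed $[\mathcal{F}]\in\partial\mathcal{I}^0_{\bp3}(k+r,l+d_2)$ and it is smooth there. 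This also yields the assertion in the introduction that such $[\mathcal{F}]$ cannot lie in the intersection of two torsion-free components.

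I expect the main obstacle to be the smoothability half rather than the smoothness half: proving $\ext^2(\mathcal{F},\mathcal{F})=0$ is a bounded, essentially local computation on a union of lines, but constructing the family $\mathbf{E}$ over $C$ whose generic fiber is genuinely a \emph{t'Hooft} bundle of the larger charge — i.e. controlling the Hartshorne--Serre data so that the extra lines from $\Sigma$ merge, in the flat limit, with the configuration $X$ of \eqref{t'hooft-sequence} in a way compatible with the surjection being deformed — requires care: one must check that the generic deformation of the configuration $X\sqcup\Sigma$ (with $\Sigma$ now a union of $d_1$ pulled-back lines off the blow-up point and $d_2$ fibers) is again a disjoint union of such lines, and that the associated instanton bundle is generically trivial, so that it actually lands in $\mathcal{I}^0_{\bp3}(k+r,l+d_2)$; this is where I would lean hardest on \cite[Theorem 1.8]{ccgm} and the explicit nature of Construction 7.2 in \cite{ccgm}.
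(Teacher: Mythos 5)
Your two-part skeleton (smoothness via the local-to-global spectral sequence and Lemma \ref{smooth-elementary}; membership in the boundary via a one-parameter degeneration) matches the paper's, but both halves have real problems, and the smoothability half is broken. The family you describe --- generic fibre a locally free t'Hooft bundle of charge $(k+r)H^{2}+(l+d_{2})E^{2}$, central fibre the bundle $\mathcal{E}$ of charge $kH^{2}+lE^{2}$ --- cannot be flat, since the Chern character would jump; and once you set $\mathbf{F}=\ker(\mathbf{s})$ for a family of surjections onto line-supported sheaves, \emph{every} fibre $\mathbf{F}_{t}$ is strictly torsion-free, so you never exhibit $\mathcal{F}$ as a limit of locally free members of $\mathcal{I}^{0}_{\bp3}(k+r,l+d_{2})$. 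The missing idea is to degenerate the Hartshorne--Serre extension class rather than the bundle: the paper takes the line configuration $X$ of the \emph{larger} charge, the universal extension of $\mathcal{I}_{X}(1,-1)$ by $\mathcal{O}_{\bp3}(-1,1)$ over $\mathbf{V}\cong\mathbb{A}^{k+1}$, and restricts it to the line $\{(t,1,\dots,1)\}$; for $t\neq0$ the extension class is nonzero on every component and $\mathcal{F}_{t}$ is a locally free t'Hooft bundle, while at $t=0$ the class vanishes along one line and $\mathcal{F}_{0}$ is precisely the elementary transform of the t'Hooft bundle attached to the configuration with that line removed. Lemma \ref{surjective-family} is used only in the inductive step on the number of lines, to extend the surjection onto the last line over the family supplied by the induction hypothesis.

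In the smoothness half your strategy is right but the one computation you actually carry out is wrong: for $\Sigma$ a pull-back line of class $H^{2}$ and $\mathcal{L}=\mathcal{O}_{\Sigma}(-1)$ one has $\deg\bigl(\mathcal{L}^{\otimes2}\otimes i^{\ast}\mathcal{O}_{\bp3}(4,-2)\bigr)=-2+4=+2$, not $-2$ (and even a degree $-2$ line bundle on $\mathbb{P}^{1}$ has $h^{1}=1$, so ``all cohomology vanishes'' would be false in any case). What the proof of Lemma \ref{smooth-elementary} actually needs, and what holds here, is only $h^{1}=0$; the paper explicitly uses $h^{0}(i_{\ast}\mathcal{L}^{\otimes2}\otimes\mathcal{O}_{\bp3}(4,-2))=3$ in the final dimension count, so your reading of the hypothesis would make the lemma inapplicable. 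Finally, the assertion that $\ho^{1}(\mathcal{E}xt^{2}(i_{\ast}\mathcal{L}\otimes\mathcal{O}_{\bp3}(2,-1),\mathcal{F}))$ ``should come out zero'' is the actual content of the paper's Claims 1--4: one must compute $\mathcal{F}|_{\Sigma}$ via a Tor sequence (obtaining $\mathcal{O}_{\Sigma}^{\oplus2}\oplus\mathcal{O}_{\Sigma}(-1)$) and track the resulting Ext sheaves to see that $\mathcal{E}xt^{1}(\mathcal{F},\mathcal{F})$ is a direct sum of line bundles on $\Sigma$ of degree $\geq-1$, whence $h^{1}=0$ and $\ext^{2}(\mathcal{F},\mathcal{F})=0$. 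Without that computation, and without a correct degeneration argument, neither the smoothness nor the boundary statement is established.
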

The proof of this proposition is very similar to \cite[Proposition 7.2 and Proposition 7.3]{JMT1}
\begin{proof}
The proof is by induction on the number of lines $r$ forming $\Sigma$

First we set $r=1,$ and consider $\Sigma$ is a pull-back of the a line in $\p3.$ 
Suppose that we fix a disjoint union of lines $$X:=\bigcup_{i=1}^{k-l}C_{i} \cup \bigcup_{j=1}^{l+1}L_{j},$$ as above, then the sheaves $\mathcal{F}$ given by the extension 
\begin{equation}\label{extension-sheaf}
    0\to\mathcal{O}_{\bp3}(-1,1)\to\mathcal{F}\to\mathcal{I}_{X}(1,-1)\to0
\end{equation}
are classified by $\ext^{1}(\mathcal{I}_{X},\mathcal{O}_{\bp3}(-2,2))$ which is easy to show that it is isomorphic to $\mathbf{V}=\bigoplus_{i=1}^{k-l}\ho^{0}(\mathcal{O}_{C_{i}})\oplus \bigoplus_{j=1}^{l+1}\ho^{0}(\mathcal{O}_{L_{j}}).$ Furthermore, any extension can be identified with its coordinates $(x_{1},\cdots,x_{k-l},y_{1},\cdots,y_{l+1}),$ for $x_{i}\in\ho^{0}(\mathcal{O}_{C_{i}})$ and $y_{j}\in\ho^{0}(\mathcal{O}_{L_{j}}).$ By \cite[Proposition 3.1]{HH}, there exists a flat universal extension parametrized by the affine space
$\mathbf{V}=\mathbb{A}^{k+1},$ and by restricting it to a line $\mathbb{A}^{1}=\{(t,1,\cdots,1)|t\in\mathbb{C}\}$ we get a $1$-dimensional family $\mathbf{F}=\{\mathcal{F}_{t}\}_{t\in\mathbb{A}^{1}}.$ Moreover we have $\mathcal{F}_{t}$ is locally free for $t\neq0.$ But for $t=0,$ one can take the double dual $\mathcal{E}_{0}:=\mathcal{F}_{0}^{\vee\vee}$ which is an instanton sheaf in $\mathcal{I}^{0}_{\bp3}(k-1,l),$ this is because $\mathcal{F},$ is not a locally free instantons, hence its singularity locus is purely one dimensional by Theorem \ref{main1}, in addition, Lemma \ref{v-vanish} implies that the dual is a locally free instanton. Hence $\mathcal{E}_{0}:=\mathcal{F}^{\vee\vee}\cong\mathcal{F}^{\vee}.$ Indeed it will fit in the exact sequence 
\begin{equation}\label{extension-sheaf1}
    0\to\mathcal{O}_{\bp3}(-1,1)\to\mathcal{E}_{0}\to\mathcal{I}_{X_{1}}(1,-1)\to0
\end{equation}
where $$X_{1}:=\bigcup_{i=2}^{k-l}C_{i} \cup \bigcup_{j=1}^{l+1}L_{j},$$
In this way we obtain a morphism $\psi:\mathbb{A}^{1}\to\overline{\mathcal{I}^{0}_{\bp3}(k,l)}$ sending $t$ to the extension $[\mathcal{F}_{t}]$ whose limit is $\mathcal{F}_{0}.$ In other words $[\mathcal{F}_{0}]\in\partial\mathcal{I}^{0}_{\bp3}(k,l).$
Finally the extension \eqref{extension-sheaf} for $\mathcal{F}_{0}$ gives an exact sequence 
\begin{equation}
    0\to\mathcal{F}_{0}\to\mathcal{E}_{0}\to\mathcal{O}_{C_{1}}(1{\rm pt})\to0
\end{equation}
which shows that $\mathcal{F}_{0}$ is an elementary transformation, of $\mathcal{E}_{0},$ whose class $[\mathcal{F}_{0}]$ is a point in $\overline{\mathcal{I}^{0}_{\bp3}(k,l)}.$ it smoothness will be proved below with the help of Lemma \ref{smooth-elementary} .

If $\Sigma$ is a fiber of the projection $\bp3\to\p2,$ then by choosing an other line $\mathbb{A}^{1}=\{(1,1,\cdots,s)|s\in\mathbb{C}\}$ we get another $1$-dimensional family of instanton bundles in $\mathcal{I}^{0}_{\bp3}(k,l-1),$ corresponding to a scheme, whose limit is in $\partial \mathcal{I}^{0}_{\bp3}(k,l)$ and is also smooth. and this completes the proof for  $r=1$

For $r\geq2,$ Let $\Sigma=\Sigma'\cup N,$ where $\Sigma$ is of type $(d'_{1},d'_{2}),$ with $d'_{1}+d'_{2}=r-1$and $N,$ a line of type $(0,1)$ or $(1,0)$, as above:  
We set $Q=Q'\oplus\mathcal{O}_{N}(-{\rm pt})$ and $Q'=\bigoplus_{n=1}^{r-1}\mathcal{O}_{N_{i}}(-{\rm pt}).$ Let $\mathcal{E}$ a t'Hooft instanton bundle and $\mathcal{F}$ its transform with respect to $(\Sigma,\mathcal{L},\phi).$ Then we have successive surjections $\mathcal{E}\stackrel{\phi}{\twoheadrightarrow} Q\otimes\mathcal{O}_{\bp3}(2,-1)\twoheadrightarrow Q'\otimes\mathcal{O}_{\bp3}(2,-1)$
and an exact sequence $$0\to\mathcal{F}\to\mathcal{F}'\stackrel{\phi'}{\to}\mathcal{O}_{N}(1{\rm pt})\to0.$$ By the induction hypothesis, the sheaf $\mathcal{F}'$ is in the boundary of $\partial\mathcal{I}^{0}_{\bp3}(k+r-1,l+d'_{2}).$ for some $d_{i}.$ Thus we can find a flat deformation $\mathbf{F},$ on $U\subset\mathbb{A}^{1},$ for some open affine $U$ containing $0,$ for which $\mathbf{F}_{0}=\mathcal{F}'$ and $\mathbf{F}_{t}$ is locally free and generically trivial (by construction of the t'Hooft instanton bundles above), for all $t\neq0.$ If we set $\mathbf{G}:=p^{\ast}Q',$ where $p:\bp3\times U\to\bp3,$ one has $\ho^{i}(\mathcal{H}om(\mathcal{F}_{t},\mathcal{G}_{t}))=\ho^{i}(\mathcal{O}_{N}(1{\rm pt})^{\oplus2})=0,$ for $i\geq1$ and $t\in U.$ Remark that we included $t=0,$ since the sequence $0\to\mathcal{F}'\to\mathcal{E}\to Q'\otimes\mathcal{O}_{\bp3}(2,-1)\to0$ also implies that $\mathcal{F}'|_{N}=\mathcal{E}|_{N}$ whenever the support of $Q'$ is disjoint from $N$. Finally, by Lemma \ref{surjective-family}, there is a epimorphism $\mathbf{s}:\mathbf{F}\twoheadrightarrow\mathbf{G}$ extending $\phi':\mathcal{F}'\to \mathcal{O}_{N}(1{\rm pt}).$ Thus $[\mathbf{s}_{t}]\in\partial\mathcal{I}^{0}_{\bp3}(k+r,l+d_{2})$ and so is $\mathcal{F}.$ 

\vspace{0.3cm}

It only remains to prove the smoothness: We suppose that the curve $\Sigma$ has the same form of the scheme $X,$ used in \eqref{t'hooft-sequence}, that is, the disjoint union of rational conics $C,$ generic enough to not intersect $X,$ obtained as pull backs of lines in $\p3$ that do not contain the blow-up point. The generic choice of these conics is always possible since thei number is finite and all of them are parametrized by an open subset of $\mathbb{G}r(1,3).$  The lines that we denote by $L$ are fibers of the projection map $\bp3\stackrel{pr}{\to}\p2,$ also generic enough to no intersect $X.$ One shall proceed by treating the situations in which $\Sigma$ is has only one component, namely the two cases  $\Sigma=C$ or $\Sigma=L.$ The general case, of disjoint unions, follows easily by induction. We remind that the locally free t'Hooft instantons $\mathcal{E}$ constructed by Hartshorne-Serre correspondence from the scheme $X,$ used in \eqref{t'hooft-sequence}, are generically trivial \cite[Theorem 1.7]{ccgm} Thus, $\mathcal{E}|_{L}$ is trivial. Recall from Corollary \ref{triv-split-shf} that $\mathcal{E}$ has also a trivial generic splitting on the conic $C,$ that is, $\mathcal{E}|_{C}=\mathcal{O}_{C}^{\oplus2}.$ 

First we start with $\Sigma=C;$ the determinant of its normal sheaf is given by 
$$\det(\mathcal{N}_{\Sigma/\bp3})\cong \mathcal{O}_{\Sigma}(2{\rm pt}).$$

Moreover we have a sequence

\begin{equation}\label{elementary-one-line}
    0\to\mathcal{F}\to\mathcal{E}\to i_{\ast}\mathcal{L}\otimes\mathcal{O}_{\bp3}(2,-1)\to0,
\end{equation}
where $\mathcal{L}=\mathcal{O}_{\Sigma}(-{\rm pt})$ and from the fact that $\mathcal{E}$ is an instanton bundle, we see that the conditions of Lemma \ref{smooth-elementary} are satisfied. Moreover $h^{0}(i_{\ast}\mathcal{L}^{\otimes2}\otimes\mathcal{O}_{\bp3}(4,-2))=h^{0}(\Sigma,\mathcal{O}_{\Sigma}(2{\rm pt}))=3.$ Hence, from Lemma  \ref{smooth-elementary}, it follows that $h^{1}(\mathcal{H}om(\mathcal{F},\mathcal{F}))=8k-4l.$
Now, let us apply $\mathcal{H}om(\bullet,\mathcal{E})$ to the sequence \eqref{elementary-one-line}; one has $\mathcal{H}om(i_{\ast}\mathcal{L}\otimes\mathcal{O}_{\bp3}(2,-1),\mathcal{E})=0,$ and consequently $\mathcal{H}om(\mathcal{F},\mathcal{E})\cong \mathcal{H}om(\mathcal{E},\mathcal{E})$ and $\mathcal{E}xt^{1}(i_{\ast}\mathcal{L}\otimes\mathcal{O}_{\bp3}(2,-1),\mathcal{E})\cong \mathcal{E}xt^{1}(i_{\ast}\mathcal{O}_{\Sigma}, \mathcal{O}_{\bp3})\otimes \mathcal{E}\otimes\mathcal{O}_{\bp3}(-2,1)=0,$ since $\Sigma$ is a curve. 

\vspace{0.4cm}
\textbf{Claim 1:} 
$$\mathcal{H}om(\mathcal{F},i_{\ast}\mathcal{L}\otimes\mathcal{O}_{\bp3}(2,-1))=i_{\ast}\mathcal{O}_{\Sigma}(1{\rm pt})^{\oplus2}\oplus i_{\ast}\mathcal{O}_{\Sigma}(2{\rm pt}).$$

\vspace{0.4cm}

First, we have $\mathcal{H}om(\mathcal{F},i_{\ast}\mathcal{L}\otimes\mathcal{O}_{\bp3}(2,-1))\cong i_{\ast}\mathcal{H}om(i^{\ast}\mathcal{F},\mathcal{O}_{\Sigma}(1{\rm pt})).$ On the other hand, if we apply the composite functor $i^{\ast}(\bullet\otimes i_{\ast}\mathcal{O}_{\Sigma})$ to \eqref{elementary-one-line}, we obtain the sequence
\begin{equation*}
    0\to i^{\ast}Tor^{1}(i_{\ast}\mathcal{L}\otimes\mathcal{O}_{\bp3}(2,-1),i_{\ast}\mathcal{O}_{\Sigma},)\to\mathcal{F}|_{\Sigma}\to\mathcal{E}|_{\Sigma}\to\mathcal{O}_{\Sigma}(1{\rm pt})\to0
\end{equation*}

which can be broken into 

\begin{equation}\label{1piece}
    0\to i^{\ast}Tor^{1}(i_{\ast}\mathcal{L}\otimes\mathcal{O}_{\bp3}(2,-1),i_{\ast}\mathcal{O}_{\Sigma})\to\mathcal{F}|_{\Sigma}\to\mathcal{G}\to0
\end{equation}
and

\begin{equation}\label{2piece}
    0\to\mathcal{G}\to\mathcal{O}_{\Sigma}(-1{\rm pt})\oplus\mathcal{O}_{\Sigma}(1{\rm pt})\to\mathcal{O}_{\Sigma}(1{\rm pt})\to0
\end{equation}

In the above sequence, the generic trivial splitting type of $\mathcal{E}$ has been used, {\rm i. e.}, $\mathcal{E}_{\Sigma=C}=\mathcal{O}_{\Sigma}(-1{\rm pt})\oplus\mathcal{O}_{\Sigma}(1{\rm pt}).$  From \eqref{2piece} one can see that $\mathcal{G}=\mathcal{O}_{\Sigma}(-1{\rm pt}).$ Remark that the sheaf $i^{\ast}Tor^{1}(i_{\ast}\mathcal{L}\otimes\mathcal{O}_{\bp3}(2,-1),i_{\ast}\mathcal{O}_{\Sigma})$ is in fact equal to $\mathcal{N}^{\vee}_{\Sigma/\bp3}\otimes\mathcal{O}_{\Sigma}(1{\rm pt})\cong\mathcal{O}_{\Sigma}^{\oplus2}.$ From \eqref{1piece}, we conclude that $\mathcal{F}|_{\Sigma}=\mathcal{O}_{\Sigma}^{\oplus2}\oplus \mathcal{O}_{\Sigma}(-1{\rm pt}),$ and the claim follows.

\vspace{0.4cm}

\textbf{Claim 2:} $\mathcal{E}xt^{1}(\mathcal{F},\mathcal{E})=i_{\ast}\mathcal{O}_{\Sigma}\oplus i_{\ast}\mathcal{O}_{\Sigma}(2{\rm pt})$

\vspace{0.4cm}

By applying the functor $\mathcal{H}om(\bullet,\mathcal{E})$ on \eqref{elementary-one-line}, we get
\begin{align}
\mathcal{E}xt^{1}(\mathcal{F},\mathcal{E})=&\mathcal{E}xt^{2}(i_{\ast}\mathcal{L}\otimes\mathcal{O}_{\bp3}(2,-1),\mathcal{E}) \notag \\ &=\mathcal{E}xt^{2}(i_{\ast}\mathcal{O}_{\Sigma}(-1{\rm pt}),\mathcal{O}_{\bp3})\otimes\mathcal{E}(-2,1) \notag \\
&=\mathcal{E}xt^{2}(i_{\ast}\mathcal{O}_{\Sigma},\mathcal{O}_{\bp3})\otimes i_{\ast}\mathcal{O}_{\Sigma}(1{\rm pt})\otimes\mathcal{E}(-2,1) \notag \\
&=i_{\ast}\mathcal{O}_{\Sigma}\oplus i_{\ast}\mathcal{O}_{\Sigma}(2{\rm pt}) \notag
\end{align}

Where the last equality is, again, obtained by using$$\mathcal{E}xt^{2}(i_{\ast}\mathcal{O}_{\Sigma},\mathcal{O}_{\bp3})\cong i_{\ast}\omega_{\Sigma}\otimes\mathcal{O}_{\bp3}(4,-2)\cong i_{\ast}\mathcal{O}_{\Sigma}(2{\rm pt}),$$ 
and  $\mathcal{E}|_{\Sigma}=\mathcal{O}_{\Sigma}(-1{\rm pt})\oplus\mathcal{O}_{\Sigma}(1{\rm pt}).$ 

\vspace{0.4cm}

\textbf{Claim 3:} There is a long exact sequence 
\begin{align}\label{long-ext-sequence}
    0\to& i_{\ast}\mathcal{L}^{\otimes2}\otimes\mathcal{O}_{\bp3}(4,-2) \to\mathcal{H}om(\mathcal{F},i_{\ast}\mathcal{L}\otimes\mathcal{O}_{\bp3}(2,-1))\to\mathcal{E}xt^{1}(\mathcal{F},\mathcal{F})\\ 
    &\to\mathcal{E}xt^{1}(\mathcal{F},\mathcal{E})\to\mathcal{E}xt^{1}(\mathcal{F},i_{\ast}\mathcal{L}\otimes\mathcal{O}_{\bp3}(2,-1))\to0 \notag
\end{align}

\vspace{0.4cm}

If We apply the functor $\mathcal{H}om(\mathcal{F},\bullet)$ to \eqref{elementary-one-line}, then we get 

\begin{align}
    0\to&\mathcal{H}om(\mathcal{F},\mathcal{F}) \to\mathcal{H}om(\mathcal{F},\mathcal{E})\to\mathcal{H}om(\mathcal{F},i_{\ast}\mathcal{L}\otimes\mathcal{O}_{\bp3}(2,-1))\\ 
    &\to\mathcal{E}xt^{1}(\mathcal{F},\mathcal{F})\to\mathcal{E}xt^{1}(\mathcal{F},\mathcal{E})\to\mathcal{E}xt^{1}(\mathcal{F},i_{\ast}\mathcal{L}\otimes\mathcal{O}_{\bp3}(2,-1))\to0, \notag
\end{align}
since $\mathcal{E}xt^{2}(\mathcal{F},\mathcal{F})=0,$ as we saw in the proof of Lemma \ref{smooth-elementary}. Also, from the proof of Proposition \ref{hom-quotients}, one has $\mathcal{H}om(\mathcal{F},\mathcal{E})\cong\mathcal{H}om(\mathcal{E},\mathcal{E}),$ and the claim follows.

\vspace{0.4cm}

\textbf{Claim 4:} $\mathcal{E}xt^{1}(\mathcal{F},i_{\ast}\mathcal{L}\otimes\mathcal{O}_{\bp3}(2,-1))\cong i_{\ast}\mathcal{O}(2{\rm pt})$

\vspace{0.4cm}

Now, if we apply $\mathcal{H}om(\bullet, i_{\ast}\mathcal{L}\otimes\mathcal{O}_{\bp3}(2,-1))$ on \eqref{elementary-one-line}, we obtain

\begin{align}
    \mathcal{E}xt^{1}(\mathcal{F},i_{\ast}\mathcal{L}\otimes\mathcal{O}_{\bp3}(2,-1))&=\mathcal{E}xt^{2}(i_{\ast}\mathcal{L}, i_{\ast}\mathcal{L}) \\
    &\cong i_{\ast}\omega_{\Sigma}\otimes\mathcal{O}_{\bp3}(4,-2)=i_{\ast}\mathcal{O}(2{\rm pt}). \notag 
\end{align}

Then by using the sequence \eqref{long-ext-sequence} and the proved Claims 1-2 one has:

\begin{align}
    0\to& i_{\ast}\mathcal{O}_{\Sigma}(2{\rm pt}) \to i_{\ast}\mathcal{O}_{\Sigma}(1{\rm pt})^{\oplus2}\oplus i_{\ast}\mathcal{O}_{\Sigma}(2{\rm pt})\to\mathcal{E}xt^{1}(\mathcal{F},\mathcal{F})\\ 
    &\to i_{\ast}\mathcal{O}_{\Sigma}(1{\rm pt})^{\oplus2} \to i_{\ast}\mathcal{O}(2{\rm pt})\to0 \notag
\end{align}

Again, by breaking this long sequence into short exact ones, one obtains the isomorphism $$\mathcal{E}xt^{1}(\mathcal{F},\mathcal{F})\cong i_{\ast}\mathcal{O}_{\Sigma}(1{\rm pt})^{\oplus2}\oplus i_{\ast}\mathcal{O}_{\Sigma},$$ thus $$h^{0}(\mathcal{E}xt^{1}(\mathcal{F},\mathcal{F}))=5, \quad h^{1}(\mathcal{E}xt^{1}(\mathcal{F},\mathcal{F}))=0,$$ Finally from Proposition \ref{smooth-elementary} it follows that ${\rm ext}^{1}(\mathcal{F},\mathcal{F})=5+8k-4l-3+3=8(k+1)-4l-3.$ 

Recall that $\mathcal{I}^{0}_{\bp3}(k+1,l)$ has dimension $8(k+1)-4l-3.$ Moreover $[\mathcal{F}] \in \partial\mathcal{I}^{0}_{\bp3}(k+1,l)$ and $\dim(T_{[\mathcal{F}]}\mathcal{I}^{0}_{\bp3}(k+1,l))={\rm ext}^{1}(\mathcal{F},\mathcal{F})=8(k+1)-4l-3.$ It follows that there is no jump in the dimension of the tangent at $[\mathcal{F}]$ and therefore it is a smooth point in the closure of $\mathcal{I}^{0}_{\bp3}(k+1,l).$ This completes the proof for $\Sigma= C$.

In the case of $\Sigma= L,$ one makes the following changes 

\begin{align}
&\mathcal{N}_{\Sigma/\bp3}=\mathcal{O}_{\Sigma}(1{\rm pt})\oplus\mathcal{O}_{\Sigma}(-1{\rm pt}),\hspace{0,4cm}\det(\mathcal{N}_{\Sigma/\bp3})=\mathcal{O}_{\Sigma},  \notag \\
&\mathcal{E}|_{\Sigma}=\mathcal{O}_{\Sigma}^{\oplus2} ,\hspace{0,2cm} \textnormal{ and } ,\hspace{0,2cm} i_{\ast}\mathcal{L}\otimes\mathcal{O}_{\bp3}(2,-1))=\mathcal{O}_{\Sigma}; \notag
\end{align}

then one can check that $$\mathcal{E}xt^{1}(\mathcal{F},\mathcal{F})\cong i_{\ast}\mathcal{O}_{\Sigma}(1{\rm pt})\oplus i_{\ast}\mathcal{O}_{\Sigma}\oplus\mathcal{O}_{\Sigma}(-1{\rm pt}),$$
hence $$h^{0}(\mathcal{E}xt^{1}(\mathcal{F},\mathcal{F}))=3, \quad h^{1}(\mathcal{E}xt^{1}(\mathcal{F},\mathcal{F}))=0,$$ Finally from Proposition \ref{smooth-elementary} it follows that ${\rm ext}^{1}(\mathcal{F},\mathcal{F})=3+8k-4l-3+1=8k-4l+1=8(k+1)-4(l+1)-3.$


In order to finish the smoothability proof we proceed by induction, as in \cite[\S3.1]{faenzi}. Let $\ft$ be a deformation of a torsion free instanton $\mathcal{F}$ sheaf obtained as an elementary transformation of a locally free t'Hooft instanton
\begin{equation}\label{ft}
0\to\mathcal{F}\to\mathcal{E}\to\qsigma\to0
\end{equation}

where 
\begin{equation}
\qsigma=\left\{\begin{array}{cc} \mathcal{O}_{L} & \Sigma=L \\ \mathcal{O}_{C}(1{\rm pt}) & \Sigma=C\end{array}\right.
\end{equation}

If $\ft$ is not locally free, then it fits in the short exact sequence: 

\begin{equation}\label{double-d}
0\to\ft\to\ft^{\vee\vee}\to\calq\to0.
\end{equation}

where the support of the quotient sheaf $\calq$ has codimension $\geq2.$ 
First, we want to determine possible sheaves $\calq$ that can appear in \eqref{double-d} by computing its Chern classes. Clearly the class $c_{1}(\calq)$ is zero. The second Chern class $c_{2}(\calq)$ can be written as $s\cdot H^{2}+ t\cdot E^{2},$ with $ s,t\leq0$ as it is minus the class of the support ${\rm Supp}(\calq),$ of $\calq,$ in the Chow ring. Indeed, ${\rm Supp}(\calq)$ is an effective $1$-dimensional cycle, in $\bp3,$ and its second Chern class is twist invariant by line bundles $\mathcal{O}_{\bp3}(p,q).$  

Notice that the Chern class $c_{3}(\ft^{\vee\vee})=m\cdot H^{3},$ for $m\geq0$ is also invariant by twist, since $\ft^{\vee\vee}$ is reflexive. It also turns out that this class is zero as we apply the Hirzebruch-Riemann-Roch Theorem \eqref{euler}; indeed one has 
\begin{equation}   
\chi(\calq(p,q))=\frac{m}{2}-s\cdot(p+2)-t\cdot(q-1)
\end{equation}

By semi-continuity we have that $\ho^{i}(\ft(-2,1))=0,$ for $0\leq i\leq3.$ Moreover, the vector space $\ho^{0}(\ft^{\vee\vee}(-2,1))$ is trivial, since the double dual $\ft^{\vee\vee}$ of the $\mu_{\mathcal{L}}$semi-stable sheaf $\calf$ is also $\mu_{\mathcal{L}}$semi-stable. Hence, by choosing $p=-2$ and $q=1,$ we have that $\chi(\calq(-2,1))=\frac{m}{2}=-h^{1}(\calq(-2,1))\leq0.$ But $m\geq0,$ thus is should be zero and it follows that $\ft^{\vee\vee}$ is locally free, by {\rm item (ii)} of Lemma \ref{intg-condition}. Furthermore, $\ft^{\vee\vee}$ is a t'Hooft instanton.

We recall the reader of the following 
\begin{theorem}(Fujita's vanishing theorem\cite[\S 1.4.D]{lazar})\label{fujita}
Let $X$ be a complex projective scheme and let $D$ be an ample (integral) divisor on $X.$ Given any coherent sheaf $\mathcal{G}$ on $X,$ there exists an integer $d(D,\mathcal{G})$ such that $$\ho^{i}(X,\mathcal{G}\otimes\mathcal{O}_{X}(d\cdot D + D'))=0, \quad\forall i>0, d\geq d(D,\mathcal{G})$$ and for any nef divisor $D'.$
\end{theorem}
In fact, the integer $d(D,\mathcal{G})$ is independent of the nef divisor $D'.$ 

\begin{corollary} \label{fujita-rk-2}
Let $\mathcal{G}$ be a rank $2$ locally free sheaf with $c_{1}(\mathcal{G})=0$ on a fano variety $X$ of index $i_{X}$ and $h:\frac{-K_{X}}{i_{X}}.$ Then $$\ho^{i}(X,\mathcal{G}\otimes\mathcal{O}_{X}(-(d+i_{X})\cdot h - D'))=0, \quad\forall i<\dim X, d\geq d(D,\mathcal{G})$$ and for any nef divisor $D'.$
\end{corollary}
\begin{proof} Since $\mathcal{G}$ is a rank $2$ bundle with trivial first Chern class, then $\mathcal{G}\cong\mathcal{G}^{\vee}.$ Thus, the corollary follows by applying Serre duality and Fujita's vanishing theorem
\end{proof}
In our case, $i_{X}=2,$ $h=2H-E,$ and by Bertini's theorem the generic element of the linear system $|h=2H-E|$ is integral. A divisor $D'=a\cdot H+b\cdot E$ is nef if, and only if $a\geq0$ and $a+b\geq0.$

Let us choose the twist by $\mathcal{O}_{\bp3}(-(d+2)\cdot(2H-E)+(a\cdot H+b\cdot E))=\mathcal{O}_{\bp3}(-2(d+2)-a,(d+2)-b),$ which is of the form $\mathcal{O}_{X}(-(d+i_{X})\cdot h - D').$ Thus, by Corollary \eqref{fujita-rk-2} we shall have $\ho^{i}(\mathcal{E}(-2(d+2)-a,(d+2)-b))=0,$ and $\ho^{i}(\ft^{\vee\vee}(-2d,d))=0,$ $\forall i<3,$ $d\geq d(2H-E,\ft^{\vee\vee}), d(2H-E,\mathcal{E})$ and for any nef divisor $D'=a\cdot H+b\cdot E,$ for the instanton bundle $\mathcal{E}$ and the locally free sheaf $\ft^{\vee\vee}$ in sequences  \eqref{ft} and \eqref{double-d}, respectively. On the other hand one has  $h^{2}(\ft(-2(d+2)-a,(d+2)-b))\leq h^{2}(\mathcal{F}(-2(d+2)-a,(d+2)-b))=h^{1}(\qsigma(-2(d+2)-a,(d+2)-b)),$ by semi-continuity. Consequently, we obtain the inequalities

\begin{equation} \label{bound-s}  
\begin{array}{cc} s\cdot(+2)+t\cdot(q+1)\leq -(p+2)& \textnormal{ for } \Sigma=C; \\ 
s\cdot(p+2)+t\cdot(q+1)\leq -(p+q+1) &  \textnormal{ for } \Sigma=L. \end{array}
\end{equation}
for $p=-2(d+2)-a, q=(d+2)-b.$

The fact that the support of $\calq$ is effective means that $s,t\leq0.$ For the first case $ \Sigma=C$ the inequality \eqref{bound-s} returns $$s\geq-1+t\frac{d-b+3}{2d+a+2}$$
and by choosing $d=b$ and $d>>0,$ one obtains $s=-1$ or $s=0.$ 
\begin{itemize}
\item[(i)] For $s=0,$ inequality \eqref{bound-s} gives $$t\geq t\frac{d+3}{-b} + \frac{2d+a+2}{-b},$$ so by taking $b>>d$
and $d>>0$ we get $t\geq0.$ Hence $t=0.$ In this case the support of $\calq$ is empty and $\ft$ is locally free.

\item[(ii)] For $s=-1,$ inequality \eqref{bound-s} reads $$t\cdot(d-b+3)\leq0$$ and by taking $b>>d,$ it follows that $t=0,$ hence $\calq=\mathcal{O}_{C}(1).$ To conclude the proof in this case, we shall show that one cannot fill a whole component with such a deformation; this follows by a dimensional argument. Indeed, one has to pick $\ft^{\vee\vee}$ in a moduli of $(k,l)$-instantons and this corresponds to $8k-4l-3,$ besides the choice of the support $C$ that corresponds to a point in an open subset of one of the two components of the Hilbert scheme of lines. This component is the grassmannian of lines in $\p3,$  which is a $4$-dimensional family. In addition, we have to chose a surjection $\ft^{\vee\vee}\to\mathcal{O}_{C}(1)$ in  $$\phom(\ft^{\vee\vee},\mathcal{O}_{C}(1))=\ph^{0}(\ft^{\vee\vee}\otimes\mathcal{O}_{C}(1)).$$ By applying Corollary \ref{triv-split-bdle}, we have that $dim\ph^{0}(\ft^{\vee\vee}\otimes\mathcal{O}_{C}(1))=3.$ Summing up, the deformation $\ft$ sits into a family of dimension $(8k-4l-3)+4+3=8k-4l+4\leq8(k+1)-4l-3=\dim T_{[\mathcal{F}]}\mathcal{I}^{0}_{\bp3}(k+1,l).$ Therefore, apart from such deformations $\ft,$ $\mathcal{F}$ is deformable to a locally free $(k+1,l)$instanton, hence smoothable.

\end{itemize}

For the  case $ \Sigma=L$ the inequality \eqref{bound-s} returns $$s\geq t\frac{d-b+3}{2d+a+2}-\frac{d+a+b+1}{2d+a+2}$$
and by choosing $a>>d$ and $d>>0,$ we obtain $s=-1$ or $s=0,$ again. 
\begin{itemize}
\item[(i)] For $s=-1,$ inequality \eqref{bound-s} gives $$ \left\{\begin{array}{cc}t\geq\frac{b-d-1}{d-b+3}; & b>d, d>>0 \\ & \\ t\leq\frac{b-d-1}{d-b+3}; & d>b, b>>0.\end{array}\right.\Longrightarrow\left\{\begin{array}{cc}t\geq-1; & b>d, \\ & \\ t\leq-1 & d>b.\end{array}\right.$$ Hence $t=-1$and it follows that $\calq=\mathcal{O}_{L}.$ In this case, one has to select a  $(k,l)$-instantons $\ft^{\vee\vee}$ and the support $L$ inside one of the two component of Hilbert scheme of lines in $\bp3,$ both of which are $2$-dimensional. The other component parametrizes lines $l_{E}$ in the exceptional divisor \cite[Remark 6.1]{ccgm}. In addition, we choose a surjection $\ft^{\vee\vee}\to\mathcal{O}_{L}$ in  $\phom(\ft^{\vee\vee},\mathcal{O}_{L})$ By applying the general trivial splitting of t'Hooft instantons on lines $L$ we get $\dim\ph^{0}(\ft^{\vee\vee}\otimes\mathcal{O}_{L})=1.$ Summing up, the deformation $\ft$ sits into a family of dimension $(8k-4l-3)+2+1=8k-4l\leq8(k+1)-4l-3=\dim(T_{[\mathcal{F}]}\mathcal{I}^{0}_{\bp3}(k+1,l).$ Again, aside from such deformations $\ft,$ $\mathcal{F}$ is deformable to a locally free $(k+1,l)$instanton. Thus, also smoothable in this case.

\item[(ii)] For $s=0,$ inequality \eqref{bound-s} reads $$ \left\{\begin{array}{cc}t\geq\frac{d+a+b+1}{d-b+3}; & b>d, d>>0 \\ & \\ t\leq\frac{d+a+b+1}{d-b+3}; & d>b, b>>0.\end{array}\right.\Longrightarrow\left\{\begin{array}{cc}t\geq1; & b>d, \\ & \\ t\leq1 & d>b.\end{array}\right.$$ Hence $t=1.$ Since the support of $\calq$ is effective, it follows that it is empty in this case, hence  $\ft$ is again locally free.

\end{itemize}

\end{proof}

By combining smoothability with Corollary \ref{stable-existence} we get the following 

\begin{corollary}
There exist $\mu_{L}$-stable rank $2$ instanton bundles of any charge $kH^{2}+lE^{2}$ .
\end{corollary}

\vspace{0.2cm}

{\small

Abdelmoubine Amar Henni   

Departamento de Matem\'atica MTM - UFSC

Campus Universit\'ario Trindade CEP 88.040-900 Florian\'opolis-SC, Brazil

e-mail: henni.amar@ufsc.br   

}

\end{document}